\let\Bbb\mathbb
\numberwithin{equation}{section}
\theoremstyle{plain}
\newtheorem{theorem}{Theorem}[section]
\newtheorem{lemma}[theorem]{Lemma}
\newtheorem{proposition}[theorem]{Proposition}
\newtheorem{corollary}[theorem]{Corollary}
\theoremstyle{definition}
\newtheorem{definition}[theorem]{Definition}
\newtheorem{example}[theorem]{Example}
\newtheorem{remark}[theorem]{Remark}
\newtheorem{question}[theorem]{Question}
\newtheorem{thmintro}{Theorem}
\DeclareMathOperator\Aut{Aut}
\DeclareMathOperator\Ext{Ext} 
\DeclareMathOperator\GKdim{GKdim}
\DeclareMathOperator\gldim{gldim} 
\DeclareMathOperator\gr{gr}
\DeclareMathOperator\hdet{hdet} 
\DeclareMathOperator\id{id}
\DeclareMathOperator\lcm{lcm}
\DeclareMathOperator\Mod{Mod}
\DeclareMathOperator\op{op}
\DeclareMathOperator\ord{ord}
\DeclareMathOperator\Oz{Oz}
\DeclareMathOperator\supp{supp}
\DeclareMathOperator\tr{tr}
\DeclareMathOperator\Tr{Tr}
\newcommand{\Autgr}{\Aut_{\operatorname{gr}}}
\newcommand\inv{^{-1}}
\newcommand\iso{\cong}
\newcommand\kk{\mathbbm{k}}
\newcommand\bp{\mathbf p}
\newcommand\bq{\mathbf q}
\newcommand\bx{\mathbf x}
\newcommand\bw{\mathbf w}
\newcommand\ou{\overline u}
\newcommand\ov{\overline v}
\newcommand\ow{\overline w}
\newcommand\ox{\overline x}
\newcommand\oy{\overline y} 
\newcommand\oz{\overline z}
\newcommand\NN{\mathbb N}
\newcommand\ZZ{\mathbb Z}
\newcommand\cF{\mathcal F}
\newcommand\degf{d}
\newcommand\ordq{n}
\renewcommand\mod{~\mathrm{mod}~}
\newcommand\restrict[1]{\raisebox{-.3ex}{$|$}_{#1}}
\begin{document}

\title{A family of algebras with trivial ozone group}

\author[Gaddis]{Jason Gaddis}
\address{Miami University, Department of Mathematics, Oxford, Ohio 45056} 
\email{gaddisj@miamioh.edu}

\author[Yee]{Daniel Yee}
\address{University of Wisconsin-Platteville, Mathematics Department, Platteville, WI 53818}
\email{yeed@uwplatt.edu}

\subjclass{
16S36,   	
16S38,   	
16W22,   	
16E65   	
}
\keywords{Ozone group, Calabi--Yau algebras, Ore extensions, noncommutative invariant theory, Artin--Schelter regular}

\begin{abstract}
We study a family of Calabi--Yau algebras that include the quadratic Artin--Schelter regular algebras associated to a nodal cubic. It is shown that these algebras have trivial ozone group, that is, the identity is the only automorphism that fixes the center pointwise. The graded members of this family of algebras are shown to be rigid in the sense that the invariant ring under a nontrivial group of graded automorphisms is not Artin--Schelter regular.
\end{abstract}

\maketitle

\section{Introduction}
Throughout, $\kk$ is an algebraically closed field of characteristic zero. In this paper, we study the following family of algebras:

\begin{definition}\label{defn.Bq}
For $q \in \kk^\times$ and $f \in \kk[t]$, define
\begin{align}\label{eq.Bq}
B_q(f) = \kk\langle u,v,w \mid 
uv-qvu,
wu-quw-f(v),
wv-q\inv vw - f(u) \rangle.
\end{align}
\end{definition}
\noindent Several subfamilies of the $B_q(f)$ are well-known.
\begin{itemize}
    \item The algebra $B_1(1)$ is isomorphic to $A_1(\kk)[u]$, a polynomial extension of the first Weyl algebra. See Example \ref{ex.weyl}.

    \item The algebra $B_1(t)$ is the enveloping algebra of a three-dimensional solvable Lie algebra. 

    \item The algebras $B_q(0)$ are skew polynomial rings (aka, quantum affine spaces). It is well-known that these algebras are Artin--Schelter regular of global dimension three and they are Calabi--Yau.

    \item The quadratic algebras $B_q(t^2)$ are also Artin--Schelter regular of global dimension three. In this case, the point scheme corresponding to $B_q(t^2)$ is a nodal cubic curve \cite{NVZ}.
\end{itemize}

We are wondering if the entire family of algebras $B_q(f)$ have similar properties to the known algebras above. One of our first results is to prove the following.

\begin{thmintro}[Theorems \ref{thm.cy} and \ref{thm.gldim}]
\label{thm.intro-cy}
The algebra $B_q(f)$ is Calabi--Yau for any $q \in \kk^\times$ and any $f \in \kk[t]$. If $q=1$ or $q$ is a nonroot of unity, then $\gldim  B_q(1)=2$. For all other choices of $q$ and $f$, $\gldim B_q(f)=3$.
\end{thmintro}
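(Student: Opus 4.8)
The plan is to exploit the fact that $B_q(f)$ is an iterated Ore extension. Concretely, $B_q(f)\iso \kk[v][u;\tau][w;\sigma,\delta]$, where $\tau$ is the automorphism of $\kk[v]$ with $\tau(v)=qv$; $\sigma$ is the automorphism of the quantum plane $\kk_q[u,v]:=\kk[v][u;\tau]$ with $\sigma(u)=qu$ and $\sigma(v)=q\inv v$; and $\delta$ is the $\sigma$-derivation of $\kk_q[u,v]$ with $\delta(u)=f(v)$ and $\delta(v)=f(u)$. The only verification needed is that $\sigma$ preserves and $\delta$ kills the relation $uv-qvu$; for $\delta$ this is the one-line computation $\delta(uv-qvu)=qu\,f(u)+f(v)\,v-v\,f(v)-q\,f(u)\,u=0$, using that $u$ commutes with $f(u)$ and $v$ with $f(v)$. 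Standard Ore-extension theory then furnishes a PBW basis $\{v^a u^b w^c\}$, so $B_q(f)$ is a Noetherian domain with $\GKdim B_q(f)=3$, and $\gldim B_q(f)\le 3$ (each Ore extension raises the global dimension by at most one, since the twisting maps are automorphisms).

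For the Calabi--Yau assertion I would invoke the behaviour of the Nakayama automorphism under Ore extensions: if $A$ is skew Calabi--Yau with Nakayama automorphism $\nu_A$, then $A[x;\sigma,\delta]$ is skew Calabi--Yau of one higher dimension, with a Nakayama automorphism that restricts to $\sigma\inv\circ\nu_A$ on $A$ and sends $x$ to $\hdet_A(\sigma)\,x$ plus terms of lower filtration degree. Running this through the tower: $\kk[v]$ is Calabi--Yau with trivial Nakayama automorphism; the first extension shows $\kk_q[u,v]$ is skew Calabi--Yau with Nakayama automorphism $\nu_0\colon u\mapsto qu,\ v\mapsto q\inv v$; and the crucial observation is that the automorphism $\sigma$ appearing in the second extension \emph{is} $\nu_0$, so that $\sigma\inv\circ\nu_0=\id$ on $\kk_q[u,v]$, while $\hdet(\sigma)=q\cdot q\inv=1$. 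Thus the Nakayama automorphism of $B_q(f)$ is the identity on $u$ and $v$ and is $w\mapsto w$ plus lower-order terms; one checks these vanish (they are visibly absent when $f=0$, the quantum affine $3$-space, and the general case reduces to this), so $B_q(f)$ is Calabi--Yau. (Alternatively, present $B_q(f)$ by a cyclically invariant superpotential extending the one for $B_q(0)$ by terms built from an antiderivative of $f$, and verify that the associated Koszul-type bimodule complex is a self-dual resolution by filtering down to the case $f=0$.)

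It remains to compute $\gldim B_q(f)$ exactly, which I would do case by case. When $f=0$ or $f=t^2$ the algebra is connected graded and Artin--Schelter regular of global dimension $3$, so $\gldim=3$. When $f$ is a nonzero constant---after rescaling the generators we may take $f=1$---rewrite $B_q(1)$ as one more Ore extension, now over the quantum Weyl algebra: $B_q(1)\iso A_1^q[v;\theta,\delta_0]$ with $A_1^q=\kk\langle u,w\mid wu-quw-1\rangle$, $\theta(u)=q\inv u$, $\theta(w)=qw$, $\delta_0(u)=0$, $\delta_0(w)=-q$ (for $q=1$ this degenerates to $A_1(\kk)[z]$ with $z=u-v$ central). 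Since $\gldim A_1^q=1$ when $q=1$ or $q$ is a nonroot of unity, this gives $\gldim B_q(1)\le 2$; and $\gldim B_q(1)\ge 2$ because $B_q(1)$ is Calabi--Yau of dimension $3$, so $3=\operatorname{pd}_{(B_q(1))^e}B_q(1)\le 2\,\gldim B_q(1)$. Hence $\gldim B_q(1)=2$ in those cases, while if $q\ne 1$ is a root of unity then $\gldim A_1^q=2$ and one argues $\gldim B_q(1)=3$. Finally, for nonconstant $f$ one produces a $B_q(f)$-module of projective dimension $3$: when $f(0)=0$ the one-dimensional module on which $u$ and $v$ act by $0$ works, and when $f(0)\ne 0$ (and $f$ nonconstant) a suitably chosen induced module does; in either case $\gldim B_q(f)=3$.

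The iterated Ore-extension structure makes the Calabi--Yau property and the bound $\gldim\le 3$ essentially formal. The real work is in locating the exact threshold in the global-dimension statement. In the ``$\gldim=2$'' direction the subtlety is that for $q\ne 1$ there is no central variable to split off, forcing one through the quantum Weyl algebra $A_1^q$ and through the identity $\operatorname{pd}_{(B_q(1))^e}B_q(1)=3$ to rule out a further collapse to global dimension $1$. In the ``$\gldim=3$'' direction, the main obstacle is exhibiting a module of projective dimension $3$ when $f$ is nonconstant with $f(0)\ne 0$ (or $q$ is a root of unity), since the obvious one-dimensional modules are then unavailable; this---rather than the homological algebra of the Calabi--Yau property---is where I would expect the argument to demand the most care.
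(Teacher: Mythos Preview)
Your Calabi--Yau argument via the Nakayama automorphism of an Ore extension is a genuinely different route from the paper's, which instead writes $B_q(f)$ as the derivation-quotient algebra of an explicit nonhomogeneous superpotential, notes that the homogeneous part (giving $B_q(0)$ or $B_q(t^d)$) is already CY, and invokes Berger--Taillefer's theorem on PBW deformations of Calabi--Yau algebras. Your approach is sound in outline, but ``one checks these vanish'' for the lower-order term in $\nu(w)$ is not automatic---the known formulas for the Nakayama automorphism of $A[x;\sigma,\delta]$ do carry a $\delta$-dependent correction---and your reduction of the general case to $f=0$ is itself essentially the Berger--Taillefer argument (as is your parenthetical alternative). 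On the other side, for $\gldim B_q(1)=2$ when $q$ is a nonroot of unity, your route through the quantum Weyl algebra $A_1^q$ (using $\gldim A_1^q=1$ and the bound $\operatorname{pd}_{B^e}B\le 2\gldim B$) is arguably cleaner than the paper's, which quotes a Bell--Smith classification to show every finite-dimensional simple $B_q(1)$-module is one-dimensional and then rules those out directly.

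The real gap is in your $\gldim=3$ argument for nonconstant $f$ with $f(0)\neq 0$, which you flag as the hard case requiring an ``induced module.'' In fact one-dimensional modules are still available there: since $\kk$ is algebraically closed, a nonconstant $f$ has a root $\alpha$; when $\alpha\neq 0$ and $q\neq 1$, the assignment $u\mapsto 0$, $v\mapsto\alpha$, $w\mapsto f(0)/\bigl((1-q\inv)\alpha\bigr)$ defines a one-dimensional $B_q(f)$-module, and this is precisely what the paper uses. For $q=1$ the paper first applies the shift $u\mapsto u-\alpha$, $v\mapsto v-\alpha$ to arrange $f(0)=0$; for $f=1$ with $q$ a primitive $n$th root of unity ($n>1$), the finite-dimensional quotient $B_q(1)/(u^n,v^n,w^n)$ does the job. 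In every case one then invokes Berger--Taillefer's observation that a Calabi--Yau algebra of dimension $d$ admitting a nonzero finite-dimensional module has $\gldim=d$---which is also the mechanism implicit in your ``module of projective dimension $3$'' phrasing. So the case you singled out as delicate is actually handled by the same elementary device as the others; no induced-module construction is needed.
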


When $q$ is a primitive $\ordq$-root of unity, $\ordq \neq 1,3$, then the $B_q(t^2)$ are finite modules over their center, so they satisfy a polynomial identity, or are PI. These PI algebras also appear in Itaba and Mori's classification of quantum projective planes finite over their centers \cite{IM}. It was noted in \cite{CGWZ3} that the PI $B_q(t^2)$ possesses a property stronger than that of being Calabi--Yau and this is the driving idea behind the current investigation.

\begin{definition}\label{defn.ozone}
The \emph{ozone group} of an algebra $A$, denoted $\Oz(A)$, is the subgroup of automorphisms $\phi$ of $A$ such that $\phi(z)=z$ for all $z \in Z(A)$.
\end{definition}

The ozone group necessarily contains the Nakayama automorphism of $A$, hence $\Oz(A)=\{\id_A\}$ implies that $A$ is Calabi--Yau, but the converse is not true. If $q$ is a primitive $\ordq$-root of unity, $q \neq 1$, such that $3 \nmid \ordq$, then $B_q(t^2)$ has trivial ozone group. The only other three-dimensional quadratic Artin--Schelter regular algebras satisfying this condition are certain PI Sklyanin algebras (see \cite[Theorem 0.6]{CGWZ3}). Here we prove this more generally for the $B_q(f)$ satisfying certain conditions.

\begin{thmintro}[Theorem \ref{thm.trivial}]
\label{thm.intro-ozone}
Suppose $f \neq 0$ and let $q$ be a primitive $\ordq$-root of unity such that $n>1$ and $\ordq\nmid(j+1)$ for all $j\in\supp(f)$. Then $\Oz(B_q(f))$ is trivial.
\end{thmintro}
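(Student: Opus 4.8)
The plan is to locate enough of the center explicitly and then show that fixing it forces any automorphism to be the identity. Write $A=B_q(f)$. Since $A$ is the iterated Ore extension $\kk[v][u;\sigma][w;\tau,\delta]$, it is a noetherian domain with PBW basis $\{v^au^bw^c\}$, and filtering by $\deg u=\deg v=1$, $\deg w=N$ with $N\gg 0$ identifies $\gr A$ with the quantum affine $3$-space $B_q(0)$, whose relations are $uv-qvu$, $wu-quw$, $wv-q\inv vw$. As $q$ is a primitive $\ordq$-th root of unity, $B_q(0)$ is module-finite over its center $\kk[u^{\ordq},v^{\ordq},w^{\ordq},uvw]$, hence so is $A$; in particular $A$ is prime affine PI. Set $Z=Z(A)$. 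Writing $f=\sum_j f_jt^j$, a direct computation shows that
\[
\Omega \;=\; uvw \;+\;\sum_{j\in\supp(f)}\frac{q\,f_j}{1-q^{\,j+1}}\,u^{\,j+1}\;+\;\sum_{j\in\supp(f)}\frac{q\inv f_j}{1-q^{-(j+1)}}\,v^{\,j+1}
\]
is central in $A$; the denominators $1-q^{\pm(j+1)}$ are nonzero precisely because $\ordq\nmid(j+1)$ for every $j\in\supp(f)$, and this is exactly where the hypothesis is used.

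Next I would lift $u^{\ordq}$, $v^{\ordq}$, $w^{\ordq}$ to central elements $\Omega_u,\Omega_v,\Omega_w$ of $A$ with those leading terms, building the lower-order corrections one filtration degree at a time; this yields $\gr Z=Z(B_q(0))$. Now let $\phi\in\Oz(A)$. Then $\gr\phi$ is an automorphism of $B_q(0)$ fixing $Z(B_q(0))$ pointwise, and the automorphisms of the quantum $3$-space that fix $u^{\ordq},v^{\ordq},w^{\ordq},uvw$ are exactly the diagonal ones $u\mapsto\lambda u$, $v\mapsto\mu v$, $w\mapsto\nu w$ with $\lambda^{\ordq}=\mu^{\ordq}=\nu^{\ordq}=1$ and $\lambda\mu\nu=1$. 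Hence $\gr\phi$ is of this form.

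Lifting back, $\phi(u)=\lambda u+c_u$ and $\phi(v)=\mu v+c_v$ with $c_u,c_v\in\kk$, while $\phi(w)=\nu w+g$ with $g$ in the subalgebra generated by $u$ and $v$. Applying $\phi$ to $uv-qvu$ and using $q\ne 1$ forces $c_u=c_v=0$, so $\phi(u)=\lambda u$, $\phi(v)=\mu v$. Applying $\phi$ to $wu-quw-f(v)$ and to $wv-q\inv vw-f(u)$, and in each case separating the contribution lying in $\kk[v]$ (resp.\ $\kk[u]$) from the rest, gives $\mu^j=\lambda\nu$ and $\lambda^j=\mu\nu$ for every $j\in\supp(f)$ and confines $g$ to monomials $v^au^b$ with $\ordq\mid a+1$ and $\ordq\mid b+1$. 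Finally, applying $\phi$ to $\Omega$: the coefficient of $uvw$ recovers $\lambda\mu\nu=1$, the coefficients of $u^{\,j+1}$ and $v^{\,j+1}$ give $\lambda^{\,j+1}=\mu^{\,j+1}=1$ for all $j\in\supp(f)$, and the remaining terms force $g=0$, i.e.\ $\phi(w)=\nu w$. Combining these relations with the constraints obtained by applying $\phi$ to $\Omega_u,\Omega_v,\Omega_w$, the arithmetic hypothesis on $\ordq$ and $\supp(f)$ leaves only $\lambda=\mu=\nu=1$, so $\phi=\id$.

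The main obstacle is the first step: showing that the advertised central elements exist and, more delicately, pinning down which lower-order monomials can appear in $\Omega$ (and in $\Omega_u,\Omega_v,\Omega_w$), since it is those lower-order terms — not merely the leading terms — that supply the constraints needed to collapse $\lambda,\mu,\nu$ to $1$ at the end. A secondary point requiring care is the identification $\gr Z=Z(B_q(0))$ together with the classification of automorphisms of the quantum $3$-space fixing its center, which is what licenses the reduction to the diagonal case.
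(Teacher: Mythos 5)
Your overall strategy---produce enough explicit central elements and then show that fixing them pins the automorphism down---is the same as the paper's, and your $\Omega$ is (up to the precise coefficients, which you should recheck against \eqref{eq.Omega}) exactly the central element the paper constructs via Lemma \ref{lem.inner_deriv} and Proposition \ref{prop.cnt}. The genuine gap is the step you yourself flag as ``the main obstacle'': the existence of a central lift $\Omega_w$ of $w^{\ordq}$ and the identification $\gr Z(A)=Z(B_q(0))$. Neither is established, and the paper shows this is genuinely delicate: the Remark following Lemma \ref{lem.wpwr} exhibits $\ordq=4$, $f=t^2$, where $w^4$ is \emph{not} central (a central element with that leading term exists there only after the nontrivial correction $2(1-q)\Omega w$), and the paper explicitly declines to compute the full center. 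This is not a cosmetic omission, because your endgame needs $\Omega_w$. The constraints you actually derive from $u^{\ordq}$, $v^{\ordq}$, $\Omega$ and the defining relations---namely $\lambda^{\ordq}=\mu^{\ordq}=1$, $\lambda\mu\nu=1$, $\lambda^{j+1}=\mu^{j+1}=1$, $\mu^{j}=\lambda\nu$, $\lambda^{j}=\mu\nu$ for $j\in\supp(f)$, and $g=0$---do not force $\lambda=\mu=\nu=1$ in general: for $\ordq=4$ and $f=t$ the assignment $\lambda=\mu=-1$, $\nu=1$ satisfies every one of them. So the ``constraints obtained by applying $\phi$ to $\Omega_u,\Omega_v,\Omega_w$'' in your last sentence are carrying real weight, and they rest on objects you have not produced.

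Two secondary points. First, $\gr\phi$ is only defined once you know $\phi$ preserves the filtration; bounding $\deg\phi(w)$ already requires the degree argument with $\Omega$, so the reduction to $B_q(0)$ cannot come first. The paper instead runs the degree argument directly in $A$: Lemma \ref{lem.oz1} shows $\phi(u),\phi(v)\in\kk_q[u,v]$, then linearity of automorphisms of the quantum plane for $q\neq 1$ takes over, with no appeal to $\gr Z$ or to a central lift of $w^{\ordq}$. Second, ``$B_q(0)$ is module-finite over its center, hence so is $A$'' does not follow as stated: module-finiteness over the center does not pass from $\gr A$ to $A$ unless you already know $\gr Z(A)$ is large, which is precisely the point in question. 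The paper proves the PI and module-finiteness statements separately in Proposition \ref{prop.PI} via maximal-order arguments, and in any case does not need them for the ozone computation.
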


As $\NN$-graded algebras with $d\geq2$, $B_q(t^d)$ are Artin--Schelter regular. We show that they are rigid in the sense that no invariant ring is again Artin--Schelter regular. Under special conditions, the invariant ring may be Artin--Schelter Gorenstein.

\begin{thmintro}[Theorem \ref{thm.gorenstein}]
\label{thm.intro-invariant}
Fix $q \neq \pm 1$ and $d \geq 2$. Set $B=B_q(t^d)$. 
\begin{enumerate}
    \item If $H$ is a nontrivial finite subgroup of the graded automorphism group $\Autgr(B)$, then $B^H$ is not Artin--Schelter regular.
    \item If $H$ is a finite subgroup of $\{ \phi_{a,\xi,h} : a^{d+1}=1\}$, then $B^H$ is Artin--Schelter Gorenstein. (See \eqref{eq.phi_aut} for the definition of $\phi_{a,\xi,h}$.)
\end{enumerate}
\end{thmintro}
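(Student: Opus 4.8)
The plan is to analyze $\Autgr(B)$ for $B = B_q(t^d)$, then apply the standard homological-determinant machinery. First I would work out the graded automorphism group. Since $B$ is generated in degree one by $u,v,w$, an automorphism is a linear substitution on $\mathrm{span}(u,v,w)$ preserving the defining relations. From the relation $uv-qvu$ and $q\neq\pm1$, the weight data force $u,v$ to be scaled (up to swapping the two, which the asymmetric relations $wu-quw-u^d$, $wv-q^{-1}vw-v^d$ rule out once $d\geq 2$), and $w$ to be scaled plus possibly absorb lower-degree corrections; I expect $\Autgr(B) = \{\phi_{a,\xi,h}\}$ where $u\mapsto \xi u$, $v \mapsto \xi^{-1} v$ (or a closely related normalization), $w \mapsto a w + (\text{correction})$, with the relations forcing $\xi^{d+1}$, $a$, etc. to satisfy explicit constraints; the parameter named $a$ in the statement is the scalar by which $w$ is scaled, and $a^{d+1}=1$ will be exactly the condition that makes $w\partial$ act with the right eigenvalue. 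I would pin this down from \eqref{eq.phi_aut}.

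Next, for part (1), the strategy is: if $B^H$ were Artin--Schelter regular, then since $B$ is AS-regular (it is $\NN$-graded Calabi--Yau of global dimension $3$ by Theorem~\ref{thm.intro-cy}, noting $f=t^d$ with $d\geq2$ gives $\gldim 3$), a theorem in the style of Watanabe / Jørgensen--Zhang forces the homological determinant $\hdet$ to be trivial on $H$, i.e. $H \subseteq \mathrm{SL}$-type subgroup. So I would (a) compute $\hdet \phi$ for each $\phi \in \Autgr(B)$ — this is the technical heart — and (b) show that the subgroup of homologically-trivial graded automorphisms is itself trivial, or at least that any nontrivial $H$ must contain an element of nontrivial homological determinant. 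The computation of $\hdet$ can be done via the action on the top of the Koszul-type / minimal free resolution: because $B$ is Calabi--Yau the Nakayama automorphism is trivial, and $\hdet$ is read off the one-dimensional $\Ext^3_B(\kk,B)$; concretely, for $\phi_{a,\xi,h}$ one gets $\hdet = (\text{scalar on } u)(\text{scalar on }v)(\text{scalar on }w) = \xi \cdot \xi^{-1} \cdot a = a$ (up to the precise normalization), so $\hdet\phi_{a,\xi,h} = 1 \iff a = 1$. Combined with the relation-forced constraint, the only automorphism with trivial homological determinant is the identity — unless the $\xi$-scalings also contribute, in which case I would track those carefully. Either way the conclusion is that a nontrivial $H$ has nontrivial $\hdet$, hence $B^H$ is not AS-regular.

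For part (2), I would show that when $H \subseteq \{\phi_{a,\xi,h} : a^{d+1}=1\}$ the homological determinant, while possibly nontrivial, lands in a group for which the AS-Gorenstein property of $B^H$ still holds: the relevant result (e.g. the graded version of the fact that $B^H$ is AS-Gorenstein whenever $\hdet|_H$ is trivial, but here one needs a twisted version) should be applied after observing that $a^{d+1}=1$ is precisely the condition making $\hdet|_H$ trivial in an appropriately twisted sense, or making the Nakayama automorphism of $B^H$ inner. Concretely I expect: $B$ AS-Gorenstein plus $B$ a finitely generated module over $B^H$ (automatic for finite $H$) plus the $a^{d+1}=1$ condition gives, via a Reynolds-operator / Watanabe-type argument, that $B^H$ has finite injective dimension with the one-sided Gorenstein condition on $\Ext^{\bullet}_{B^H}(\kk,B^H)$. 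The main obstacle, and the place I would spend the most care, is the exact determination of $\Autgr(B)$ and the precise formula for $\hdet\phi_{a,\xi,h}$ in terms of $(a,\xi,h)$ and $d$ — in particular making sure the lower-order correction term $h$ (which shifts $w$ by something in $\kk u + \kk v$ or in degree-$d$ pieces) does not affect $\hdet$, and getting the normalization of the weights right so that the condition "$a^{d+1}=1$" emerges cleanly rather than, say, "$\xi^{d+1}a = 1$." Once $\hdet$ is computed, both parts follow from the established dichotomy (trivial $\hdet$ $\Rightarrow$ regular/Gorenstein behavior; nontrivial $\Rightarrow$ failure of regularity) for AS-regular algebras under finite group actions.
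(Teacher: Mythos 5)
Your part (2) is essentially the paper's argument: one computes $\hdet(\phi_{a,\xi,h})=a^{d+1}$ (after first diagonalizing finite-order automorphisms via Proposition \ref{prop.diag}, which disposes of your worry about the correction term $h$), so $a^{d+1}=1$ means $\hdet$ is literally trivial on $H$, and the J{\o}rgensen--Zhang theorem \cite[Theorem 3.3]{gourmet} gives AS Gorensteinness directly --- no twisted version is needed. Your guessed formula $\hdet=a$ comes from a different normalization of the parameters; in the paper's normalization $\phi_{a,\xi,h}(u)=au$, $\phi_{a,\xi,h}(v)=\xi a v$, $\phi_{a,\xi,h}(w)=\xi^{-1}a^{d-1}w+h$, and the product of the three scalars is $a^{d+1}$.

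Part (1), however, has a genuine gap. You rest the whole argument on the implication ``$B^H$ AS regular $\Rightarrow$ $\hdet|_H$ trivial,'' but no such theorem exists; the Watanabe/J{\o}rgensen--Zhang result goes only in the other direction (trivial $\hdet$ $\Rightarrow$ Gorenstein). Already in the commutative case the implication fails: $\kk[x,y]^{\mathbb{Z}/2}$ with the generator acting by $(x,y)\mapsto(-x,y)$ is a polynomial ring even though the determinant is $-1$. Worse, the implication is contradicted by this very theorem: any nontrivial $H\subseteq\{\phi_{a,\xi,h}:a^{d+1}=1\}$ (for instance $a$ a primitive $(d+1)$-root of unity, $\xi=1$, $h=0$) has $\hdet|_H$ trivial, yet part (1) asserts $B^H$ is still not regular --- so triviality of $\hdet$ cannot be the obstruction, and your claim that ``the subgroup of homologically-trivial graded automorphisms is itself trivial'' is false. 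The correct tool is the quasi-reflection criterion of Kirkman--Kuzmanovich--Zhang \cite[Theorem 2.4]{KKZ5}: if $B^H$ had finite global dimension then $H$ would contain a nontrivial reflection, i.e.\ an element whose trace series has a pole of order $\geq 2$ at $t=1$. The paper computes
\[ \Tr_B(\phi_{a,\xi},t)=\bigl((1-at)(1-\xi a t)(1-\xi^{-1}a^{d-1}t^{d-1})\bigr)^{-1} \]
and a short case analysis (using that $\xi$ is a $(d+1)$-root of unity) shows $(1-t)^2$ divides the denominator only when $a=\xi=1$, so no nontrivial reflections exist. To repair your proof you would need to replace the $\hdet$ criterion in part (1) with this trace-series computation.
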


\subsection*{Structure of the paper}

In Section \ref{sec.back}, we provide background on common constructions used throughout the paper. We show that $B_q(f)$ is an iterated skew polynomial ring and has either an $\NN$-grading or $\NN$-filtration. It is established that the $B_q(f)$ have Gelfand--Kirillov dimension 3 and we prove Theorem \ref{thm.intro-cy}.

In Section \ref{sec.center} we study the center of the $B_q(f)$. We do not fully compute the center in all cases, but determine enough central elements in order to prove Theorem \ref{thm.intro-ozone}.

In Section \ref{sec.invariant} we compute the graded automorphism group of the $B_q(t^d)$, $d \geq 2$. We determine all finite-order graded automorphisms when $q \neq 1$ and use invariant-theoretic results to prove Theorem \ref{thm.intro-invariant}.

We end in Section \ref{sec.questions} with some additional questions and comments for further study.

\subsection*{Acknowledgements}
Gaddis is partially supported by an AMS–Simons Research Enhancement Grant for PUI Faculty. 

\section{Initial results}
\label{sec.back}

First we establish some isomorphisms which will simplify our presentation.

\begin{lemma}
Fix $q \in \kk^\times$ and $f \in \kk[t]$.
\begin{enumerate}
\item\label{lem.iso1} $B_q(f) \iso B_{q\inv}(f)$.
\item If $c$ is the leading coefficient of $f$, then $B_q(f) \iso B_q(c\inv f)$. That is, up to isomorphism, $f$ is monic.
\item\label{lem.iso2} Let $q=1$, and $f \in \kk[t]$ and $\alpha \in \kk$ so that $g = f(t-\alpha)$. Then $B_1(f) \iso B_1(g)$.
\end{enumerate}
\end{lemma}
\begin{proof}
\eqref{lem.iso1}
The first isomorphism is obtained from the linear map $u \mapsto v$, $v \mapsto u$, and $w \mapsto w$. The second is obtained from the linear map $u \mapsto u$, $v \mapsto v$, and $w \mapsto cw$.
\par\eqref{lem.iso2}
Define a map $\phi:B_1(f) \to B_1(g)$ by $\phi(u)=u-\alpha$, $\phi(v)=v-\alpha$, and $\phi(w)=w$. It is clear that $\phi$ preserves the relation $uv-vu$. Now we have
\[
\phi(wu-uw-f(v)) = w(u-\alpha) - (u-\alpha)w - f(v-\alpha)
    = wu-uw-g(v) = 0.\]
The other relation is checked similarly.
\end{proof}

\subsection{Ore extensions}
Let $A$ be an algebra and $\sigma$ an automorphism of $A$. A \emph{$\sigma$-derivation} of $A$ is a $\kk$-linear map $\delta$ such that $\delta(aa')=\sigma(a)\delta(a')+\delta(a)a'$ for all $a,a' \in A$. Note that $\delta$ is an ordinary derivation exactly when $\sigma=\id_A$. 

Given an algebra $A$, an automorphism $\sigma$ of $A$, and $\sigma$-derivation $\delta$ of $A$, the \emph{Ore extension} $C=A[x;\sigma,\delta]$ is generated as an algebra over $A$ by the indeterminate $x$ subject to the relations $xa = \sigma(a)x+\delta(a)$ for all $a \in A$.

For $q \in \kk^\times$, the corresponding \emph{quantum plane} is defined as 
\[\kk_q[u,v]=\kk\langle u,v \mid uv-qvu\rangle.\]

\begin{lemma}\label{lem.ore}
For $f \in \kk[t]$, $B_q(f)$ is the 
Ore extension $\kk_q[u,v][w;\sigma,\delta]$ where
\[ 
\sigma(u)=qu, \quad \sigma(v)=q\inv v, \qquad 
\delta(u)=f(v), \quad \delta(v)=f(u).\]
\end{lemma}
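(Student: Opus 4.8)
The plan is to check the three things that make the right-hand side a bona fide Ore extension presenting the same algebra: that $\sigma$ extends to an automorphism of the quantum plane $\kk_q[u,v]$, that $\delta$ extends to a $\sigma$-derivation of $\kk_q[u,v]$, and that the resulting Ore extension has precisely the three defining relations of $B_q(f)$.

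For $\sigma$: the prescribed values on $u,v$ define an invertible linear map, so it suffices to see that the single relation of $\kk_q[u,v]$ is respected, and indeed $\sigma(u)\sigma(v) - q\,\sigma(v)\sigma(u) = (qu)(q\inv v) - q(q\inv v)(qu) = uv - qvu = 0$; hence $\sigma \in \Aut(\kk_q[u,v])$. For $\delta$: I would first lift $\sigma$ and $\delta$ to the free algebra $\kk\langle u,v\rangle$ --- an endomorphism $\sigma$ and a $\sigma$-derivation $\delta$, each uniquely determined by its values on $u,v$ --- and then check that $\delta$ carries the ideal $I=(uv-qvu)$ into itself, so that it descends. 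From the $\sigma$-Leibniz rule one computes, for $r=uv-qvu$, that $\delta(a r b) = \sigma(a)\sigma(r)\delta(b) + \sigma(a)\delta(r)b + \delta(a) r b$ for all $a,b$; since $\sigma(r)=r\in I$, it remains only to verify $\delta(r)\in I$. Expanding,
\[
\delta(uv-qvu) = \sigma(u)\delta(v)+\delta(u)v - q\bigl(\sigma(v)\delta(u)+\delta(v)u\bigr) = qu\,f(u)+f(v)v - v\,f(v) - q\,f(u)u,
\]
which is $0$ because $u$ commutes with $f(u)$ and $v$ commutes with $f(v)$, these being polynomials in a single variable. Thus $\delta$ is a well-defined $\sigma$-derivation of $\kk_q[u,v]$.

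Finally, the Ore extension $C=\kk_q[u,v][w;\sigma,\delta]$ is by construction presented by $u,v,w$, the relation $uv-qvu$ inherited from $\kk_q[u,v]$, and the relations $wa=\sigma(a)w+\delta(a)$ for $a\in\kk_q[u,v]$; since both sides are additive and multiplicative in $a$, it is enough to impose these for $a=u$ and $a=v$, which give $wu - qu w - f(v)=0$ and $wv - q\inv v w - f(u)=0$. These are exactly the two remaining relations in Definition \ref{defn.Bq}, so $C\iso B_q(f)$. The only step with any content is the descent of $\delta$ to the quotient, and even there the obstruction vanishes precisely because $f$ is a one-variable polynomial; the rest is bookkeeping with presentations and the universal property of Ore extensions.
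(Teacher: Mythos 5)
Your proof is correct and takes essentially the same approach as the paper: the one computation with content is verifying $\delta(uv-qvu)=0$, which you and the paper carry out identically. Your additional remarks on lifting to the free algebra and matching the presentations are details the paper leaves implicit.
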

\begin{proof}
We need only verify that $\delta$ is a $\sigma$-derivation of $\kk_q[u,v]$, which we do below:
\begin{align*}
\delta(uv-qvu)
    &= (\sigma(u)\delta(v) + \delta(u)v) 
        - q(\sigma(v)\delta(u) + \delta(v)u) \\
    &= (qu f(u) + f(v)v) - q(q\inv v f(v) + f(u)u) = 0.
\end{align*}
This proves the result.
\end{proof}

It follows from the Ore extension presentation that $\{ v^i u^j w^k : i,j,k \in \NN\}$ is a $\kk$-algebra basis for $B_q(f)$.

\subsection{Gradings, filtrations, and regularity}
\label{sec.gradings}

Let $\Gamma$ be an abelian monoid. An algebra $A$ is \emph{$\Gamma$-graded} if there is a vector space decomposition $A = \bigoplus_{\gamma \in \Gamma} A_\gamma$ such that $A_\gamma A_{\gamma'} \subset A_{\gamma + \gamma'}$. 
For any $\degf \in \NN$, $B_q(t^\degf)$ is $\ZZ$-graded by setting
$\deg u = \deg v = 1$ and $\deg w = \degf-1$. 

If $\degf \geq 2$, then $B_q(t^\degf)$ is $\NN$-graded and \emph{connected} (i.e., $(B_q(t^\degf))_0 = \kk$). In particular, $B_q(t^\degf)$ is a \emph{quantum plane} of weight $(1,1,\degf)$ \cite{steph2}. In the special case $f=0$, we will prefer to use the grading $\deg u = \deg v = \deg w = 1$, so $B_q(0)$ is also connected $\NN$-graded.

An $\NN$-graded algebra $A$ is \emph{Artin--Schelter (AS) Gorenstein} of dimension $d$ if $A$ has injective dimension $d<\infty$ on the left and on the right, and there are isomorphisms:
$\Ext_A^i({}_A\kk,A) \iso \delta_{id} \kk_A$
where $\delta_{id}$ is the Kronecker-delta.
If $A$ additionally has finite global dimension and Gelfand-Kirillov (GK) dimension, then $A$ is 
\emph{Artin--Schelter (AS) regular}. The following is well-known.

\begin{lemma}\label{lem.asreg}
(1) The algebra $B_q(0)$ is AS regular of global and GK dimension three with Hilbert series $(1-t)^{-3}$.

(2) For $\degf \geq 2$, the algebra $B_q(t^\degf)$ is AS regular of global and GK dimension three with Hilbert series $(1-t)^{-2}(1-t^\degf)\inv$.
\end{lemma}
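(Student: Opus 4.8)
The lemma bundles, for each algebra, three claims — AS regularity, the equality $\gldim=\GKdim=3$, and the Hilbert series — and my plan is to deduce the first two from the Ore extension presentation of Lemma \ref{lem.ore} and the third from the explicit graded structure recorded in Section \ref{sec.gradings}. When $f=0$ the $\sigma$-derivation vanishes, so $B_q(0)=\kk_q[u,v][w;\sigma]$ is a skew polynomial extension; since $\kk_q[u,v]$ is itself a skew polynomial ring over $\kk[v]$, $B_q(0)$ is an iterated Ore extension of $\kk$ by automorphisms alone, i.e. a quantum affine $3$-space on the three degree-one generators. For $\degf\ge 2$, Lemma \ref{lem.ore} presents $B_q(t^\degf)=\kk_q[u,v][w;\sigma,\delta]$ as an Ore extension of the quantum plane in which $\sigma$ is a graded automorphism and $\delta$ a homogeneous $\sigma$-derivation for the connected $\NN$-grading of Section \ref{sec.gradings}.

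The regularity and dimension assertions are then instances of standard preservation theorems, which is why the lemma is quoted as well-known. The quantum plane $\kk_q[u,v]$ is AS regular of global and GK dimension two, and I would use that a connected graded Ore extension of a connected graded AS regular algebra is again AS regular with global dimension raised by one, the GK dimension also rising by one because $\sigma$ is an automorphism. A single application yields that both $B_q(0)$ and $B_q(t^\degf)$ are AS regular of global and GK dimension three. The same two conclusions can be obtained concretely from the PBW basis $\{v^iu^jw^k\}$ furnished by Lemma \ref{lem.ore}, which exhibits each algebra as a Noetherian domain whose Hilbert function has polynomial growth of degree three.

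The Hilbert series is the one place where the grading enters decisively. In part (1) the algebra is a quantum affine $3$-space on three generators of degree one, so I would read off that its Hilbert series equals that of the commutative polynomial ring, $(1-t)^{-3}$. In part (2), Section \ref{sec.gradings} records — following \cite{steph2} — that $B_q(t^\degf)$ is a quantum plane of weight $(1,1,\degf)$; an algebra with this weighted-polynomial structure shares the Hilbert series of the weighted commutative polynomial ring on generators of degrees $1,1,\degf$, namely
\[ H_{B_q(t^\degf)}(t)=\frac{1}{(1-t)^2(1-t^\degf)}. \]
Equivalently, one derives this from the self-dual minimal free resolution of the trivial module over an AS regular algebra, reading off $H^{-1}=(1-t)^2(1-t^\degf)$ from the generator degrees $1,1,\degf$ and the dual relation degrees.

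The step demanding the most care is lining up the graded data used in the two halves of the argument: the Ore extension of Lemma \ref{lem.ore} must be carried out in the connected graded category, and the weight assigned to $w$ must be the one that produces the factor $(1-t^\degf)^{-1}$ and matches the weight $(1,1,\degf)$ invoked for the Hilbert series, so that the preservation theorem for AS regularity and the quantum-plane Hilbert series refer to the same grading. Once the grading is fixed consistently, every clause of the lemma reduces to a well-known property of Ore extensions of the quantum plane.
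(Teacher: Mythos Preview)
Your strategy for AS regularity and the dimension claims—reducing everything to the graded Ore extension of Lemma~\ref{lem.ore} and invoking standard preservation theorems—matches the paper's approach: it simply cites \cite{ASch} for (1) and \cite{steph} (or alternatively Lemma~\ref{lem.ore} combined with \cite[Proposition~2]{ASTgln}) for (2). On those points your expansion is correct and in line with what the paper does.

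The Hilbert series step, however, has a genuine slip. You take the generator degrees to be $1,1,\degf$ and read off $H^{-1}=(1-t)^2(1-t^\degf)$, but Section~\ref{sec.gradings} fixes $\deg u=\deg v=1$ and $\deg w=\degf-1$; assigning $\deg w=\degf$ is not an option, since then $wu-quw-v^\degf$ is not homogeneous. With the correct grading the PBW basis $\{v^iu^jw^k\}$ gives
\[
H_{B_q(t^\degf)}(t)=\frac{1}{(1-t)^2(1-t^{\degf-1})},
\]
and the free resolution displayed just before Theorem~\ref{thm.cy} confirms this: the alternating sum of shifts factors as $(1-t)^2(1-t^{\degf-1})$. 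For $\degf=2$ this collapses to $(1-t)^{-3}$, as it must for a quadratic AS regular algebra of dimension three, whereas the stated $(1-t)^{-2}(1-t^2)^{-1}$ does not. So the exponent $\degf$ in the lemma is a typo for $\degf-1$; the phrase ``weight $(1,1,\degf)$'' from \cite{steph2} is not the tuple of generator degrees, and your sentence ``the weight assigned to $w$ must be the one that produces the factor $(1-t^\degf)^{-1}$'' forces a grading that does not exist on $B_q(t^\degf)$.
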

\begin{proof}
(1) See \cite{ASch}.

(2) See \cite{steph}. Alternatively, it follows from Lemma \ref{lem.ore} and \cite[Proposition 2]{ASTgln}.
\end{proof}

An \emph{$\NN$-filteration} $\cF$ on an algebra $A$ is a collection of vector subspaces $\{F_k A\}$ of $A$ satisfying $A = \bigcup_{k \in \NN} F_k A$, $F_k A \subset F_{k+1} A$ and $F_k A F_\ell \subset F_{k+\ell} A$. Given a filtration $\cF$ of $A$, the \emph{associated graded ring} is $\gr_\cF A = \bigoplus_{k \in \NN} F_k A/F_{k-1} A$ (where $F_{-1}=0$). We drop the subscript $\cF$ when the filtration is understood.

If $f$ is not a monomial but $\deg(f) = \degf \geq 2$, then again setting $\deg u = \deg v = 1$ and $\deg w = \degf-1$ defines a filtration such that the associated graded ring $\gr B_q(f)$ is isomorphic to $B_q(t^\degf)$. If $\degf < 2$, then setting $\deg u = \deg v = \deg w = 1$ gives $\gr B_q(f) \iso B_q(0)$.

\begin{lemma}\label{lem.GK}
For any $q \in \kk^\times$ and any $f \in \kk[t]$, $\GKdim B_q(f) = 3$.
\end{lemma}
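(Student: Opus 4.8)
The plan is to reduce to the known regular cases via the filtration/grading machinery already set up. Recall that GK dimension is well-behaved with respect to associated graded rings: for a filtered algebra $A$, one has $\GKdim A \leq \GKdim \gr_\cF A$, provided the filtration is suitably finite-dimensional in each degree. So the first step is to choose, for a given $f \in \kk[t]$, the appropriate filtration on $B_q(f)$ described in Section \ref{sec.gradings}. If $\deg f = \degf \geq 2$, put $\deg u = \deg v = 1$ and $\deg w = \degf - 1$; if $\deg f < 2$ (including $f$ constant or zero), put $\deg u = \deg v = \deg w = 1$. In either case the excerpt records that the associated graded ring is isomorphic to $B_q(t^\degf)$ (resp. $B_q(0)$), which by Lemma \ref{lem.asreg} is AS regular of GK dimension three. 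Hence $\GKdim B_q(f) \leq 3$.

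For the reverse inequality, I would use the explicit PBW-type basis $\{v^i u^j w^k : i,j,k \in \NN\}$ furnished by the Ore extension presentation (Lemma \ref{lem.ore}). Fixing the generating subspace $V = \kk + \kk u + \kk v + \kk w$, the span of $\{v^i u^j w^k : i + j + k \leq n\}$ (using whichever grading assigns each generator degree $\leq$ something bounded, say degree $1$ for $u,v$ and degree $\degf-1$ for $w$, or simply bounding total word length) sits inside a bounded power of $V$, and these monomials are linearly independent, so $\dim_\kk V^n$ grows at least like a cubic polynomial in $n$. This gives $\GKdim B_q(f) \geq 3$, and combined with the previous paragraph we conclude $\GKdim B_q(f) = 3$.

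The only mild subtlety — and the step I would be most careful about — is making sure the chosen filtration is exhaustive and has finite-dimensional pieces so that the inequality $\GKdim A \leq \GKdim \gr A$ genuinely applies; when $f$ is a non-monomial of degree $\geq 2$ the grading is only a filtration (mixed-degree terms appear in the defining relations), but assigning $\deg w = \degf - 1$ exactly balances the relations $wu - quw - f(v)$ and $wv - q\inv vw - f(u)$ in top degree, which is precisely why $\gr B_q(f) \iso B_q(t^\degf)$. Since each graded piece is finite-dimensional (it is spanned by finitely many of the basis monomials $v^i u^j w^k$), the hypotheses of the comparison lemma hold. Everything else is routine, so the lemma follows.
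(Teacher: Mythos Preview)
Your approach is correct and essentially the same as the paper's: both reduce to the associated graded ring and invoke Lemma~\ref{lem.asreg}. The paper's proof is a one-liner, citing directly the standard equality $\GKdim A = \GKdim \gr_\cF A$ for affine $A$ (e.g., McConnell--Robson 8.6.5 or Krause--Lenagan), whereas you split this into the inequality $\GKdim A \leq \GKdim \gr_\cF A$ and then recover the lower bound by counting PBW monomials; the latter step is correct but redundant once you know the equality.
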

\begin{proof}
Since $B_q(f)$ is affine, then $\GKdim B_q(f) = \GKdim \gr B_q(f)$. The result now follows from the Lemma \ref{lem.asreg} and the discussion above.
\end{proof}

\subsection{The Calabi--Yau property}
We denote the enveloping algebra $A \otimes A^{\op}$ of an algebra $A$ by $A^e$. An algebra $A$ is \emph{homologically smooth} if there exists a finite resolution of $A$ in $A^e-\Mod$ consisting of finite length projectives. Then $A$ is \emph{Calabi--Yau} (CY) of dimension $d$ if it is homologically smooth and there exist isomorphisms
\[ \Ext_{A^e}^i(A,A^e) \iso \delta_{id} A.\]

The notion of a CY algebra was introduced by Ginzburg \cite{ginz}. Examples of CY algebras include polynomial rings over a field, the Weyl algebra, and Sklyanin algebras. The algebras $B_q(t^2)$ are also known to be CY. We show below that the $B_q(f)$ are all CY.

Following Ginzburg's philosophy, every CY algebra appearing ``in the wild" should come from a potential. Let $\bw \in \kk\langle u,v,w\rangle$ be homogeneous of degree $N$. We say $\bw$ is a \emph{potential} if $\bw$ is closed under the map \[ a_1a_2\cdots a_N \mapsto a_Na_1a_2 \cdots a_{N-1}\] for $a_i \in \{u,v,w\}$. Then we define 
\[ \partial_b(a_1a_2\cdots a_N) = 
\begin{cases}
    b\inv a_1a_2\cdots a_N = a_2\cdots a_N & \text{if $b=a_1$} \\
    0 & \text{otherwise},
\end{cases}\]
extended linearly. We denote $\bw_b = \partial_b(\bw)$. 

More generally, a nonhomogeneous potential $\bw$ is a sum of homogeneous potentials. The algebra $\kk\langle u,v,w\rangle/(\bw_b : b \in \{u,v,w\})$ is called a \emph{derivation-quotient algebra}.

Suppose $\deg f = \degf$ and write
\begin{align}\label{eq.fform} 
    f(t) = \sum_{j=0}^\degf c_j t^j.
\end{align}
Set
\[ F(t) = \sum_{j=0}^\degf c_j t^{j+1}\] 
and define the potential 
\begin{align}\label{eq.potential}
	\bw=(uvw+vwu+wuv) - q(vuw+uwv+wvu) + qF(u) - F(v).
\end{align} 
Then
\[
	\bw_u = vw - qwv + qf(u), \qquad \bw_v = wu - quw - f(v), \qquad \bw_w = uv - qvu.
\]
Factoring $-q$ out of the first relation gives $\bw_u' = wv-q\inv vw - f(u)$. Then $\bw_u', \bw_v, \bw_w$ are exactly the relations defining the $B_q$.

\begin{lemma}\label{lem.cy1}
Suppose $\degf \leq 1$. Then $B_q(f)$ is CY.
\end{lemma}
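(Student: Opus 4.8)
The plan is to prove that $B_q(f)$ is CY when $\deg f \le 1$ by exhibiting it as a derivation-quotient algebra of the potential $\bw$ in \eqref{eq.potential} and then verifying the Calabi--Yau condition directly, using that $B_q(f)$ is an iterated Ore extension of low global dimension. The starting point is the observation recorded just before the statement: the relations $\bw_u'$, $\bw_v$, $\bw_w$ coming from $\bw$ are exactly the defining relations of $B_q(f)$, so $B_q(f)$ is the derivation-quotient algebra of $\bw$. For $\deg f \le 1$ we know from Lemma \ref{lem.ore} that $B_q(f)$ is an Ore extension $\kk_q[u,v][w;\sigma,\delta]$, hence has a PBW basis $\{v^iu^jw^k\}$ and finite global dimension; by Theorem \ref{thm.intro-cy} (which the excerpt lets us cite) $\gldim B_q(f)$ is $2$ or $3$.

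The key steps, in order, are as follows. First I would assemble the Koszul-type (or, more precisely, the short) complex of $B^e$-modules associated to the potential $\bw$: since $\bw$ is a potential of degree $3$ (the term $F(u),F(v)$ only contributes degree $\le 2$ inhomogeneous pieces, which is why we must work with the filtered/inhomogeneous version), the derivation-quotient construction yields a length-$3$ complex
\begin{equation*}
0 \longrightarrow B^e \longrightarrow (B^e)^3 \longrightarrow (B^e)^3 \longrightarrow B^e \longrightarrow B \longrightarrow 0,
\end{equation*}
whose maps are built from the first- and second-order partial derivatives of $\bw$; this is the standard ``$3$-dimensional CY from a potential'' complex (as in Bocklandt--Schedler--Wemyss or Ginzburg). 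Second, I would verify exactness of this complex. The clean way is to pass to the associated graded algebra: by the discussion in Section \ref{sec.gradings}, $\gr B_q(f) \iso B_q(0)$ when $\deg f \le 1$, and $B_q(0) = \kk_q[u,v,w]$ (quantum affine $3$-space) is Koszul AS-regular and is well-known to be CY with exactly this potential-resolution. A standard filtered-to-graded lifting argument (the associated graded of the candidate complex for $B_q(f)$ is the known resolution for $B_q(0)$, and exactness passes back up) then gives exactness of the complex for $B_q(f)$ itself, so $B_q(f)$ is homologically smooth. Third, I would apply $\Hom_{B^e}(-,B^e)$ to this self-dual complex; by the symmetry of the potential (the cyclic invariance built into the definition of $\bw$), the dual complex is isomorphic to the original complex up to a shift and a twist by the Nakayama automorphism, which here is seen to be trivial or at worst inner, yielding $\Ext^i_{B^e}(B,B^e) \iso \delta_{i3}B$ (or $\delta_{i2}B$ in the degenerate $q=1$, $f=1$ case, handled by the analogous shorter complex). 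This is exactly the CY condition.

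The main obstacle I anticipate is the bookkeeping in the filtered-to-graded step when $\deg f \le 1$: because $F(u)$ and $F(v)$ are inhomogeneous, the potential $\bw$ is not graded and the complex is only filtered, so one must be careful that the differentials are filtered maps and that $\gr$ of the complex really is the honest Koszul complex of $B_q(0)$ rather than some degeneration with smaller homology. Once that is checked, exactness in the filtered setting follows from a routine spectral-sequence or induction-on-filtration argument. A secondary, more cosmetic obstacle is that when $\gldim B_q(f) = 2$ (the case $q=1$ or $q$ a nonroot of unity, with $f=1$) the length-$3$ complex above is no longer minimal and one should instead point to the $2$-dimensional CY structure of the Weyl-algebra-type quotient $A_1(\kk)[u]$ (cf.\ Example \ref{ex.weyl}); I would dispatch that case separately with a one-line reference to the known CY property of the Weyl algebra and of polynomial extensions of CY algebras. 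Modulo these two points, the argument is a direct verification and should go through cleanly.
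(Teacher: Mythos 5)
Your overall strategy is sound, but it amounts to unpacking by hand the theorem the paper simply cites. The paper's proof is two lines: when $f=0$ the potential $\bw$ of \eqref{eq.potential} is exactly the potential of the Calabi--Yau skew polynomial ring $B_q(0)$ (by \cite[Theorem 0.10]{CGWZ2}), and for $\degf\leq 1$ the inhomogeneous terms $qF(u)-F(v)$ have degree $\leq 2<3$, so $B_q(f)$ is a PBW deformation of $B_q(0)$ in the sense of Berger--Taillefer; \cite[Theorem 1.1]{BT} then says that such a PBW deformation of a (Koszul) Calabi--Yau algebra is again Calabi--Yau. Your filtered-to-graded lifting of the length-$3$ potential complex, with exactness pulled back from the known resolution over $\gr B_q(f)\iso B_q(0)$, is essentially the proof of that theorem, so the mathematical content is the same; what the citation buys is that none of the bookkeeping you (rightly) flag as the main obstacle needs to be redone.

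There is, however, one genuine error in your proposal: the separate treatment of the case $\gldim B_q(f)=2$. You conflate the one-sided global dimension with the Calabi--Yau dimension, which is the projective dimension of $B$ as a $B^e$-module (Hochschild dimension). For $B_1(1)\iso A_1(\kk)[\tilde u]$ the global dimension is $2$, but the Weyl algebra is Calabi--Yau of dimension $2$, so its polynomial extension is Calabi--Yau of dimension $3$: the length-$3$ complex from the potential is still a minimal $B^e$-projective resolution, and $\Ext^i_{B^e}(B,B^e)\iso\delta_{i3}B$. Your proposed alternative conclusion $\Ext^i_{B^e}(B,B^e)\iso\delta_{i2}B$ ``handled by the analogous shorter complex'' is false, and the case split is unnecessary --- the uniform argument (or the citation to \cite{BT}) covers all $q$ and all $f$ with $\degf\leq 1$ at once. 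Delete that case split and the rest of your argument goes through.
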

\begin{proof}
If $f=0$ (so $F=0$ as well), then $\bw$ is the potential corresponding to a CY skew polynomial ring \cite[Theorem 0.10]{CGWZ2}. The result now follows from \cite[Theorem 1.1]{BT}.
\end{proof}

We now turn to the case $\degf \geq 2$. 
Let $\bx^T=(u ~ v ~ w)$ and set $B=B_q(t^\degf)$ for some $\degf \geq 2$. The projective resolution of the trivial module ${}_B\kk = B_q(t^\degf)/(u,v,w)$ is 
\[
0 \to B[-\degf-1] \xrightarrow[]{\bx^T} B[-\degf]^{\oplus 2} \oplus B[-2] \xrightarrow[]{M} B[-1]^{\oplus 2} B[-\degf+1] \xrightarrow[]{\bx} B \to {}_A \kk \to 0
\]
where
\[ M = \begin{pmatrix}qu^{\degf-1} & -qw & v \\ w & -v^{\degf-1} & -qu \\ -qv & u & 0\end{pmatrix}.\]

\begin{theorem}\label{thm.cy}
The algebra $B_q(f)$ is CY for any $q \in \kk^\times$ and any $f \in \kk[t]$.
\end{theorem}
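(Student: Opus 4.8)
The case $\deg f = \degf \leq 1$ is Lemma~\ref{lem.cy1}, so I would assume $\degf \geq 2$. The plan has two stages: first establish the Calabi--Yau property for the homogeneous algebra $B = B_q(t^\degf)$, and then transfer it to an arbitrary $f$ of degree $\degf$ by a filtered-deformation argument.

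For $B = B_q(t^\degf)$, the key point is that $B$ is the derivation-quotient algebra of the \emph{untwisted} potential $\bw$ of \eqref{eq.potential}, and the resolution of ${}_B\kk$ displayed above is precisely the complex of free left $B$-modules attached to $\bw$: its differentials are $\bx$, the ``middle'' matrix $M$ (built from the second partial derivatives of $\bw$), and $\bx^T$. I would lift these three differentials to maps of finitely generated free $B$-bimodules (each has entries in $B$), obtaining a length-$3$ free resolution
\[
0 \to B^e[-\degf-1] \xrightarrow{\ \bx^T\ } \bigl(B^e[-\degf]\bigr)^{\oplus 2}\oplus B^e[-2] \xrightarrow{\ M\ } \bigl(B^e[-1]\bigr)^{\oplus 2}\oplus B^e[-\degf+1] \xrightarrow{\ \bx\ } B^e \to B \to 0
\]
of $B$ over $B^e$; exactness follows from exactness of the displayed resolution of $\kk$ (apply $-\otimes_B\kk$, which detects exactness of a bounded complex of free bimodules over the connected graded ring $B$), and in particular $B$ is homologically smooth. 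Applying $\operatorname{Hom}_{B^e}(-,B^e)$ and using that $M$ is self-dual up to reversal of its three summands --- equivalently, that $\bw$ carries no automorphism twist, so the Nakayama automorphism of $B$ is $\id_B$ --- returns the same complex up to the shift $[\degf+1]$. Hence $\Ext^i_{B^e}(B,B^e) \iso \delta_{i3}\, B$ as bimodules, which is the Calabi--Yau condition in dimension $3$. (Equivalently one may verify the hypotheses of Bocklandt's criterion for a graded derivation-quotient algebra of a potential to be Calabi--Yau of dimension $3$, or argue from the iterated Ore presentation of Lemma~\ref{lem.ore} together with the known formula for the Nakayama automorphism of an Ore extension, which on $u,v$ composes $\sigma$ with the Nakayama automorphism of $\kk_q[u,v]$ to give the identity.)

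For a general $f$ with $\deg f = \degf \geq 2$, Section~\ref{sec.gradings} shows that the filtration $\deg u = \deg v = 1$, $\deg w = \degf-1$ has associated graded ring $\gr B_q(f) \iso B_q(t^\degf)$, which is Calabi--Yau of dimension $3$ by the previous step. Moreover $B_q(f)$ is the derivation-quotient algebra of the inhomogeneous potential $\bw$ of \eqref{eq.potential}, whose top-degree component is the potential defining $B_q(t^\degf)$; so $B_q(f)$ is a PBW (filtered) deformation of $B_q(t^\degf)$ arising from a deformation of its defining potential. Invoking \cite[Theorem~1.1]{BT} --- exactly as in the proof of Lemma~\ref{lem.cy1}, but now with base algebra $B_q(t^\degf)$ --- shows $B_q(f)$ is Calabi--Yau of dimension $3$, completing the proof.

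The main obstacle I expect is the homogeneous case with $\degf > 2$: there the defining relations of $B_q(t^\degf)$ sit in the two different degrees $2$ and $\degf$, so $B_q(t^\degf)$ is not $N$-Koszul and the packaged ``$N$-Koszul potential algebra is Calabi--Yau'' theorems do not apply directly; one is forced to work with the explicit mixed-degree resolution above. The computational heart is checking that $\operatorname{Hom}_{B^e}(M,B^e)$ reproduces $M$ with no automorphism twist --- this is where triviality of the Nakayama automorphism is really used, and it is what upgrades ``AS regular'' (already available from Lemma~\ref{lem.asreg}) to ``Calabi--Yau''. One should also confirm on the nose the PBW identity $\gr B_q(f) \iso B_q(t^\degf)$ and that \cite{BT} genuinely covers this mixed-degree setting.
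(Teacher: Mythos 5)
Your proposal is correct and follows the same overall skeleton as the paper's proof: the case $\degf\leq 1$ is delegated to Lemma \ref{lem.cy1}, the homogeneous algebra $B_q(t^\degf)$ is shown to be Calabi--Yau, and the general $B_q(f)$ is then handled as a PBW deformation of $B_q(t^\degf)$ arising from the nonhomogeneous potential \eqref{eq.potential}, with \cite[Theorem 1.1]{BT} closing the argument. The one place you genuinely diverge is the homogeneous step. The paper disposes of it in two lines by quoting that the AS regular algebra $B_q(t^\degf)$ (Lemma \ref{lem.asreg}) is twisted Calabi--Yau by \cite[Lemma 1.2]{RRZ}, and that the twist is trivial because $B_q(t^\degf)$ is the derivation-quotient algebra of an honest (untwisted) superpotential; you instead propose to lift the displayed resolution of ${}_B\kk$ to a length-three free $B^e$-resolution and verify $\Ext^i_{B^e}(B,B^e)\iso\delta_{i3}B$ directly from its self-duality. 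Your route is more self-contained and makes explicit exactly where triviality of the Nakayama automorphism enters (the cyclic symmetry of the second partials of $\bw$), at the cost of the lifting-and-dualizing computation in the mixed relation degrees $2$ and $\degf$ --- which you correctly flag as the real work, since $B_q(t^\degf)$ is not $N$-Koszul for $\degf>2$ and the packaged potential-algebra theorems do not apply verbatim; the paper's citations buy precisely this computation. One small imprecision: the bimodule differentials have entries in $B\otimes B^{\op}$ (sums of tensors, e.g.\ $u\otimes 1-1\otimes u$ in the first map), not in $B$, though this does not affect the argument. Your parenthetical alternative via the Nakayama automorphism formula for Ore extensions, starting from Lemma \ref{lem.ore}, is also a valid and arguably cleaner substitute for the paper's derivation-quotient step.
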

\begin{proof}
The result for $\degf < 2$ follows from Lemma \ref{lem.cy1}. Assume $\degf \geq 2$. Then $B_q(t^\degf)$ is AS regular by Lemma \ref{lem.asreg}, hence it is twisted CY by \cite[Lemma 1.2]{RRZ}.
Since $B_q(t^\degf)$ is a derivation-quotient algebra, then $B_q(t^\degf)$ is CY. The general case now follows from the definition of $B_q(f)$ as a derivation-quotient algebra and \cite[Theorem 1.1]{BT}.
\end{proof}

In contrast to GK dimension above, the question of global dimension is more sensitive. By \cite[Theorem 5.3 (i)]{MR}, $2 \leq \gldim B_q(f) \leq 3$. It is well-known that $B_q(t^d)$ has global dimension 3 for $d \geq 2$ (see Lemma \ref{lem.asreg}). This is also true of $B_q(0)$, which is a skew polynomial ring, and $B_1(t)$, which is isomorphic to the enveloping algebra of a three-dimensional Lie algebra. On the other hand, we have the following example.

\begin{example}\label{ex.weyl}
Let $B=B_1(1)$, which has presentation is
\[ B = \kk\langle u,v,w \mid uv-vu, wu-uw-1, wv-vw-1\rangle.\]
Set $\tilde{u}=u-v$, then $\tilde{u},v,w$ is a generating space for $B$ with $\tilde{u}$ central. It is then clear that $B \iso A_1(\kk)[\tilde{u}]$, so $\gldim B = 2$.
\end{example}

The next result shows that, generically, $B_q(f)$ has global dimension 3.

\begin{lemma}\label{lem.gldim}
Suppose that either 
\begin{enumerate}
    \item \label{gldim3} $q \neq 1$ and $f(\alpha)=0$ for some $\alpha \neq 0$,
    \item \label{gldim1} $f=t^d$ for $d \geq 1$,
    \item \label{gldim2} $q=1$ and $d \geq 1$, or
    \item \label{gldim4} $f(t)=1$ and $q$ is a primitive $\ordq$-root of unity, $\ordq>1$.
\end{enumerate}
Then $\gldim B_q(f) = 3$.
\end{lemma}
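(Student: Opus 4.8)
Since the bound $2 \le \gldim B_q(f) \le 3$ is already in hand (this is exactly the inequality from \cite{MR} recalled above), the plan is to produce, in each of the four cases, a $B_q(f)$-module of projective dimension $3$, and to obtain it from the Calabi--Yau property of Theorem~\ref{thm.cy}. Write $B=B_q(f)$. Because $B$ is Calabi--Yau of dimension $3$ and $\gldim B<\infty$, every finite-dimensional $B$-module has a finite resolution by finitely generated projectives, and the identity $\Ext_{B^e}^i(B,B^e)\iso\delta_{i3}B$ yields, via van den Bergh duality (cf.\ \cite{RRZ}), a natural isomorphism
\[ \Ext_B^i(S,T)\iso\Ext_B^{3-i}(T,S)^* \]
for all finite-dimensional $B$-modules $S,T$. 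Taking $S=T\neq 0$ and $i=3$ gives $\Ext_B^3(S,S)\iso\operatorname{Hom}_B(S,S)^*\neq 0$ (the right-hand side contains $\id_S$), hence $\operatorname{pd}_B S=3$ and therefore $\gldim B=3$. So the lemma reduces to exhibiting a nonzero finite-dimensional $B_q(f)$-module in each case.

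For cases \ref{gldim3}, \ref{gldim1} and \ref{gldim2} a one-dimensional module suffices. Substituting scalars $u\mapsto a$, $v\mapsto b$, $w\mapsto c$ into the defining relations turns them into $(1-q)ab=0$, $(1-q)ac=f(b)$, and $(1-q\inv)cb=f(a)$. In case \ref{gldim3} take $a=0$, $b=\alpha$, and $c=f(0)/\bigl(\alpha(1-q\inv)\bigr)$, which is well defined since $q\neq 1$ and $\alpha\neq 0$, and which satisfies the equations because $f(\alpha)=0$. In case \ref{gldim1}, and more generally whenever $f(0)=0$, take $a=b=c=0$: this is the trivial module $\kk=B_q(f)/(u,v,w)$, which is legitimate because $(u,v,w)$ is then a proper two-sided ideal. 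In case \ref{gldim2}, where $q=1$, take $a=b=\beta$ for any root $\beta$ of $f$ and any $c$.

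Case \ref{gldim4} admits no one-dimensional module (the three equations force $ab=0$, so $a=0$ or $b=0$, contradicting $(1-q)ac=1$ or $(1-q\inv)cb=1$), so here I would pass to the center. From $wu=quw+1$ one computes $wu^k=q^ku^kw+(1+q+\dots+q^{k-1})u^{k-1}$, with the analogous identities for $v$ and for powers of $w$; since $q$ is a primitive $\ordq$-th root of unity, $1+q+\dots+q^{\ordq-1}=0$ and $q^{\ordq}=1$, and it follows that $u^{\ordq}$, $v^{\ordq}$, $w^{\ordq}$ are central. Combined with the basis $\{v^iu^jw^k:i,j,k\in\NN\}$ this exhibits $B_q(1)$ as a free module of rank $\ordq^3$ over the central polynomial subalgebra $P=\kk[u^{\ordq},v^{\ordq},w^{\ordq}]$, with basis $\{v^iu^jw^k:0\le i,j,k<\ordq\}$. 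Hence for any maximal ideal $\mathfrak m$ of $P$ the quotient $B_q(1)/\mathfrak m B_q(1)\iso(P/\mathfrak m)^{\ordq^3}=\kk^{\ordq^3}$ is a nonzero finite-dimensional $B_q(1)$-module, which we feed into the argument of the first paragraph.

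The elementary module constructions are routine; the one point that needs care is the duality isomorphism in this ungraded setting. The safe justification is that finite global dimension makes every finite-dimensional $B$-module a perfect complex, so the bimodule Calabi--Yau isomorphism induces the shift $[3]$ as a Serre functor on the bounded derived category of finite-dimensional $B$-modules, from which the displayed $\Ext$-duality follows; alternatively one can argue directly by dualizing a finite free resolution of $S$. I expect this to be the only place where a little care is required.
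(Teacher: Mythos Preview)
Your proof is correct and takes essentially the same approach as the paper: in each case exhibit a nonzero finite-dimensional module, then invoke the Calabi--Yau property to force $\gldim=3$ (the paper cites \cite[Remark~2.8]{BT} for this last step, whereas you spell out the duality/Serre-functor argument). Your specific one-dimensional modules in cases \eqref{gldim1} and \eqref{gldim2} differ slightly from the paper's choices (the paper uses a nontrivial diagonal action in \eqref{gldim1} and Lemma~\ref{lem.iso2} to reduce \eqref{gldim2} to \eqref{gldim1}), but the strategy and the treatment of case \eqref{gldim4} are identical.
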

\begin{proof}
By \cite[Remark 2.8]{BT}, it suffices in each case to demonstrate that $B_q(f)$ has a nontrivial finite-dimensional module.

\eqref{gldim3} 
Set $c=c_0=f(0)$. Let $M=\kk m$ and give $M$ a $B$-module structure by $u.m=0$, $v.m = \alpha m$, and $w.m = \frac{c}{(1-q\inv)\alpha}m$. It is easy to verify that this structure is well defined.

\eqref{gldim1} 
Let $M=\kk m$ and define a module structure by $u.m = v.m = w.m = 0$.

\eqref{gldim2} 
By applying the isomorphism in Lemma \ref{lem.iso2}, we may assume that $c_0=0$. We now apply \eqref{gldim1}.

\eqref{gldim4}
Set $B=B_q(1)$. It is reasonably easy to see that $u^n,v^n,w^m \in Z(B)$. This also follows from results in Section \ref{sec.center}. Now $M=B/(u^n,v^n,w^n)$ is a finite-dimensional module of $B$.
\end{proof}

\begin{remark}
An alternate way of obtaining the \eqref{gldim2} above is as follows. Recall that $B$ is the Ore extension $\kk[u,v][w;\delta]$. Let $\alpha=f(0)$ and note that $J=(u-v,v-\alpha)$ is a height two prime ideal of $\kk[u,v]$ such that $\delta(J) \subset J$. Now $\gldim B_1(f) = \gldim \kk[u,v] + 1 = 3$ by \cite[Theorem 8]{Gdim1} (see also \cite[Theorem 10.3]{MR}).
\end{remark}

\begin{theorem}\label{thm.gldim}
If $q=1$ or $q$ is a nonroot of unity, then $\gldim  B_q(1)=2$. For all other choices of $q$ and $f$, $\gldim B_q(f)=3$.
\end{theorem}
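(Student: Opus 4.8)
The plan is a case analysis resting on the results already established, together with one new argument for the single remaining family. Throughout, $\gldim B_q(f)\in\{2,3\}$ by \cite[Theorem 5.3(i)]{MR}, and by Lemma~\ref{lem.iso1} we may assume $f$ is $0$ or monic, and that a nonzero constant $f$ equals $1$. First I would check that $\gldim B_q(f)=3$ for every $(q,f)$ other than those with $f$ a nonzero constant and $q=1$ or $q$ a nonroot of unity. Indeed: if $f=0$ this is Lemma~\ref{lem.asreg}(1); if $\deg f\geq 1$ and $q=1$ it is Lemma~\ref{lem.gldim}(3); if $\deg f\geq 1$ and $q\neq 1$ then either $f$ is a monomial, so Lemma~\ref{lem.iso1}(2) and Lemma~\ref{lem.gldim}(2) apply, or $f$ is not a monomial, in which case $f$ has a nonzero root (write $f=t^k g$ with $g(0)\neq 0$; then $g$ is nonconstant, hence has a root, which is nonzero) and Lemma~\ref{lem.gldim}(1) applies; and if $f$ is a nonzero constant and $q$ is a primitive $\ordq$-root of unity with $\ordq>1$, then $B_q(f)\iso B_q(1)$ and Lemma~\ref{lem.gldim}(4) applies. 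This settles ``all other choices''.

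It then remains to prove $\gldim B_q(1)=2$ when $q=1$ or $q$ is a nonroot of unity. The case $q=1$ is Example~\ref{ex.weyl}. For $q$ a nonroot of unity my plan is to invoke \cite[Remark 2.8]{BT} in the form: the derivation-quotient algebra $B_q(1)$, filtered so that its associated graded ring is the Artin--Schelter regular algebra $B_q(0)$, has $\gldim=3$ if and only if it has a nontrivial finite-dimensional module. So the goal becomes to show $B:=B_q(1)$ has no nonzero finite-dimensional module $M$. Using $uv=qvu$, either $v$ is not injective on $M$, or $v$ is bijective, in which case $v\inv uv=qu$ makes the eigenvalue multiset of $u$ on $M$ invariant under multiplication by $q$, forcing $u$ nilpotent on $M$ since $q$ is not a root of unity. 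In the first case pick $0\neq m$ with $vm=0$ and (using that $\ker(v|_M)$ is $u$-stable) $um=\alpha m$; in the second pick $0\neq m$ with $um=0$ and $vm=\beta m$. In either case a short induction with the relations shows the submodule generated by $m$ is spanned by $\{w^k m:k\geq 0\}$, and that $v$ (resp.\ $u$) carries $w^k m$ to a scalar times $w^{k-1}m$, the scalar being a unit times the $q$-integer $\frac{1-q^k}{1-q}$ (resp.\ $\frac{1-q^{-k}}{1-q^{-1}}$). Taking $N$ minimal with $m,wm,\dots,w^N m$ linearly dependent, applying that operator to the relation $w^N m=\sum_{j<N}c_j w^j m$, and comparing coefficients of $w^{N-1}m$ forces the $q$-integer of index $N$ to vanish, i.e.\ $q^N=1$ (or $m=0$ when $N=1$)---impossible. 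Hence $M=0$ and $\gldim B_q(1)=2$.

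The main obstacle is this last step. Unlike the first Weyl algebra, where a trace argument rules out finite-dimensional modules, here the naive trace identities are inconclusive, and in fact the subalgebra $\kk\langle u,w\mid wu-quw=1\rangle\iso A_1^q(\kk)$ \emph{does} admit one-dimensional modules; the argument therefore has to engage all three relations simultaneously, and the split on whether $v|_M$ is injective is needed because one cannot assume $u$ or $v$ has a nonzero eigenvalue. (A homological alternative via the Ore extension $B_q(1)=A_1^q(\kk)[v;\sigma,\delta]$ seems less convenient: $\gldim A_1^q(\kk)=2$ for $q$ a nonroot of unity, so the general Ore-extension bound only yields $\gldim B_q(1)\leq 3$, and ruling out $3$ still requires essentially the module-theoretic input above.)
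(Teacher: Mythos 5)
Your proposal is correct, and it diverges from the paper's proof precisely in the one case that requires new work. For ``all other choices'' both you and the paper reduce to Example~\ref{ex.weyl} and Lemma~\ref{lem.gldim}; your write-up is actually more careful here, since the paper does not spell out why those four cases of Lemma~\ref{lem.gldim} exhaust everything outside the exceptional family (your observation that a non-monomial $f$ factors as $t^k g$ with $g$ nonconstant and $g(0)\neq 0$, hence has a nonzero root, is exactly the missing bookkeeping). For the remaining case $B=B_q(1)$ with $q$ a nonroot of unity, both arguments run through \cite[Remark 2.8]{BT} and aim to show $B$ has no nontrivial finite-dimensional module, but by different means: the paper quotes the unpublished Bell--Smith classification \cite{BS} to assert that every finite-dimensional simple $B$-module is one-dimensional, and then kills one-dimensional modules by evaluating the relations on a common eigenvector; you instead give a self-contained argument, splitting on whether $v$ acts injectively, producing a vector $m$ annihilated by $u$ or $v$, and showing that $v$ (resp.\ $u$) acts on the $w$-string $\{w^k m\}$ as a lowering operator with coefficient a unit times $[k]_q$ (resp.\ $[k]_{q\inv}$), so a minimal linear dependence forces $q^N=1$. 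Your route costs a page of computation but removes the dependence on an unpublished reference and handles all finite-dimensional modules directly rather than only the simple ones (the paper implicitly passes from ``no f.d.\ simples'' to ``no nontrivial f.d.\ modules''). I verified the key identities $vw^k=q^kw^kv-q[k]_q w^{k-1}$ and $uw^k=q^{-k}w^ku-q\inv[k]_{q\inv}w^{k-1}$ underlying your coefficient comparison; the argument is sound.
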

\begin{proof}
By Example \ref{ex.weyl} and Lemma \ref{lem.gldim}, we may assume that $B=B_q(1)$ with $q$ a nonroot of unity and $q \neq 1$. We refer to the discussion preceding \cite[Proposition 1.4, p.26]{BS}. By that reference, with $a=c=d=0$, $b=1$ and $\alpha=\beta=f=q$, all finite-dimensional simple modules of $B$ are one-dimensional.

Let $M=\kk m$ be a simple module of $B$ so that $u\cdot m = \lambda_u m$, $v\cdot m = \lambda_v m$, and $w\cdot m = \lambda_w m$ for $\lambda_u,\lambda_v,\lambda_w \in \kk$. Since we have assumed $q \neq 1$, then $(uv-qvu)\cdot m = 0$ implies that $\lambda_u=0$ or $\lambda_v=0$. WLOG, assume $\lambda_u=0$. Then $(wu-quw-1)\cdot m=0$ implies that $m=0$. By \cite[Remark 2.8]{BT}, $\gldim B \neq 3$. So, by \cite[Theorem 5.3]{MR}, $\gldim B = 2$.
\end{proof}

\section{\texorpdfstring{The center and ozone group of $B_q(f)$}{The center and ozone group of for Bq(f)}}
\label{sec.center}

In this section we compute the center of $B_q(f)$ for generic $q$, denoted $Z(B_q(f))$, and we show that certain elements are central when $q$ is a root of unity satisfying certain conditions relative to the polynomial $f$. These latter computations will be useful in considering the ozone group of $B_q(f)$ roots of unity.

For a positive integer $k$, define the $q$-number to be
\[ [k]_q = \frac{q^k-1}{q-1} = 1 + q + \cdots + q^{k-1}.\]
Note that $[k]_q=0$ if and only if $q$ is an $\ordq$-root of unity and $k \equiv 0 \mod\ordq$.
More generally, if we set $[k]_q! = [k]_q [k-1]_q \cdots [1]_q$, then the \emph{Gaussian binomial coefficients} are defined as
\[
    \binom{k}{i}_q = \frac{[k]_q!}{[i]_q![k-i]_q!}.
\]

For a polynomial $f$ as in \eqref{eq.fform},
we define the \emph{support} of $f$ to be 
\[\supp(f) = \{i \mid c_i \neq 0\}.\]

First we prove some useful identities.

\begin{lemma}\label{lem.cnt_id1}
For all $k \geq 1$,
\begin{align}
\label{eq.id1}
\delta(u^k) &= \sum_{j=0}^{\degf} [k]_{q^{j+1}} c_j v^j u^{k-1} \\
\label{eq.id2}
\delta(v^k) &= \sum_{j=0}^{\degf} [k]_{q^{-(j+1)}} c_j u^j v^{k-1}.
\end{align}
Consequently,
\begin{align}
\label{eq.comm1}
wu^k &= q^k u^k w + \sum_{j=0}^d [k]_{q^{j+1}} c_j v^j u^{k-1} \\
\label{eq.comm2}
wv^k &= q^{-k}v^kw + \sum_{j=0}^d [k]_{q^{-(j+1)}} c_j u^j v^{k-1}.
\end{align}
\end{lemma}
\begin{proof}
We prove \eqref{eq.id1} and \eqref{eq.id2} is similar. Since $\delta(u)=f(v)$, then this proves the case $k=1$. Suppose the identity holds for some $k \geq 1$. Then
\begin{align*}
\delta(u^{k+1})
    &= \sigma(u)\delta(u^k) + \delta(u)u^k \\
    &= qu \left( \sum_{j=0}^{\degf} [k]_{q^{j+1}} c_j v^j u^{k-1}\right) + \left(\sum_{j=0}^{\degf} c_j v^j\right) c_jv^j u^k \\
    &= \sum_{j=0}^{\degf} \left( q^{j+1}[k]_{q^{j+1}} + 1\right) c_jv^j u^k \\
    &= \sum_{j=0}^{\degf} [k+1]_{q^{j+1}} c_jv^j u^k.
\end{align*}
The result follows from induction.
\end{proof}

\begin{lemma}\label{lem.uv}
Let $q$ be a primitive $\ordq$-root of unity and $\ordq\nmid(j+1)$ for all $j\in\supp(f)$. Then $u^\ordq,v^\ordq\in Z(B_q(f))$.
\end{lemma}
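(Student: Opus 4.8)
The plan is to show that $u^{\ordq}$ and $v^{\ordq}$ commute with all three generators $u,v,w$; by symmetry (and the isomorphism $B_q(f)\iso B_{q\inv}(f)$ of Lemma~\ref{lem.iso1} swapping $u\leftrightarrow v$, which sends the hypothesis $\ordq\nmid(j+1)$ for $j\in\supp(f)$ to itself) it suffices to handle $u^{\ordq}$. Since $uv=qvu$, we get $u^{\ordq}v=q^{\ordq}vu^{\ordq}=vu^{\ordq}$ because $q^{\ordq}=1$, and trivially $u^{\ordq}$ commutes with $u$. So the only substantive relation to check is that $u^{\ordq}$ commutes with $w$.

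For this I would invoke Lemma~\ref{lem.cnt1}\eqref{eq.comm1} with $k=\ordq$:
\[
wu^{\ordq}=q^{\ordq}u^{\ordq}w+\sum_{j=0}^{\degf}[\ordq]_{q^{j+1}}c_j v^j u^{\ordq-1}
= u^{\ordq}w+\sum_{j\in\supp(f)}[\ordq]_{q^{j+1}}c_j v^j u^{\ordq-1}.
\]
The key point is that each coefficient $[\ordq]_{q^{j+1}}$ vanishes. Indeed $[\ordq]_{q^{j+1}}=\dfrac{(q^{j+1})^{\ordq}-1}{q^{j+1}-1}=\dfrac{(q^{\ordq})^{j+1}-1}{q^{j+1}-1}=0$, where the numerator is $0$ because $q^{\ordq}=1$, and the denominator $q^{j+1}-1$ is nonzero precisely because $q$ is a \emph{primitive} $\ordq$-root of unity and $\ordq\nmid(j+1)$ — this is exactly the hypothesis, applied to each $j\in\supp(f)$. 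Hence the entire sum collapses and $wu^{\ordq}=u^{\ordq}w$.

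Since $B_q(f)$ is generated by $u,v,w$, an element commuting with all three generators is central, so $u^{\ordq}\in Z(B_q(f))$, and the same argument with $v$ in place of $u$ (or the aforementioned isomorphism) gives $v^{\ordq}\in Z(B_q(f))$. There is essentially no obstacle here: the only thing to be careful about is that the denominator $q^{j+1}-1$ in $[\ordq]_{q^{j+1}}$ is genuinely nonzero — if $\ordq\mid(j+1)$ for some $j\in\supp(f)$ the corresponding term need not vanish (it becomes $\ordq\cdot c_j v^j u^{\ordq-1}$), which is exactly why that case is excluded in the statement. One should also note the edge case $f=0$ is subsumed (empty support, nothing to check) and that $\ordq>1$ is implicitly used only insofar as $\ordq=1$ would force $q=1$ and $u,v$ to already commute trivially.
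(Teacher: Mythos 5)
Your proposal is correct and follows essentially the same route as the paper: it applies Lemma~\ref{lem.cnt1} with $k=\ordq$ and observes that the hypothesis $\ordq\nmid(j+1)$ makes each $q$-number $[\ordq]_{q^{\pm(j+1)}}$ vanish (nonzero denominator, zero numerator), so the correction terms disappear. The only difference is that you spell out the commutation with $u$ and $v$ and the symmetry explicitly, which the paper leaves implicit.
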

\begin{proof} 
Let $j \in \supp(f)$. Since $\ordq\nmid(j+1)$, then $q^{j+1} \neq 1$ but $(q^{j+1})^n=1$. Hence, $[n]_{q^{j+1}}=0$ and $[n]_{q^{-(j+1)}}=0$. The result now follows by Lemma \ref{lem.cnt_id1}.
\end{proof}

\begin{remark}
The assumption $\ordq \nmid (j+1)$ for all $j\in\supp(f)$ is essential. For example, if $f(t)=t^8$ and $q^3=1$, then the sum in \eqref{eq.comm1} becomes
\[ \sum_{i=0}^{k-1}q^{i(8+1)}v^8u^{k-1}=kv^8u^{k-1}\neq0.\]
However, if $\kk$ is an algebraically closed field of characteristic $p>0$, then the sum becomes zero whenever $p\mid k$.
\end{remark}

\begin{lemma}
Suppose $f=t^d$. Then for $k\geq 1$,
\begin{align}
\label{eq.wku}
    w^k u &= \sum_{i=0}^k \binom{k}{i}_{q^{d+1}} \sigma^{k-i}\delta^i(u) w^{k-i}\\
\label{eq.wkv}
    w^k v &= \sum_{i=0}^k \binom{k}{i}_{q^{-(d+1)}} \sigma^{k-i}\delta^i(v) w^{k-i}.
\end{align}
\end{lemma}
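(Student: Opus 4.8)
The plan is to prove the two commutation identities \eqref{eq.wku} and \eqref{eq.wkv} by induction on $k$, treating them as instances of the general Ore-extension formula for moving a power of the outer variable past an element of the base ring. Recall from Lemma \ref{lem.ore} that $B_q(t^d) = \kk_q[u,v][w;\sigma,\delta]$ with $\sigma(u)=qu$ and $\delta(u)=f(v)=v^d$. The key structural fact is that $\sigma$ and $\delta$ \emph{$q^{d+1}$-commute on the generator $u$} in the appropriate sense; more precisely, for any element $a$ in the image of the relevant subalgebra one has $\delta\sigma(u) = \delta(qu) = q\delta(u)$ while $\sigma\delta(u) = \sigma(v^d) = q^{-d}v^d = q^{-d}\delta(u)$, so $\sigma\delta(u) = q^{-(d+1)}\delta\sigma(u)$, equivalently $\delta\sigma(u) = q^{d+1}\sigma\delta(u)$. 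This is exactly the hypothesis needed for the $q$-binomial version of the ``Ore power formula,'' and the parameter $q^{d+1}$ is what appears in the Gaussian binomial coefficients.

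First I would prove \eqref{eq.wku}. The base case $k=1$ is just the defining relation $wu = \sigma(u)w + \delta(u)$, which matches the right-hand side since $\binom{1}{0}_{q^{d+1}}=\binom{1}{1}_{q^{d+1}}=1$. For the inductive step, I would compute $w^{k+1}u = w\,(w^k u)$ by substituting the inductive hypothesis, then push the single $w$ on the left past each term $\sigma^{k-i}\delta^i(u)\,w^{k-i}$ using $w\cdot b = \sigma(b)w + \delta(b)$ for $b = \sigma^{k-i}\delta^i(u)$. This produces two families of terms, one with a factor $\sigma^{k+1-i}\delta^i(u)$ and one with $\sigma^{k-i}\delta^{i+1}(u)$; reindexing the second family and collecting yields a coefficient of $\sigma^{k+1-i}\delta^i(u)w^{k+1-i}$ equal to $\binom{k}{i}_{q^{d+1}}$ times a power of $q^{d+1}$ coming from $\sigma$ acting on the $i$-fold $\delta$-image, plus $\binom{k}{i-1}_{q^{d+1}}$. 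The Pascal-type recursion $\binom{k}{i}_{p}\,p^{?} + \binom{k}{i-1}_{p} = \binom{k+1}{i}_{p}$ for $p = q^{d+1}$ then closes the induction, provided the exponent of $q^{d+1}$ produced by commuting $\sigma$ past $\delta^i$ is exactly what the $q$-Pascal identity demands. The second identity \eqref{eq.wkv} is entirely parallel with $q$ replaced by $q^{-1}$ throughout, hence $q^{d+1}$ replaced by $q^{-(d+1)}$, and $\delta(v)=u^d$, $\sigma(v)=q^{-1}v$.

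The main obstacle — really the only nonroutine point — is verifying that $\sigma$ and $\delta$ really do satisfy the clean skew-commutation $\sigma\delta^i(u) = q^{-(d+1)i}\,\delta^i\sigma(u)\cdot(\text{correction})$, or rather that the coefficient bookkeeping matches the $q$-binomial recursion with base $q^{d+1}$ rather than some other parameter. Concretely, I need to track what $\sigma^{k-i}\delta^i(u)$ actually is: from Lemma \ref{lem.cnt_id1}, iterating $\delta$ on $u$ keeps producing monomials $v^{aj}u^{b}$-type terms with $q$-number coefficients, and $\sigma$ multiplies such a monomial by a power of $q$ determined by its $u$- and $v$-degrees. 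I would record once and for all that $\delta^i(u)$ is (up to a nonzero $q$-number scalar) a scalar multiple of a single monomial when $f=t^d$ — in fact $\delta^i(u)$ is supported on $v^{di}u^{?}$ minus lower corrections, but what matters is its total degree for computing how $\sigma$ scales it — and confirm that $\sigma\bigl(\delta^i(u)\bigr) = q^{-(d+1)}\delta^i\bigl(\sigma(u)\bigr)/q^{?}$ so that the induction produces precisely $q^{(d+1)(k-i)}$ or the dual as the multiplier on the inductive term. Once that single scaling identity is nailed down, the rest is the standard $q$-Vandermonde/$q$-Pascal manipulation, and I would present the argument as ``this is the general Ore power formula specialized to our $\sigma,\delta$,'' citing the computation in Lemma \ref{lem.cnt_id1} for the needed $\sigma$-$\delta$ interaction, and leaving the $q$-Pascal step as a one-line remark.
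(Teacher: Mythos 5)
Your strategy is the same as the paper's: verify a skew-commutation between $\sigma$ and $\delta$ and then run the $q$-binomial power formula for Ore extensions (the paper cites \cite[6.2]{G}; you propose to reprove it via the $q$-Pascal induction). However, the point you flag as ``the only nonroutine point'' --- that $\sigma$ and $\delta$ satisfy the clean relation $\delta\sigma=q^{d+1}\sigma\delta$ on all the iterated images $\sigma^{j}\delta^{i}(u)$ --- is exactly where the argument breaks, and it cannot be repaired: the relation holds on $u$ itself but already fails on $\delta(u)=v^{d}$. Since $\delta(v^{d})=[d]_{q^{-(d+1)}}u^{d}v^{d-1}$ and $\sigma$ scales a monomial $u^{a}v^{b}$ by $q^{a-b}$, one has
\begin{align*}
\delta\sigma(v^{d})=q^{-d}\,\delta(v^{d}),
\qquad
q^{d+1}\,\sigma\delta(v^{d})=q^{d+1}\cdot q\,\delta(v^{d})=q^{d+2}\,\delta(v^{d}),
\end{align*}
and these agree only when $q^{2(d+1)}=1$. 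The underlying reason is that $\delta$ replaces a $u$ by $v^{d}$ and a $v$ by $u^{d}$, shifting the $\sigma$-eigenvalue by $q^{-(d+1)}$ or $q^{+(d+1)}$ according to which letter it hits, so no single skew-commutation parameter works on the whole orbit. Your auxiliary claim that $\delta^{i}(u)$ is a scalar times a single monomial is also false for $i\geq 3$ when $d\geq 2$: $\delta^{3}(u)$ contains both $u^{2d}v^{d-2}$ and $u^{d-1}v^{2d-1}$.

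Consequently the induction does not close at the step $\delta\bigl(\sigma^{k-i}\delta^{i}(u)\bigr)=q^{(d+1)(k-i)}\sigma^{k-i}\delta^{i+1}(u)$ for $i\geq 1$, and in fact the identity \eqref{eq.wku} fails at $k=3$ for generic $q$. For $d=1$ a direct computation gives $w^{3}u=q^{3}uw^{3}+(q^{2}+1+q^{-2})vw^{2}+(2q+q^{-1})uw+v$, whereas the $i=2$ term of \eqref{eq.wku} would contribute $[3]_{q^{2}}\,\sigma\delta^{2}(u)\,w=(q+q^{3}+q^{5})uw$; these agree only if $q^{4}=1$. The same obstruction ($q^{2(d+1)}=1$) appears for every $d\geq 2$. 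You should be aware that the paper's own proof makes the identical leap, since \cite[6.2]{G} requires $\delta\sigma=q^{d+1}\sigma\delta$ as an operator identity, not merely its validity on the single element $u$. The statement actually used downstream, \eqref{rewr1}, is a different identity that holds for all $f$ and follows from a one-step induction, $w^{k+1}u=q^{k}(wu)w^{k}+\sum_{i}w^{k-i}f(v)(qw)^{i}$, with no Gaussian binomial coefficients; that is the correct target to prove.
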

\begin{proof}
We have
\begin{align*}
q^{d+1}\sigma\delta(u) 
    &= q^{d+1}\sigma(v^d) 
    = q^{d+1}(q^{-d} v^d) 
    = qv^d 
    = \delta(qu) 
    = \delta\sigma(u) \\
q^{-(d+1)}\sigma\delta(v)
    &= q^{-(d+1)}\sigma(u^d)  
    = q^{-(d+1)}(q^d u^d)
    = q\inv u^d
    = \delta(q\inv v)
    = \delta(\sigma(v)).
\end{align*}
Since $\delta\sigma(u) = q^{d+1}\sigma\delta(u)$, then when restricting to the action on $u$, we have that $(\sigma,\delta)$ is a $q^{d+1}$-derivation. Similarly, for the action on $v$, $(\sigma,\delta)$ is a $q^{-(d+1)}$-derivation. The result now follows from \cite[6.2]{G}.
\end{proof}

\begin{remark} In the next lemma, we will use a reformulation of the previous lemma, namely for any $k\geq1$, and any polynomial $f(t)$,
\begin{align}\label{rewr1}w^ku=q^kuw^k+\sum_{i=0}^{k-1}w^{k-1-i}f(v)(qw)^i.\end{align}
\end{remark}

\begin{lemma}\label{lem.wpwr}
Assume that $q$ is a primitive $\ordq$-root of unity with $\ordq>1$. Then $f(u),f(v)\in Z(B_q(f))$ if and only if $\ordq\mid j$ for all $j\in\supp(f)$. Consequently, if $\ordq\mid j$ for all $j\in\supp(f)$, then $w^{\ordq}\in Z(B_q(f))$.
\end{lemma}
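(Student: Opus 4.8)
The plan is to establish the forward and reverse directions of the stated equivalence separately, then derive the corollary about $w^{\ordq}$. For the reverse direction, suppose $\ordq \mid j$ for all $j \in \supp(f)$. I would show $f(v)$ commutes with $u$, $v$, and $w$. Commutativity with $v$ is automatic since $f(v)$ is a polynomial in $v$. For commutativity with $u$: since each exponent $j$ occurring in $f$ is divisible by $\ordq$ and $q$ is a primitive $\ordq$-root of unity, we have $q^j = 1$ for all such $j$, so $f(v)$ is central in the quantum plane $\kk_q[u,v]$ (indeed $u v^j = q^j v^j u = v^j u$). For commutativity with $w$, use Lemma \ref{lem.cnt1}, specifically \eqref{eq.comm2}: since $q^{-(j+1)} \neq 1$ (as $\ordq \nmid j+1$, because $\ordq \mid j$ and $\ordq > 1$) but $(q^{-(j+1)})^{\ordq} = 1$, applying the commutation relation $w v^j$ and more generally computing $w \cdot f(v)$ versus $f(v) \cdot w$ reduces to checking the coefficients; alternatively, one reduces directly from $wv = q^{-1}vw - f(u)$ together with the fact that $f(u)$ is likewise central in $\kk_q[u,v]$. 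The symmetric argument gives $f(u) \in Z(B_q(f))$.

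For the forward direction, suppose $f(v) \in Z(B_q(f))$; I want to conclude $\ordq \mid j$ for every $j \in \supp(f)$. The cleanest approach is to exhibit an obstruction from a single commutator. Consider $[u, f(v)] = u f(v) - f(v) u = \sum_{j \in \supp(f)} c_j (q^j - 1) v^j u$ (using $u v^j = q^j v^j u$), and use that $\{v^j u : j \in \NN\}$ is part of the PBW basis from Lemma \ref{lem.ore}, so this vanishes iff $c_j(q^j-1)=0$ for all $j$, i.e. $q^j = 1$, i.e. $\ordq \mid j$, for all $j \in \supp(f)$. Thus centrality of $f(v)$ already forces the divisibility condition, and likewise for $f(u)$. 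This makes the forward direction essentially immediate once the PBW basis is invoked.

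For the consequence, assume $\ordq \mid j$ for all $j \in \supp(f)$, so by what we just proved $f(u), f(v) \in Z(B_q(f))$. Use the reformulation \eqref{rewr1}: for any $k \geq 1$,
\[
w^k u = q^k u w^k + \sum_{i=0}^{k-1} w^{k-1-i} f(v) (qw)^i.
\]
Setting $k = \ordq$, the leading term has coefficient $q^{\ordq} = 1$, giving $w^{\ordq} u = u w^{\ordq} + \sum_{i=0}^{\ordq-1} q^i w^{\ordq-1} f(v)$, where I have pulled the central element $f(v)$ to the right and collected powers of $w$. The sum $\sum_{i=0}^{\ordq-1} q^i = [\ordq]_q = 0$ since $q$ is a primitive $\ordq$-root of unity with $\ordq > 1$. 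Hence $w^{\ordq} u = u w^{\ordq}$. The symmetric computation using the $v$-analogue of \eqref{rewr1} gives $w^{\ordq} v = v w^{\ordq}$, and $w^{\ordq}$ trivially commutes with $w$, so $w^{\ordq} \in Z(B_q(f))$.

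The main obstacle I anticipate is bookkeeping in the "consequence" step: one must be careful that $f(v)$, though central, interleaves with the powers of $w$ correctly so that the sum collapses to a scalar multiple of $w^{\ordq-1} f(v)$ — this works precisely because centrality lets every $f(v)$ slide freely past every $w$, converting the messy sum $\sum_i w^{\ordq-1-i} f(v) (qw)^i$ into $\left(\sum_i q^i\right) w^{\ordq-1} f(v)$. Everything else is a short PBW-basis computation or a direct appeal to Lemma \ref{lem.cnt1}.
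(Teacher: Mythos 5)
Your proposal is correct and follows essentially the same route as the paper: the forward direction via the commutator of $f(v)$ (resp. $f(u)$) with $u$ (resp. $v$) and linear independence of PBW monomials, the reverse direction via the vanishing of the $q$-integers $[k]_{q^{\pm(j+1)}}$ for $k\in\supp(f)$ (the paper packages this as $\delta(f(u))=\delta(f(v))=0$ using Lemma \ref{lem.cnt_id1}), and the centrality of $w^{\ordq}$ via \eqref{rewr1} and $[\ordq]_q=0$ exactly as in the paper. The only blemish is a sign typo in your parenthetical alternative ($wv=q\inv vw+f(u)$, not $-f(u)$), which does not affect the main argument.
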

\begin{proof} 
Write $f$ as in \eqref{eq.fform}.
If $f(u),f(v)\in Z(B_q(f))$, then $f(u)v=vf(u)$. But the relation $uv=qvu$ implies that $f(u)v=vf(qu)$. Since $B_q(f)$ is a domain, $f(u)=f(qu)$, and so for each $c_j\neq0$, linear independence implies $q^j-1=0$, hence $\ordq\mid j$ for all $j\in\supp(f)$.

Conversely, assume that $\ordq\mid j$ for all $j\in\supp(f)$. Then $f(qu)=f(q^{-1}u)=f(u)$ and $f(q^{-1}v)=f(qv)=f(v)$, hence $uf(v)=f(q^{-1}v)u=f(v)u$ and $f(u)v=vf(qu)=vf(u)$. By Lemma \ref{lem.cnt_id1},
\[
\delta(f(u))
    = \sum_{j=0}c_j\delta(u^j)    
    = \sum_{j=0}^dc_j\left(\sum_{i=0}^d[j]_{q^{i+1}}c_iv^i\right)u^{j-1}.
\]
Since $\ordq=\ord(q)$ and $n \mid j$, then $[j]_{q^{i+1}} = 0$ for all $i$. We have shown that $\delta(f(u))=0$ and similarly $\delta(f(v))=0$. Therefore, $wf(u)=f(qu)w+\delta(f(u))=f(u)w$, whence $f(u)\in Z(B_q(f))$ and similarly $f(v)\in Z(B_q(f))$.

We continue to assume that $\ordq \mid j$ for all $j\in\supp(f)$. By the above, this is equivalent to assuming $f(u),f(v)\in Z(B_q(f))$. We claim that $w^{\ordq}$ is central. We apply equation \ref{rewr1} for all $k\geq1$, using the assumption $f(v)\in Z(B_q(f))$, we have that
\begin{align*}
w^{\ordq}u
    &=q^{\ordq}uw^{\ordq}+\sum_{i=0}^{\ordq-1}q^iw^{\ordq-1-i}f(v)w^i
    =uw^\ordq+\left(\sum_{i=0}^{\ordq-1}q^i\right)f(v)w^{\ordq-1}\\
    &=uw^\ordq+[\ordq]_{q}f(v)w^{\ordq-1}
    =uw^\ordq.
\end{align*}
Similarly, using $f(u)\in Z(B_q(f))$ shows that $w^nv = vw^n$, proving our claim.
\end{proof}

\begin{remark}
If we remove the hypothesis $\ordq \mid j$ from Lemma \ref{lem.wpwr}, then it is not true that $w^\ordq$ is always central. In the case that $\ordq=4$ and $f=t^2$, then one checks that $w^4$ is not central, contradicting \cite[Lemma 1.7]{CGWZ3}. Instead, we have
\[ g = w^4 + 2(1-q) \Omega w \in Z(B_q(f)).\]

It remains true that, for $q$ a primitive root of unity of order $\ordq \neq 1,3$, $B_q(t^2)$ is module-finite over its center. Moreover, the ozone computation in \cite{CGWZ3} do not use the centrality of $w^\ordq$. See Proposition \ref{prop.PI} below.
\end{remark}

\par We now establish another canonical central element of $B_q(f)$.

\begin{lemma}\label{lem.inner_deriv}
Let $X$ be the Ore set of $B_q(f)$ generated by powers of $u$ and $v$. Suppose the order of $q$ does not divide $j+1$ for all $j \in \supp(f)$. Then $\delta$ is an inner $\sigma$-derivation on $\widehat{B_q(f)} = (B_q(f))X\inv$.
\end{lemma}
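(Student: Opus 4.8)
The plan is to exhibit an explicit element $\Theta$ in the localization $\widehat{B_q(f)}$ such that $\delta(a) = \Theta a - \sigma(a)\Theta$ for all $a \in \kk_q[u,v]$ (the sign convention matching the $\sigma$-derivation identity $\delta(aa') = \sigma(a)\delta(a') + \delta(a)a'$), which is exactly the statement that $\delta$ is inner. Since $\delta$ and $\sigma$ are determined by their values on the generators $u$ and $v$, it suffices to produce $\Theta$ with $\delta(u) = \Theta u - qu\Theta$ and $\delta(v) = \Theta v - q\inv v\Theta$, and then check compatibility with the single defining relation $uv = qvu$ — but that compatibility is automatic once $\Theta$ is a well-defined element of $\widehat{B_q(f)}$, because $\delta$ is already known to be a $\sigma$-derivation by Lemma~\ref{lem.ore}.

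First I would guess the shape of $\Theta$. Because $\delta(u) = f(v) = \sum_j c_j v^j$ and $\delta(v) = f(u) = \sum_j c_j u^j$, and because $u,v$ are being inverted, the natural candidate is a sum of two pieces: one built from $f(v)u\inv$-type terms and one from $f(u)v\inv$-type terms. Concretely, working term by term in $j \in \supp(f)$, I would look for $\Theta = \sum_j c_j\bigl(\alpha_j v^j u\inv + \beta_j u^j v\inv\bigr)$ and solve for the scalars $\alpha_j, \beta_j$ by imposing the two generator conditions. Using the commutation rule $u\inv v = q\inv v u\inv$ (and its variants) to move $u\inv$ past $v^j$, the condition on $u$ forces an equation of the form $(\text{something})\alpha_j = 1$ whose coefficient is, up to a unit, $[\,\cdot\,]_{q^{j+1}}$-like, i.e.\ involves $q^{j+1} - 1$; this is where the hypothesis $\ordq \nmid (j+1)$ enters, guaranteeing $q^{j+1} \neq 1$ so the scalar is invertible and $\alpha_j$ exists. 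Symmetrically, the condition on $v$ determines $\beta_j$, again requiring $q^{j+1} \neq 1$ (equivalently $q^{-(j+1)} \neq 1$). One then checks that with these choices the cross terms (the $\alpha_j v^j u\inv$ part tested against the $v$-condition, and vice versa) either vanish or are absorbed correctly — this is the one genuine computation, and it should come out cleanly because of the antisymmetry between the $u$- and $v$-relations.

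The main obstacle I anticipate is bookkeeping with the twisting: every time $u\inv$ or $v\inv$ is commuted past a power of the other generator one picks up a power of $q$, and one must track these carefully to see that the $q^{j+1}-1$ (resp.\ $q^{-(j+1)}-1$) denominators are exactly what appears, and that nothing forces a division by $[n]_{q}$ or by $q-1$ in a bad case. It is worth noting that the computation parallels Lemma~\ref{lem.cnt1}: there $wu^k = q^k u^k w + (\text{lower})$, and conjugation by $\Theta$ on $\widehat{B_q(f)}$ should reproduce precisely the $\delta$-part of that formula, which is a useful sanity check. Once $\Theta$ is verified on generators, I would remark that $w - \Theta$ is then central in $\widehat{B_q(f)}$: indeed $w - \Theta$ commutes with $u$ and with $v$ by construction, and it commutes with $w$ trivially, so it centralizes all of $\widehat{B_q(f)}$. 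This last observation is presumably the point of the lemma and the bridge to the central-element computations that follow.
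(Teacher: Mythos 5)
Your construction is correct and is essentially the paper's proof: the paper takes $\gamma = u\inv\bigl(\sum_j \alpha_j c_j v^j\bigr) - v\inv\bigl(\sum_j \beta_j c_j u^j\bigr)$ with $\alpha_j=(q^{-j}-q)\inv$ and $\beta_j=(q\inv-q^j)\inv$, which is exactly your ansatz after commuting the inverses past the powers of the other variable and absorbing the resulting units of $q$ and the sign into the scalars; the hypothesis $\ord(q)\nmid(j+1)$ enters precisely where you predict (to make $1-q^{j+1}$ invertible), the cross terms do vanish as you anticipate, and your observation that agreement on the generators suffices (two $\sigma$-derivations agreeing on $u$ and $v$ agree everywhere) is sound. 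One correction to your closing remark, though: $w-\Theta$ is \emph{not} central in $\widehat{B_q(f)}$ when $q\neq 1$ — from $wa=\sigma(a)w+\delta(a)$ and $\Theta a=\sigma(a)\Theta+\delta(a)$ one gets $(w-\Theta)a=\sigma(a)(w-\Theta)$, so $w-\Theta$ is a normal, $\sigma$-commuting element; this is what the paper actually uses, writing $\widehat{B_q(f)}=\kk_q[u^{\pm1},v^{\pm1}][w-\gamma;\sigma]$, and the central element extracted in the sequel is $\Omega=uv(w-\gamma)$, not $w-\gamma$ itself.
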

\begin{proof}
There is a unique extension of $\sigma$ and $\delta$ to $(B_q(f))X\inv$, which by an abuse of notation we also refer to as $\sigma$ and $\delta$, respectively. Then $\widehat{B_q(f)} = \kk_q[u^{\pm 1},v^{\pm 1}][w;\sigma,\delta]$. 

Suppose $\deg f = \degf$ and write $f$ as in \eqref{eq.fform}. For each $j \in \supp(f)$, set $\alpha_j = (q^{-j} - q)\inv$ and $\beta_j=(q\inv-q^j)\inv$. By hypothesis, $\alpha_j$ and $\beta_j$ are well-defined for all $j \in \supp(f)$. Define
\[
    \gamma = u\inv \left( \sum_{j=0}^{\degf} \alpha_j c_j v^j \right)
        - v\inv \left( \sum_{j=0}^{\degf} \beta_j c_j u^j \right).
\]
We have
\begin{align*}
\gamma u-\sigma(u)\gamma 
    &= u\inv \left( \sum_{j=0}^{\degf} \alpha_j c_j v^j \right)u
        - v\inv \left( \sum_{j=0}^{\degf} \beta_j c_j u^j \right)u \\
    &\hspace{5em}
        - q \left( \sum_{j=0}^{\degf} \alpha_j c_j v^j \right)
        + qu v\inv \left( \sum_{i=0}^{\degf} \beta_j c_j u^j \right) \\
    &= \left( \sum_{j=0}^{\degf} \alpha_j q^{-j} c_j v^j \right)
        - v\inv u \left( \sum_{j=0}^{\degf} \beta_j c_j u^j \right) \\
   &\hspace{5em}    
        - q \left( \sum_{j=0}^{\degf} \alpha_j c_j v^j \right)
        + v\inv u \left( \sum_{j=0}^{\degf} \beta_j c_j u^j \right) \\
    &= \sum_{i=0}^{\degf} \alpha_j (q^{-j}-q) c_j v^j
    = f(v).
\end{align*}
The proof that $\gamma v-\sigma(v)\gamma=f(u)$ is similar.
\end{proof}

In $\widehat{B_q(f)}$, set $w'=w-\gamma$ and 
\begin{align}\label{eq.Omega}
\Omega = uvw' = uvw + \left( \sum_{j=0}^{\degf} \beta_j c_j u^{j+1} \right)
    - q\left( \sum_{j=0}^{\degf} \alpha_j c_j v^{j+1} \right)
\in B_q(f)
\end{align}

\begin{proposition}\label{prop.cnt}
The element $\Omega$ is central in $B_q(f)$. 
\begin{enumerate}
    \item If $q=1$, then $Z(B_1(f))=\kk[uf(u)-vf(v)]$.
    \item If $q$ is not a root of unity, then $Z(B_q(f))=\kk[\Omega]$.
    \item If $q$ is a primitive $\ordq$-root of unity and $\ordq\nmid(j+1)$ for all $j\in\supp(f)$, then $u^\ordq,v^\ordq,\Omega\in Z(B_q(f))$.
\end{enumerate}
\end{proposition}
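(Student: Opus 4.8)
The plan is to exploit the localization $\widehat{B_q(f)}=B_q(f)X\inv$ of Lemma~\ref{lem.inner_deriv}, which is available precisely when $\ord(q)\nmid(j+1)$ for all $j\in\supp(f)$ --- that is, in cases~(2) and~(3). There $\delta=\operatorname{ad}\gamma$, so $w'=w-\gamma$ satisfies $w'a=\sigma(a)w'$ for every $a\in\kk_q[u^{\pm1},v^{\pm1}]$, and $\sigma(uv)=qu\cdot q\inv v=uv$. From this I would check in a few lines that $\Omega=uvw'$ commutes with $u$, with $v$, and with $w'$ (hence with $\gamma$, hence with $w=w'+\gamma$); since $\Omega\in B_q(f)$ by \eqref{eq.Omega}, this proves the opening assertion ($\Omega\in Z(B_q(f))$ whenever it is defined). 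Part~(3) is then immediate, its hypotheses being exactly those of Lemma~\ref{lem.uv}, which already supplies $u^\ordq,v^\ordq\in Z(B_q(f))$.

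\textbf{Part~(2).} To get the reverse inclusion $Z(B_q(f))\subseteq\kk[\Omega]$, I would compute the center of $\widehat{B_q(f)}=\kk_q[u^{\pm1},v^{\pm1}][w';\sigma]$ and pull it back. This algebra is $\ZZ^2\times\NN$-graded by $(u,v,w')$-degree, with homogeneous relations $uv=qvu$, $w'u=quw'$, $w'v=q\inv vw'$, so its center is a graded subspace and hence spanned by the central monomials $u^av^bw'^c$; centrality amounts to $q^{c-b}=q^{a-c}=1$, which forces $a=b=c$ because $q$ is not a root of unity. Thus $Z(\widehat{B_q(f)})=\kk[uvw']=\kk[\Omega]$ ($w'$, hence $\Omega$, not being a unit). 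Since any $z\in Z(B_q(f))$ commutes with $u^{\pm1},v^{\pm1},w$, it commutes with all of $\widehat{B_q(f)}$, whence $Z(B_q(f))\subseteq Z(\widehat{B_q(f)})=\kk[\Omega]$; equality follows from $\Omega\in Z(B_q(f))$.

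\textbf{Part~(1).} Here $B_1(f)=\kk[u,v][w;\delta]$ with $\delta=f(v)\partial_u+f(u)\partial_v$ a derivation of $\kk[u,v]$ (as $\sigma=\id$). Writing a central element as $\sum_{i=0}^m r_iw^i$ with $r_i\in\kk[u,v]$ and commuting first with $w$ (forcing $\delta(r_i)=0$) and then with $u$ (the $w^{m-1}$-coefficient of the commutator is $m\,r_m\,f(v)$, nonzero when $m\geq1$, $f\neq0$, $\operatorname{char}\kk=0$) reduces everything to showing $(\kk[u,v])^\delta=\kk[G]$, where $G=F(u)-F(v)$ is the first integral of $\delta$ --- and $G$ is a scalar multiple of $uf(u)-vf(v)$ exactly in the monomial case $f=t^\degf$. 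The inclusion $\kk[G]\subseteq\ker\delta$ is the one-line identity $\delta(F(u)-F(v))=f(u)f(v)-f(v)f(u)=0$. For the reverse inclusion I would first handle $f=t^\degf$, where $\delta$ is homogeneous, $\ker\delta$ is graded, and a direct coefficient recursion shows $(\ker\delta)_m$ is one-dimensional exactly when $(\degf+1)\mid m$, giving $\ker\delta=\kk[u^{\degf+1}-v^{\degf+1}]$; for non-monomial $f$ of degree $\degf\geq2$, pass to $\gr B_1(f)\iso B_1(t^\degf)$, observe that the leading form of any $\delta$-constant is a scalar times a power of the leading form of $G$, subtract the corresponding multiple of that power of $G$, and induct on degree. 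The low-degree cases $\degf\leq1$ are handled separately: $B_1(c)$ is a polynomial extension of the Weyl algebra as in Example~\ref{ex.weyl}, and a linear $f$ is turned into a monomial by the shift isomorphism of Lemma~\ref{lem.iso2}.

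\textbf{Main obstacle.} Almost everything here is formal once the inner-derivation description of Lemma~\ref{lem.inner_deriv} is in hand; the one genuinely computational step --- and the place I expect to spend the most care --- is the identification of $\ker\delta$ in part~(1), and even that is routine after the reduction to $f=t^\degf$. I would also flag the tacit hypothesis $f\neq0$ in part~(1), since $B_1(0)=\kk[u,v,w]$ is commutative.
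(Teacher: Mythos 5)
Your treatment of the centrality of $\Omega$ and of parts (2) and (3) follows essentially the paper's route: both pass to the localization $\widehat{B_q(f)}=\kk_q[u^{\pm1},v^{\pm1}][w';\sigma]$ of Lemma \ref{lem.inner_deriv} and read off the center there. The only real difference in those parts is that for (3) the paper quotes the description of the center of a CY skew polynomial ring from \cite[Example 5.2(1)]{CGWZ2}, whereas you get $u^\ordq,v^\ordq$ from Lemma \ref{lem.uv} and check $\Omega=uvw'$ directly; both are fine, and your monomial-by-monomial computation of $Z(\widehat{B_q(f)})$ in part (2) is exactly the ``easy to check'' step the paper leaves implicit. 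For part (1) you diverge more substantially: the paper asserts $Z(B_1(f))=\kk[u,v]^\delta$ as well known and cites \cite[Proposition 6.9.3]{nowicki}, while you prove the reduction by hand (the coefficient of $w^{m-1}$ in $[z,u]$ being $m\,r_m f(v)$ is correct and does force $z\in\kk[u,v]$, using $f\neq0$ and characteristic zero) and then compute $\kk[u,v]^\delta$ directly. The one step you leave as a sketch is the identification $\ker(v^\degf\partial_u+u^\degf\partial_v)=\kk[u^{\degf+1}-v^{\degf+1}]$; that is where the actual content of the Nowicki citation lives, so a self-contained proof requires writing out that recursion (routine but not free).

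More importantly, your computation exposes a problem with the statement itself, and you are right about it: the generating $\delta$-constant is $F(u)-F(v)$ with $F'=f$, not $uf(u)-vf(v)$. Indeed $\delta(uf(u)-vf(v))=uf'(u)f(v)-vf'(v)f(u)$, which vanishes identically only when $uf'(u)$ is proportional to $f(u)$, i.e.\ only when $f$ is a monomial; for $f=t+1$ one gets $\delta(uf(u)-vf(v))=u-v\neq0$, so $uf(u)-vf(v)$ is not even central in $B_1(t+1)$. Thus part (1) as printed should read $Z(B_1(f))=\kk[F(u)-F(v)]$ (which agrees with $\kk[uf(u)-vf(v)]$ exactly when $f=ct^\degf$), and the hypothesis $f\neq0$ you flag is also genuinely needed, since $B_1(0)$ is commutative. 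Your proposal proves the corrected statement; there is no gap in your argument beyond the sketched kernel computation noted above.
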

\begin{proof}
Suppose $q=1$. Then it is well-known that $Z(B_1(f)) = \kk[u,v]^\delta$. By \cite[Proposition 6.9.3]{nowicki}, since $f(u)$ and $f(v)$ are coprime, then $\kk[u,v]^\delta = \kk[uf(u)-vf(v)]$. This proves (1).

By Lemma \ref{lem.inner_deriv},
\[ \widehat{B_q(f)} = \kk_q[u^{\pm 1},v^{\pm 1}][w';\sigma].\]
Let $S=\kk_\bp[u_1,u_2,u_3]$ be the skew polynomial ring with 
parameters $p_{12}=q\inv$, $p_{13} = q$, $p_{23}=q\inv$. Let 
$Y$ be the Ore set of $S$ generated by powers of $u_1$ and $u_2$, 
and set $\widehat{S}=SY\inv$. Then it is clear that 
$\widehat{S} \iso \widehat{B_q(f)}$ via the isomorphism
$u_1 \mapsto u$, $u_2 \mapsto v$, and $u_3 \mapsto w'$.

If $q$ is not a root of unity, then it is easy to check that $Z(S)=Z(\widehat{S}) =\kk[\Omega]$. Since $Z(B_q(f)) = Z(\widehat{B_q(f)}) \cap B_q(f)$. Clearing fractions gives (2).

Now suppose that $q$ is a primitive $\ordq$-root of unity and $\ordq\nmid(j+1)$ for all $j\in\supp(f)$. Again, since $S$ is CY, then $Z(S)$ is generated by $u_1^n$, $u_2^n$, $u_3^n$, and $u_1u_2u_3$ \cite[Example 5.2(1)]{CGWZ2}. Hence, $Z(\widehat{S})$ is generated by the same elements along with appropriate inverses. Again using $Z(B_q(f)) = Z(\widehat{B_q(f)}) \cap B_q(f)$, it follows that $u^n$, $v^n$, and $uvw'=\Omega$ are all central in $B_q(f)$.
\end{proof}

The next result shows that, under our standard hypotheses, $B_q(f)$ is module-finite over its center.

\begin{proposition}\label{prop.PI}
Let $q$ be a primitive $\ordq$-root of unity and $\ordq\nmid(j+1)$ for all $j\in\supp(f)$. Then 
\begin{enumerate}
    \item $B_q(f)$ is PI, and
    \item $B_q(f)$ is module-finite over its affine center.
\end{enumerate}
\end{proposition}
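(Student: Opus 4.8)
The plan is to extract both statements from the presentation of $\widehat{B_q(f)}$ used in the proof of Proposition~\ref{prop.cnt}. Write $B=B_q(f)$ and keep that notation: $X$ is the Ore set of $B$ generated by the powers of $u$ and $v$, $\widehat B=BX\inv=\kk_q[u^{\pm1},v^{\pm1}][w';\sigma]$, and there is an isomorphism $\widehat B\iso\widehat S=SY\inv$ where $S=\kk_\bp[u_1,u_2,u_3]$ is the skew polynomial ring of that proof and $u_1\mapsto u$, $u_2\mapsto v$, $u_3\mapsto w'$.

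For (1): since $q$ is a root of unity, every structure constant $p_{ij}$ of $S$ is a root of unity, so $S$ is a finitely generated module over the commutative polynomial subring $\kk[u_1^\ordq,u_2^\ordq,u_3^\ordq]$ (a basis being $\{u_1^au_2^bu_3^c:0\le a,b,c<\ordq\}$); in particular $S$ is PI. Ore localizations of PI rings are again PI, so $\widehat S$, hence $\widehat B$, is PI, and therefore so is the subalgebra $B\subseteq\widehat B$.

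For (2): $B$ is an iterated Ore extension of $\kk$, hence a Noetherian domain, and by (1) it is a prime, Noetherian, affine PI $\kk$-algebra. It suffices to produce an affine central subalgebra $C\subseteq Z(B)$ over which $B$ is module-finite; the Artin--Tate lemma then forces $Z(B)$ to be affine over $\kk$ and $B$ to be a finite $Z(B)$-module. We already know $u^\ordq,v^\ordq,\Omega\in Z(B)$ (Proposition~\ref{prop.cnt}(3)), and $\kk_q[u,v]=\bigoplus_{0\le a,b<\ordq}\kk[u^\ordq,v^\ordq]\,v^au^b$ is module-finite over $\kk[u^\ordq,v^\ordq]$. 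Since $B=\bigoplus_{k\ge0}\kk_q[u,v]\,w^k$ as a left $\kk_q[u,v]$-module, it is enough to exhibit a single central element $\Theta\in Z(B)$ whose expansion in the monomial basis has the form
\[
    \Theta = w^\ordq + \sum_{k<\ordq}g_k\,w^k,\qquad g_k\in\kk_q[u,v]
\]
(i.e.\ with $w$-leading term $w^\ordq$): modulo $\Theta$, centrality of $\Theta$ together with the Ore relations lets one rewrite every $w^m$ with $m\ge\ordq$ as a $\kk_q[u,v][\Theta]$-combination of $1,w,\dots,w^{\ordq-1}$, so that $B=\sum_{0\le a,b,k<\ordq}\kk[u^\ordq,v^\ordq,\Theta]\,v^au^bw^k$ is module-finite over the affine central subalgebra $C:=\kk[u^\ordq,v^\ordq,\Theta]$.

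To build $\Theta$ we work in $\widehat B$. Because $q^\ordq=1$ we have $\sigma^\ordq=\id$, so $w'^\ordq$ is central in $\widehat B$; indeed $w'^\ordq=q^{\binom\ordq2}(u^\ordq v^\ordq)\inv\Omega^\ordq\in Z(\widehat B)$, which is generated by $u^{\pm\ordq},v^{\pm\ordq},\Omega$ (computed in the proof of Proposition~\ref{prop.cnt}). Writing $w=w'+\gamma$ with $\gamma\in\kk_q[u^{\pm1},v^{\pm1}]$ the inner-derivation witness of Lemma~\ref{lem.inner_deriv} and expanding, $w'^\ordq=w^\ordq+\sum_{k<\ordq}c_k\,w^k$ with $c_k\in\kk_q[u^{\pm1},v^{\pm1}]$. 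One then corrects $w'^\ordq$ by a suitable element of $\kk[u^{\pm\ordq},v^{\pm\ordq}]\cdot\kk[\Omega]\subseteq Z(\widehat B)$ chosen to cancel the negative-degree parts of the $c_k$; the hypothesis $\ordq\nmid(j+1)$ for all $j\in\supp(f)$ is exactly what puts those parts in the required form, so that the corrected element lies in $B$ and has the displayed shape. (For $B_q(t)$ with $\ordq=3$ this yields $\Theta=w^3-w$, and for $B_q(t^2)$ with $\ordq=4$ it yields $\Theta=w^4+2(1-q)\Omega w$, as in the remark after Lemma~\ref{lem.wpwr}.)

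I expect this last step to be the only real obstacle. The deduction of (1) from the PI structure of $\widehat S$, the reduction of $w$-powers modulo $\Theta$, and the closing application of Artin--Tate (note $B$ is \emph{not} module-finite over $\kk[u^\ordq,v^\ordq,\Omega]$ alone, so $\Theta$ is genuinely needed) are all formal. What takes work is verifying that the poles in $u,v$ appearing in the coefficients $c_k$ of $w'^\ordq$ are killed by a \emph{central} Laurent correction --- equivalently, isolating the combinatorial condition on $\supp(f)$ under which this happens, which is the same mechanism underlying Lemma~\ref{lem.wpwr} and its failure when some $\ordq\mid(j+1)$.
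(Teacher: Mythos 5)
Your part~(1) is correct but follows a different route from the paper. You embed $B_q(f)$ into $\widehat{B_q(f)}\iso\widehat S$, note that the skew polynomial ring $S$ with root-of-unity parameters is module-finite over $\kk[u_1^\ordq,u_2^\ordq,u_3^\ordq]$ and hence PI, and pass back through the localization to the subring $B_q(f)$. The paper instead argues by contradiction: if $B=B_q(f)$ were not PI, then \cite[Corollary 2]{SmZ} would give $\GKdim B\geq 2+\GKdim Z(B)$, which is impossible since $\GKdim B=3$ and $\kk[u^\ordq,v^\ordq]\subseteq Z(B)$ forces $\GKdim Z(B)\geq 2$. Both arguments are valid; yours is arguably the more elementary and self-contained of the two.

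Part~(2), however, has a genuine gap, and it is precisely the step you flag as ``the only real obstacle.'' Your whole reduction hinges on the existence of a central element $\Theta=w^\ordq+\sum_{k<\ordq}g_kw^k$ with all $g_k\in\kk_q[u,v]$; without a unit leading coefficient in $w$ and pole-free lower coefficients, the rewriting of $w^m$ for $m\geq\ordq$ fails and, as you yourself observe, $B$ is not module-finite over $\kk[u^\ordq,v^\ordq,\Omega]$ alone, so Artin--Tate has nothing to act on. The candidate $w'^{\,\ordq}=q^{\binom{\ordq}{2}}(u^\ordq v^\ordq)\inv\Omega^\ordq$ is indeed central in $\widehat{B_q(f)}$, but its expansion $(w-\gamma)^\ordq$ has coefficients in $\kk_q[u^{\pm1},v^{\pm1}]$ with genuine poles (since $\gamma$ does), and the assertion that these polar parts can always be cancelled by an element of $\kk[u^{\pm\ordq},v^{\pm\ordq}][\Omega]$ under the hypothesis $\ordq\nmid(j+1)$ is a nontrivial claim that you state but do not prove; the two worked examples and the analogy with Lemma~\ref{lem.wpwr} (whose hypothesis $\ordq\mid j$ is much stronger than the standing one) do not substitute for an argument. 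The paper sidesteps this construction entirely: it filters $B$ so that $\gr B\iso\kk_q[u,v][w;\sigma]$, invokes \cite[Lemma 1 and Theorem 5]{VV} to conclude that $B$ is a maximal order, and then applies \cite[Propositions 9.8 and 9.11]{MR} (trace rings of affine noetherian prime PI maximal orders) to obtain module-finiteness over the affine center. If you wish to keep your approach, the existence of $\Theta$ is the theorem you still owe.
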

\begin{proof}
(1) Set $B=B_q(f)$ and $Z=Z(B)$. Now we prove that, under the hypotheses of the lemma, that $B$ is PI. Since $B$ is a domain, then we may consider the localization $\widehat{B}=BZ\inv$.

Suppose $B$ is not PI. Since $B$ is affine, this implies that it is also not locally PI (every finitely generated subalgebra is PI). Thus, $\widehat{B}$ is not locally PI (if it were, then the finitely generated subalgebra $B$ would be locally PI).
Now, \cite[Corollary 2]{SmZ} gives $\GKdim B \geq 2 + \GKdim Z$. By Lemma \ref{lem.GK}, $\GKdim B=3$. Since $\kk[u^n,v^n] \subset Z$ by the above, then $\GKdim Z \geq 2$, a contradiction. Thus, $B$ is PI. 

(2) Let $\degf=\deg(f)$. Set a filtration on $B$ by $\deg(u)=\deg(v)=1$ and $\deg(w)=d$ so that the associated graded ring is isomorphic to $A=\kk_q[u,v][w;\sigma]$. Then $A$ is PI and by \cite[Lemma 1]{VV}, $A$ is a gr-maximal order in a gr-simple gr-Artinian ring. Thus, by \cite[Theorem 5]{VV}, $B$ is a maximal order. Since $B$ is PI by (1), then $B$ is equal to its trace ring by \cite[Proposition 9.8]{MR}. Finally, since $B$ is affine and noetherian, then \cite[Proposition 9.11]{MR} implies $R$ is module-finite over its affine center.
\end{proof}

If $q$ is not a root of unity, then $B_q(f)$ is not PI. If it were, then the subring $\kk_q[u,v]$ would be PI, but a quantum plane is PI if and only if $q$ is a root of unity.

\subsection{The ozone group of $B_q(f)$}

Using Proposition \ref{prop.cnt}, and under the same hypotheses as Proposition \ref{prop.cnt}~(3), we now show that $B_q(f)$ has trivial ozone group. For $n=2$ and $q$ a root of unity of order not $1$ or $3$, this is done in \cite{CGWZ3}. 

Let $A$ be a prime algebra which is a finite module over its center $Z$ and $\phi \in \Oz(A)$. By \cite[Lemma 1.9]{CGWZ3}, there is a regular normal element $a \in A$ such that $\phi=\eta_a$, where $\eta_a(b) = a\inv b a$. 
If $A$ is a $G$-graded domain for some finite abelian group $G$, then we may further assume that $b$ is homogeneous and, moreover, if $G=\ZZ^n$ then we have that $\Oz_{\gr}(A)=\Oz(A)$.

\begin{example}
The ozone group $\Oz(B_q(0))$ is generated by $\eta_u$, $\eta_v$, and $\eta_w$. See \cite{CGWZ2,CGWZ3} for a more thorough analysis of the ozone groups in this setting. 
\end{example}

Suppose that $A$ is now merely a $G$-filtered domain. 
Since $a\eta_a(b) = ba$, then the domain hypothesis shows that $\deg b = \deg \eta_a(b) = \deg\phi(b)$.

\begin{lemma}\label{lem.oz1}
Suppose $f \neq 0$ and let $q$ be a primitive $\ordq$-root of unity such that $\ordq>1$ and $\ordq\nmid(j+1)$ for all $j\in\supp(f)$. If $\phi \in \Oz(B_q(f))$, then $\phi(u),\phi(v) \in \kk_q[u,v]$.
\end{lemma}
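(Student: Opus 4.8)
The plan is to combine two things already in hand: the fact (from the discussion preceding the lemma) that an ozone automorphism of $B_q(f)$ preserves filtration degrees, and the centrality of $u^\ordq$ and $v^\ordq$ from Lemma~\ref{lem.uv}. Write $\degf=\deg f$ and put the standard filtration on $B:=B_q(f)$: $\deg u=\deg v=1$ with $\deg w=\degf-1$ if $\degf\ge 2$, and $\deg u=\deg v=\deg w=1$ if $\degf\le 1$. By Section~\ref{sec.gradings} the associated graded ring is $B_q(t^\degf)$ in the first case and $B_q(0)$ in the second; in either case it is an Ore extension $\kk_q[u,v][w;\sigma,\overline\delta]$ with $\overline\delta(u),\overline\delta(v)\in\kk[u,v]$, hence a domain with PBW basis $\{v^iu^jw^k\}$ (see after Lemma~\ref{lem.ore}), and --- crucially --- its reduction rules $uv\to qvu$, $wu\to quw+\overline\delta(u)$, $wv\to q\inv vw+\overline\delta(v)$ never raise the number of occurrences of $w$ in a word.

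First I would note that, since $B$ is a domain that is module-finite over its center (Proposition~\ref{prop.PI}), any $\phi\in\Oz(B)$ equals $\eta_a$ for a regular normal $a\in B$ by \cite[Lemma 1.9]{CGWZ3}, and the domain hypothesis then gives $\deg\phi(b)=\deg b$ for all $b\in B$; in particular $\phi(u),\phi(v)\in F_1 B$. If $\degf\ge 3$ then $\deg w\ge 2$, so $F_1 B=\kk\oplus\kk u\oplus\kk v\subseteq\kk_q[u,v]$ and we are done. The substantive case is $\deg w=1$ (that is, $\degf\le 2$), where $F_1 B=\kk\oplus\kk u\oplus\kk v\oplus\kk w$ and we may write $\phi(u)=\alpha+\beta u+\gamma v+\epsilon w$ and $\phi(v)=\alpha'+\beta'u+\gamma'v+\epsilon'w$; the goal then reduces to showing $\epsilon=\epsilon'=0$.

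For that, I would use $\phi(u)^\ordq=\phi(u^\ordq)=u^\ordq$ and $\phi(v)^\ordq=v^\ordq$. Passing to $\gr B$, a domain, the leading term of a product is the product of the leading terms (and $\deg\phi(u)=1$, so $\overline{\phi(u)}=\beta u+\gamma v+\epsilon w\neq 0$), whence $\overline{\phi(u)}^{\,\ordq}=u^\ordq$ in $\gr B$. Expanding $(\beta u+\gamma v+\epsilon w)^\ordq$ and reducing to the PBW basis, the word $w\cdots w$ is the only length-$\ordq$ word that can contribute to the basis element $w^\ordq$ (reduction never raises the $w$-count, and $w^\ordq$ is already reduced), so the coefficient of $w^\ordq$ on the left is exactly $\epsilon^\ordq$, while on the right it is $0$. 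Hence $\epsilon=0$, and the same computation with $v$ in place of $u$ gives $\epsilon'=0$, so $\phi(u),\phi(v)\in\kk\oplus\kk u\oplus\kk v\subseteq\kk_q[u,v]$.

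The step I expect to be the real obstacle is this coefficient computation in $\gr B$. Powers of a degree-one element of $\gr B_q(t^\degf)$ are not as transparent as in a quantum affine space (the relations $wu=quw+v^\degf$ feed in lower-order corrections), and the hypothesis $\ordq\nmid(j+1)$ on $\supp(f)$ is precisely what kills the off-diagonal coefficients one might otherwise try to exploit: e.g.\ the coefficient of $vu^{\ordq-1}$ comes out proportional to $[\ordq]_q=0$ and so yields nothing --- indeed $\phi(u)$ genuinely may have a nonzero $v$-component. What saves the argument is structural rather than computational: because $B_q(f)$ is an Ore extension over $\kk_q[u,v]$ whose $\sigma$-derivation has image in $\kk[u,v]$, reduction in $\gr B$ is monotone in the $w$-degree, and so a single coefficient comparison --- the one for $w^\ordq$ --- isolates $\epsilon$ cleanly. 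A minor bookkeeping point is keeping straight which filtration to use according to $\degf$ and checking the trichotomy $\degf\ge 3$ / $\degf=2$ / $\degf\le 1$; note also that, pleasantly, the argument uses neither the centrality of $\Omega$ nor the relation $uv=qvu$.
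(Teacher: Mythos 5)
Your proof is correct, and while it opens the same way as the paper's (centrality of $u^\ordq,v^\ordq$ from Lemma~\ref{lem.uv} plus a degree argument in the filtered domain to get $\phi(u),\phi(v)\in F_1B$, with the case $\degf\ge 3$ immediate), the step that actually kills the $w$-coefficients is genuinely different. The paper expands $\phi(uv-qvu)=0$ in the PBW basis: the coefficient of $w^2$ forces the product of the two $w$-coefficients to vanish, and a short case analysis (using $\phi(u)^\ordq=u^\ordq$ to kill the $v$-component in one subcase, and invoking $f\neq 0$ at the very end) eliminates the remaining one. You instead extract the coefficient of $w^\ordq$ from $\phi(u)^\ordq=u^\ordq$ directly, using the observation that the reduction rules of $\gr B$ never increase the number of occurrences of $w$, so that this coefficient is exactly $\epsilon^\ordq$. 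Your route is more uniform: it treats $u$ and $v$ symmetrically and independently, covers $\degf\le 1$ with no extra work (the paper's proof of the lemma only explicitly addresses $d>2$ and $d=2$), and, as you note, uses neither $f\neq 0$ nor the relation $uv-qvu$, so it in fact proves the conclusion for a slightly larger class (consistent with the statement also holding for $B_q(0)$). What the paper's computation buys in exchange is the explicit coefficient bookkeeping that is reused in the proof of Theorem~\ref{thm.trivial}. The one point to make explicit in a final write-up is the justification that in the domain $\gr B$ the principal symbol of $\phi(u)^\ordq$ equals $\overline{\phi(u)}^{\,\ordq}$, which you do state; everything else checks out.
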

\begin{proof}
Let $\phi \in \Oz(B_q(f))$. Then $u^\ordq \in Z(B_q(f))$ and so $u^n=\phi(u^n)$. Since $B_q(f)$ is a domain, then a degree argument shows that $\deg\phi(u) = 1$. Similarly, $\deg\phi(v)=1$. We will be using the appropriate grading as considered in \S \ref{sec.gradings}.

Write 
\[ 
\phi(u)=a_{11}u+a_{12}v+a_{13}w + b_1 
    \quad\text{and}\quad
\phi(v)=a_{21}u+a_{22}v+a_{23}w + b_2\] 
for $a_{ij},b_i \in \kk$. Note if $d>2$, then $a_{13}=a_{23}=0$ and we are done. Assume now that $d=2$. Then the coefficient of $w^2$ in $\phi(uv-qvu)=0$ is $a_{13}a_{23}=0$. 

Suppose $a_{13}=0$. The argument assuming that $a_{23}=0$ is similar. The coefficient of $w$ in $\phi(uv-qvu)$ is $b_1a_{23}$. If $a_{23}=0$ then we are done, so assume that $b_1=0$. Now expanding $\phi(u)^n$ shows that $a_{12}=0$, so $\phi(u)=a_{11} u$ where $a_{11}^n=1$. Then
\begin{align*}
0   &= \phi(uv-qvu) \\
    &= (a_{11}u)(a_{21}u+a_{22}v+a_{23}w + b_2) \\
        &\qquad -q(a_{21}u+a_{22}v+a_{23}w + b_2)(a_{11}u) \\
    &= (1-q)(a_{11}a_{21}u^2 + b_2a_{11} u) + a_{11}a_{23}(uw-qwu) \\
    &= (1-q)(a_{11}a_{21}u^2 + b_2a_{11} u) + a_{11}a_{23}f(v).
\end{align*}
Since $f \neq 0$, then it follows that $a_{23}=0$.
\end{proof}

\begin{theorem}\label{thm.trivial}
Suppose $f \neq 0$ and let $q$ be a primitive $\ordq$-root of unity such that $n>1$ and $\ordq\nmid(j+1)$ for all $j\in\supp(f)$. Then $\Oz(B_q(f))$ is trivial.
\end{theorem}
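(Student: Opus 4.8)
The plan is to show that any $\phi \in \Oz(B_q(f))$ fixes each of $u,v,w$ (up to nothing), using only the central elements $u^\ordq,v^\ordq,\Omega$ supplied by Proposition \ref{prop.cnt}(3).

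First I would invoke Lemma \ref{lem.oz1} to get $\phi(u),\phi(v)\in\kk_q[u,v]$. Since $\phi$ respects the grading (resp.\ filtration) of \S\ref{sec.gradings} and $u,v$ have degree $1$, each of $\phi(u),\phi(v)$ is a $\kk$-linear combination of $u,v$ (with a possible scalar term in the filtered case, killed by the next step). As $u^\ordq,v^\ordq\in Z(B_q(f))$ and $\phi$ fixes the center, $\phi(u)^\ordq=u^\ordq$ and $\phi(v)^\ordq=v^\ordq$. Writing $\phi(u)=\mu u+\mu'v$ and expanding in the quantum plane via the $q$-binomial theorem, the Gaussian coefficients $\binom{\ordq}{i}_q$ vanish for $0<i<\ordq$ precisely because $q$ is a primitive $\ordq$-th root of unity, so $\phi(u)^\ordq=\mu^\ordq u^\ordq+(\mu')^\ordq v^\ordq$; comparing with $u^\ordq$ forces $\mu'=0$ and $\mu^\ordq=1$. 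By the $u\leftrightarrow v$ symmetry, $\phi(v)=\nu v$ with $\nu^\ordq=1$.

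Next I would pin down $\phi(w)$. Applying $\phi$ to $wu-quw-f(v)=0$ and arguing on degrees as in the proof of Lemma \ref{lem.oz1} gives $\phi(w)=\lambda w+p$ with $\lambda\in\kk^\times$ and $p\in\kk_q[u,v]$. Then I would use $\phi(\Omega)=\Omega$: writing $\Omega=uvw+g_1(u)-q\,g_2(v)$ as in \eqref{eq.Omega} (with $g_1,g_2\neq 0$ since $f\neq 0$), the identity $\mu\nu\,uv\,\phi(w)+g_1(\mu u)-q\,g_2(\nu v)=uvw+g_1(u)-q\,g_2(v)$ splits according to $w$-degree into $p=0$ together with $\mu\nu\lambda=1$, and the $w$-free part (a polynomial in $u$ plus one in $v$, which must vanish separately) yields $g_1(\mu u)=g_1(u)$, $g_2(\nu v)=g_2(v)$, i.e.\ $\mu^{j+1}=\nu^{j+1}=1$ for all $j\in\supp(f)$. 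Applying $\phi$ to the two remaining relations similarly gives $\lambda\mu\,f(v)=f(\nu v)$ and $\lambda\nu\,f(u)=f(\mu u)$, i.e.\ $\lambda\mu=\nu^{j}$ and $\lambda\nu=\mu^{j}$ for all $j\in\supp(f)$.

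The hard part will be the final step: deducing $\mu=\nu=\lambda=1$ from these multiplicative constraints — $\mu^\ordq=\nu^\ordq=1$, $\mu^{j+1}=\nu^{j+1}=1$, $\lambda\mu=\nu^j$, $\lambda\nu=\mu^j$ ($j\in\supp f$), and $\mu\nu\lambda=1$. This is exactly where the hypothesis $\ordq\nmid(j+1)$ for all $j\in\supp(f)$ must be used, to rule out a nontrivial simultaneous scaling of $u,v,w$ that still fixes every central element; the subtlety is the arithmetic interplay between $\ordq$ and the exponents $j+1$. Should these constraints not suffice on their own, the fallback is to supplement $u^\ordq,v^\ordq,\Omega$ with further central elements (a suitable correction of $w^\ordq$, cf.\ the Remark following Lemma \ref{lem.wpwr}) or to exploit the presentation $\phi=\eta_a$ with $a$ a regular normal element from \cite[Lemma 1.9]{CGWZ3}, analyzing its leading term to eliminate the remaining diagonal automorphisms. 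Once $\mu=\nu=1$, the relation $\lambda\mu=\nu^j$ gives $\lambda=1$, so $\phi=\id$ and $\Oz(B_q(f))$ is trivial.
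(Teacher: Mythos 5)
Your proposal tracks the paper's own proof almost line for line: Lemma \ref{lem.oz1} to land $\phi(u),\phi(v)$ in $\kk_q[u,v]$; the centrality of $u^\ordq,v^\ordq$ to force $\phi(u)=\mu u$, $\phi(v)=\nu v$ with $\mu^\ordq=\nu^\ordq=1$ (your $q$-binomial computation is a fine substitute for the paper's appeal to gradedness of automorphisms of the quantum plane); and then $\phi(\Omega)=\Omega$ to kill the $\kk_q[u,v]$-part of $\phi(w)$ and to extract $\mu\nu\lambda=1$ together with $\mu^{j+1}=\nu^{j+1}=1$ for all $j\in\supp(f)$. All of this is correct and is exactly the paper's route, via Proposition \ref{prop.cnt}(3) and the explicit form \eqref{eq.Omega} of $\Omega$.

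The gap is precisely the step you flag as ``the hard part'': you never deduce $\mu=\nu=\lambda=1$, you only list the constraints and then offer fallbacks (``should these constraints not suffice\dots''). A proof cannot end there. For the record, the paper closes the argument at exactly this point: from $\xi^{j+1}=1$ (read off from the coefficient of $u^{j+1}$ in $\phi(\Omega)=\Omega$), $\xi^{\ordq}=1$, and the hypothesis $\ordq\nmid(j+1)$, it concludes $\xi=\xi'=1$, and then $a=1$ from the coefficient of $uvw$. Your hesitation does point at a genuine subtlety --- those two conditions by themselves give only $\ord(\xi)\mid\gcd(\ordq,\,j+1)$, and $\ordq\nmid(j+1)$ makes this gcd a proper divisor of $\ordq$ rather than $1$ (compare $\ordq=4$, $j+1=2$) --- so one must either argue that the relevant gcd is $1$ or bring in further central elements, as your fallback suggests. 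But as written your submission stops one arithmetic step short of a proof, and that missing step is the whole point at which the divisibility hypothesis is supposed to do its final work; identifying where the difficulty lies is not the same as resolving it.
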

\begin{proof} 
Let $\phi \in \Oz(B_q(f))$. By Lemma \ref{lem.oz1}, $\phi(u),\phi(v) \in \kk_q[u,v]$. Since $q \neq 1$, then all automorphisms of $\kk_q[u,v]$ are graded. As $\phi(u^n)=u^n$ and $\phi(v^n)=v^n$, then $\phi(u) = \xi u$ and $\phi(v)=\xi' v$ for some $\ordq$-root of unity $\xi$.

Suppose $d \geq 2$. Since $\Omega$ is central, then $\phi(\Omega)=\Omega$. 
Thus, $\deg(\phi(w)) = d-1$. Write $\phi(w) = aw + h$ for some $a \in \kk^\times$ and some $h \in \kk_q[u,v]_{(d-1)}$.
Then since $\Omega \in Z(B_q(f)$, we have
\begin{align*}
\Omega &= \phi(\Omega)
    = (\xi u)(\xi' v)(aw + h) + \left( \sum_{j=0}^{\degf} \beta_j c_j (\xi u)^{j+1} \right)
    - q\left( \sum_{j=0}^{\degf} \alpha_j c_j (\xi' v)^{j+1} \right) \\
    &= \xi\xi' a (uvw) + \xi\xi' uv h 
    + \left( \sum_{j=0}^{\degf} \xi^{j+1}\beta_j c_j u^{j+1} \right)
    - q\left( \sum_{j=0}^{\degf} \alpha_j c_j (\xi')^{j+1} v^{j+1} \right).
\end{align*}
Since $\deg_u(uvh) \geq 1$ and $\deg_v(uvh) \geq 1$, then it follows that $h=0$. Furthermore, since $\ordq \nmid d+1$, it follows that $\xi=\xi'=1$. But then $a=1$, so $\phi$ is trivial.

The case $d<2$ is similar. In this case, $\deg u = \deg v = \deg w = 1$ and again since $\phi(\Omega)=\Omega$, we have $\deg(\phi(w))=1$. Thus, we may write $\phi(w) = aw + h$ for some $a \in \kk^\times$ and some $h \in \kk_q[u,v]_1$. The remainder of the argument is identical. 
\end{proof}

The next corollary follows directly from \cite[Theorem A]{CGWZ3}. 

\begin{corollary}
Suppose $f \neq 0$ and let $q$ be a primitive $\ordq$-root of unity such that $n>1$ and $\ordq\nmid(j+1)$ for all $j\in\supp(f)$. Then every normal element of $B_q(f)$ is central.
\end{corollary}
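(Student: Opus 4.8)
The plan is to deduce this directly from Theorem \ref{thm.trivial} together with the characterization of the trivial-ozone property established in \cite{CGWZ3}. Under the stated hypotheses on $q$ and $f$, Proposition \ref{prop.PI} guarantees that $B_q(f)$ is a PI algebra that is module-finite over its affine center, and it is a domain, being an iterated Ore extension of $\kk$, hence in particular prime. These are precisely the running hypotheses under which \cite[Theorem A]{CGWZ3} applies, and that result asserts that for such an algebra the ozone group is trivial if and only if every normal element is central. Since Theorem \ref{thm.trivial} establishes $\Oz(B_q(f)) = \{\id\}$, the conclusion follows immediately.

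For completeness it is worth recording the short direct argument underlying \cite[Theorem A]{CGWZ3} in this setting. Let $a$ be a normal element of $B_q(f)$. Because $B_q(f)$ is a domain, $a$ is a regular (non-zero-divisor) element, and normality, i.e. $a\,B_q(f) = B_q(f)\,a$, yields a well-defined $\kk$-algebra automorphism $\eta_a$ of $B_q(f)$ determined by $b a = a\,\eta_a(b)$ for all $b \in B_q(f)$. For any $z \in Z(B_q(f))$ we have $a\,\eta_a(z) = z a = a z$, so $\eta_a(z) = z$ by regularity of $a$; hence $\eta_a \in \Oz(B_q(f))$. By Theorem \ref{thm.trivial}, $\eta_a = \id_{B_q(f)}$, which is exactly the statement that $ba = ab$ for every $b$, so $a \in Z(B_q(f))$.

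The only point requiring any care is the bookkeeping of hypotheses rather than any new computation: one must note that $B_q(f)$ is a domain, so that normal elements are automatically regular and the conjugation map $\eta_a$ is a genuine automorphism of $B_q(f)$ itself rather than merely of a localization, and — if one prefers to cite \cite[Theorem A]{CGWZ3} as a black box — that $B_q(f)$ satisfies its prime, affine, and module-finite-over-center hypotheses, which is precisely the content of Proposition \ref{prop.PI}. With these observations in place the corollary is immediate, and there is no substantive obstacle.
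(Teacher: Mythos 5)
Your proposal is correct and matches the paper's own (one-line) proof, which likewise deduces the corollary directly from \cite[Theorem A]{CGWZ3} together with Theorem \ref{thm.trivial}; your additional unpacking of the conjugation argument and the hypothesis check via Proposition \ref{prop.PI} is sound but not a different route.
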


\section{Invariant Theory}
\label{sec.invariant}

We consider automorphisms of the $B_q(f)$, including a classification of graded automorphisms of $B_q(t^d)$. We show that, in general, $B_q(t^d)^G$ is never AS regular. First, we define the following subset of $B_q(t^d)$:
\begin{align}\label{eq.T}
T_q:= \{ h \in \kk_q[u,v] : hu=quh \quad\text{and}\quad hv=q\inv vh \}.
\end{align}

\begin{lemma}\label{lem.Td}
Let $B=B_q(t^d)$ for some $d \geq 1$. 
\begin{enumerate}
\item If $q$ is a nonroot of unity, then $T_q=\{0\}$.
\item If $q$ is an primitive $\ordq$-root of unity, then 
\[ T_q=\mathrm{span}_\kk\{u^{b\ordq-1}v^{c\ordq-1}:b,c\in\NN\} \subset \kk_q[u,v].\]
\end{enumerate}
\end{lemma}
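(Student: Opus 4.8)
The plan is to work in the PBW basis $\{u^a v^b : a,b \in \NN\}$ of $\kk_q[u,v]$ (note that $T_q$ is defined purely in terms of $\kk_q[u,v]$ and does not actually depend on $d$) and to exploit the fact that right multiplication by $u$ and by $v$ acts \emph{diagonally} on this basis. First I would record the commutation identities $v^b u = q^{-b} u v^b$ and $v u^a = q^{-a} u^a v$, which follow by a one-line induction from $uv = qvu$. These give, for each basis monomial,
\[
(u^a v^b)\,u = q^{-b}\, u\,(u^a v^b), \qquad (u^a v^b)\,v = q^{a}\, v\,(u^a v^b).
\]

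Next, writing a general element as $h = \sum_{a,b} c_{ab}\, u^a v^b$, I would compute
\[
hu - quh = \sum_{a,b} c_{ab}\,(q^{-b}-q)\, u^{a+1} v^b, \qquad
hv - q\inv vh = \sum_{a,b} c_{ab}\,(1 - q^{-a-1})\, u^{a} v^{b+1}.
\]
Since the monomials $u^{a+1}v^b$ (respectively $u^a v^{b+1}$) occurring on the right-hand sides are linearly independent, the equation $hu = quh$ forces $c_{ab}=0$ whenever $q^{b+1}\neq 1$, and the equation $hv = q\inv vh$ forces $c_{ab}=0$ whenever $q^{a+1}\neq 1$. Hence $h \in T_q$ if and only if $h$ is a $\kk$-linear combination of those monomials $u^a v^b$ with $q^{a+1} = q^{b+1} = 1$.

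Finally I would conclude by a case analysis. If $q$ is not a root of unity, then $q^{a+1}=1$ has no solution with $a \geq 0$, so the spanning set is empty and $T_q = \{0\}$, proving (1). If $q$ is a primitive $\ordq$-root of unity, then $q^{a+1}=1$ holds exactly when $\ordq \mid a+1$, i.e.\ $a = b\ordq - 1$ for some integer $b \geq 1$, and likewise the exponent of $v$ must equal $c\ordq - 1$ for some integer $c \geq 1$; this is precisely the spanning set asserted in (2). I do not anticipate a genuine obstacle here — the computation is short and direct — and the only point that needs a moment's care is the linear-independence step, which is what allows the two separately diagonal conditions to be intersected monomial by monomial rather than interacting with one another.
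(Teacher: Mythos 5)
Your proposal is correct and is essentially the paper's own argument: both reduce to the observation that right multiplication by $u$ and $v$ acts diagonally on the PBW monomials $u^{a}v^{b}$, yielding the conditions $q^{a+1}=q^{b+1}=1$ monomial by monomial, from which both cases follow immediately. The only cosmetic difference is that you organize the computation as a sum over all monomials and invoke linear independence explicitly, whereas the paper argues summand by summand.
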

\begin{proof}
Assume $h\in T_q$. If $u^{m_1}v^{m_2}$ is a summand of $h$, then 
\begin{align*}
    (u^{m_1}v^{m_2})u &= q^{-m_2}u(u^{m_1}v^{m_2}) \\
    (u^{m_1}v^{m_2})v &= q^{m_1}v(u^{m_1}v^{m_2}).
\end{align*}
So, $q^{m_2+1}=1$ and $q^{m_1+1}=1$. Hence, if $q$ is a nonroot of unity, then $h=0$. Otherwise, $\ordq = \ord(q)$ must divide $m_1+1$ and $m_2+1$ for each $(m_1,m_2) \in \supp(h)$. Thus, in case (2), $T_q \subset \kk\{u^{b\ordq-1}v^{c\ordq-1}:b,c\in\NN\}$. The reverse inclusion is clear.
\end{proof}

\begin{lemma}
Fix $q \in \kk^\times$ and $\degf=\deg(f)$. 

(1) Let $a,\xi \in \kk^\times$ such that $\xi$ is a $(\degf+1)$-root of unity and for all $i \in \supp(f)$, $a^i=a^d$ and $\xi^i = \xi\inv$. Let $h \in T_q$. Then the following determines an automorphism of $B_q(f)$:
\begin{align}\label{eq.phi_aut}
	\phi_{a,\xi,h}(u)= au, \qquad 
    \phi_{a,\xi,h}(v)=\xi a v, \qquad 
    \phi_{a,\xi,h}(w)= \xi\inv a^{\degf-1} w + h.
\end{align}

(2) If $q=\pm 1$, then there is an automorphism $\tau$ of $B_q(f)$ given by 
\begin{align}\label{eq.tau}
\tau(u) = v, \qquad \tau(v) = u, \qquad \tau(w) = w.
\end{align}
\end{lemma}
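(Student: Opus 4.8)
The plan is to verify, in each case, that the prescribed linear assignment on $u,v,w$ respects the three defining relations of $B_q(f)$ -- so that it extends to an algebra endomorphism -- and then to check that this endomorphism is invertible.

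For part (1), write $\phi=\phi_{a,\xi,h}$. The relation $uv-qvu$ is immediate, since $\phi(u)=au$ and $\phi(v)=\xi av$ involve neither $w$ nor $h$: $\phi(uv-qvu)=\xi a^{2}(uv-qvu)=0$. For $wu-quw-f(v)$ I would compute
\[
\phi(wu-quw-f(v))=\xi\inv a^{\degf}(wu-quw)+a(hu-quh)-f(\xi av),
\]
and observe that the middle term vanishes because $h\in T_q$ forces $hu=quh$, that the first term equals $\xi\inv a^{\degf}f(v)$ since $wu-quw=f(v)$ in $B_q(f)$, and that $f(\xi av)=\sum_{i\in\supp(f)}c_i\xi^{i}a^{i}v^{i}=\xi\inv a^{\degf}f(v)$ by the hypotheses $\xi^{i}=\xi\inv$ and $a^{i}=a^{\degf}$; hence the expression is $0$. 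The relation $wv-q\inv vw-f(u)$ is handled the same way: $\phi$ sends it to $a^{\degf}(wv-q\inv vw)+\xi a(hv-q\inv vh)-f(au)$, where the $h$-term vanishes because $h\in T_q$ forces $hv=q\inv vh$, where $wv-q\inv vw=f(u)$, and where $f(au)=\sum c_ia^{i}u^{i}=a^{\degf}f(u)$. Thus $\phi$ is an endomorphism. For invertibility I would check that the maps $\phi_{a,\xi,h}$ are closed under composition and that $\phi_{a\inv,\xi\inv,h^{*}}$ with $h^{*}=-\xi a^{1-\degf}\,h(a\inv u,\xi\inv a\inv v)$ is its two-sided inverse; the parameters $a\inv,\xi\inv,h^{*}$ again satisfy the hypotheses of the lemma because products and rescalings of $(\degf+1)$-roots of unity are $(\degf+1)$-roots of unity, and because $T_q$ -- spanned by the monomials $u^{b\ordq-1}v^{c\ordq-1}$ by Lemma \ref{lem.Td} -- is stable under rescaling $u$ and $v$. (Alternatively, $\phi$ preserves the filtration of $B_q(f)$ by powers of $w$ and induces a scaling automorphism of the associated graded ring $\kk_q[u,v][\overline{w};\sigma]$, which already forces $\phi$ to be bijective.)

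For part (2), the point is that $q=\pm1$ makes $q\inv=q$, so that $uv-qvu$ coincides with $uv-q\inv vu$ and the relations $wu-quw-f(v)$, $wv-q\inv vw-f(u)$ are interchanged by $u\leftrightarrow v$, $w\mapsto w$. Concretely, $\tau(uv-qvu)=vu-quv=-q(uv-q\inv vu)=0$, $\tau(wu-quw-f(v))=wv-qvw-f(u)=wv-q\inv vw-f(u)=0$, and $\tau(wv-q\inv vw-f(u))=wu-q\inv uw-f(v)=wu-quw-f(v)=0$. Hence $\tau$ is an endomorphism; since $\tau^{2}$ fixes $u,v,w$ it equals $\id_{B_q(f)}$, so $\tau$ is an automorphism.

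The only step requiring any thought is the invertibility in (1): the relation-checking is pure substitution, but for bijectivity one must notice that the natural candidate inverse is again a map of the prescribed shape (equivalently, that $T_q$ is rescaling-stable) or, alternatively, route the argument through the associated graded of the filtration by powers of $w$. Everything else is a mechanical consequence of $\xi^{i}=\xi\inv$, $a^{i}=a^{\degf}$, $q\inv=q$, and the defining equations of $T_q$.
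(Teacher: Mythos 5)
Your proof is correct and follows essentially the same route as the paper: a direct check that the assignments respect the three defining relations, using $h\in T_q$ to kill the $hu-quh$ and $hv-q\inv vh$ terms and the hypotheses $\xi^i=\xi\inv$, $a^i=a^{\degf}$ to match $f(\xi av)$ and $f(au)$ with the scaled relations. Your explicit verification of invertibility (and that the inverse parameters again satisfy the hypotheses) is a small amount of extra care that the paper leaves implicit, but it is not a different approach.
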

\begin{proof}
We must show that $\phi$ respects the relations on $B_q(f)$. This is clear for the relation $uv-qvu$. Now
\begin{align*}
\phi(w)\phi(u)&-q\phi(u)\phi(w) - f(\phi(v)) \\
    &= \xi^\degf a^{\degf} (wu - quw) + a(hu-quh) - f(\xi a v) \\
    &= \xi^\degf a^{\degf} f(v) - \xi^\degf a^{\degf} f(v) = 0.
\end{align*}
The last relation is checked similarly. The proof for $\tau$ is straightforward.
\end{proof}

\begin{lemma}
(1) Let $\phi_{a_i,\xi_i,h_i}$ be as in \eqref{eq.phi_aut} for $i=1,2$. Then
\[ \phi_{a_2,\xi_2,h_2} \circ \phi_{a_1,\xi_1,h_1} = \phi_{a_1a_2,\xi_1\xi_2,h_3}\] 
where $h_3 = h_2 + \phi_{a_2,\xi_2,h_2}(h_1) \in T_q$.

(2) Suppose $q=\pm 1$. Let $\phi_{a,\xi,h}$ be as in \eqref{eq.phi_aut} and let $\tau$ be as in \eqref{eq.tau}. Then
\[ \tau \circ \phi_{a,\xi,h} = \phi_{\xi a,\xi\inv,\tau(h)} \circ \tau.\]
\end{lemma}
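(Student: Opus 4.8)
The plan is to verify both identities by evaluating each side on the algebra generators $u,v,w$: since both sides of each equation are algebra endomorphisms of $B_q(f)$ (composites of automorphisms) and $u,v,w$ generate $B_q(f)$, agreement on these three elements forces equality. The formulas in \eqref{eq.phi_aut} and \eqref{eq.tau} are linear in $u,v,w$ with coefficients in $\kk_q[u,v]$, so the computations are short substitutions. Before running them I would record two facts that get reused. First, the hypotheses on $(a,\xi,h)$ required for \eqref{eq.phi_aut} to define an automorphism — namely $\xi^{\degf+1}=1$ together with $a^i=a^{\degf}$ and $\xi^i=\xi\inv$ for all $i\in\supp(f)$, and $h\in T_q$ — behave well under the operations at hand: the conditions on $(a,\xi)$ are multiplicative, so $(a_1a_2,\xi_1\xi_2)$ is again of the allowed type, and $(\xi a,\xi\inv)$ is too once one also invokes $\xi^{\degf+1}=1$. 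Second, $T_q$ is a $\kk$-subspace of $\kk_q[u,v]$ that is carried into itself by every $\phi_{a,\xi,h}$ and by $\tau$: if $h\in T_q$ and $\psi$ is an automorphism of $B_q(f)$ preserving $\kk_q[u,v]$ and scaling $u$ and $v$, then applying $\psi$ to $hu=quh$ and $hv=q\inv vh$ shows $\psi(h)$ satisfies the same normalizing relations (for $\tau$ one additionally uses $q=q\inv$, valid since $q=\pm1$), whence $\psi(h)\in T_q$.

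For part (1), set $\phi_i=\phi_{a_i,\xi_i,h_i}$. Applying $\phi_2\circ\phi_1$ to $u$ gives $a_1a_2\,u$ and to $v$ gives $(\xi_1\xi_2)(a_1a_2)\,v$, which pins down the first two parameters as $a_1a_2$ and $\xi_1\xi_2$. Applying $\phi_2\circ\phi_1$ to $w$, using $\phi_1(w)=\xi_1\inv a_1^{\degf-1}w+h_1$ and then $\phi_2(w)=\xi_2\inv a_2^{\degf-1}w+h_2$ together with the action of $\phi_2$ on $\kk_q[u,v]$, produces the term $(\xi_1\xi_2)\inv(a_1a_2)^{\degf-1}w$ plus a correction lying in $\kk_q[u,v]$; collecting that correction gives $h_3$ as in the statement, and the stability remark shows $h_3\in T_q$. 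Since $\phi_2\circ\phi_1$ and $\phi_{a_1a_2,\xi_1\xi_2,h_3}$ agree on $u,v,w$, they are equal.

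For part (2), recall that $\tau$ is an automorphism precisely because $q=\pm1$, so $q=q\inv$. Evaluating $\tau\circ\phi_{a,\xi,h}$ and $\phi_{\xi a,\xi\inv,\tau(h)}\circ\tau$ on $u$ gives $a v$ on both sides; on $v$ both give $\xi a\,u$; on $w$ the left side gives $\xi\inv a^{\degf-1}w+\tau(h)$ while the right side gives $\xi\,(\xi a)^{\degf-1}w+\tau(h)=\xi^{\degf}a^{\degf-1}w+\tau(h)$, and these coincide because $\xi^{\degf+1}=1$. One also checks that $(\xi a,\xi\inv)$ meets the allowed conditions and that $\tau(h)\in T_q$, via the two facts recorded above. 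Hence the two automorphisms agree on generators and are equal.

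I expect no genuine obstacle here: both statements are bookkeeping identities. The only points needing a word of care are (i) keeping track of the scalar factors when composing the $w$-images, since $\phi_{a,\xi,h}(w)$ has both a $w$-component and a $\kk_q[u,v]$-component that interact under a second application of such a map, and (ii) confirming that the resulting $h_3$ and $\tau(h)$ genuinely lie in $T_q$, for which the stability of $T_q$ under these automorphisms — itself an immediate consequence of the normalizing relations defining $T_q$ — does all the work.
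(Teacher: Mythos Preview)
Your proposal is correct and follows essentially the same approach as the paper: both verify the identities by evaluating each side on the generators $u,v,w$. You are somewhat more explicit than the paper in checking that the resulting parameter triples $(a_1a_2,\xi_1\xi_2,h_3)$ and $(\xi a,\xi\inv,\tau(h))$ again satisfy the hypotheses needed for \eqref{eq.phi_aut}, and in arguing stability of $T_q$; the paper simply remarks that $\phi_{a_2,\xi_2,h_2}$ only scales the terms of $h_1$, so $h_3\in T_q$.
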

\begin{proof}
For (1), the relation is clear when evaluated on $u$ and $v$. When evaluated on $w$ we have
\begin{align*}
(\phi_{a_2,\xi_2,h_2} \circ \phi_{a_1,\xi_1,h_1})(w)
    &= \phi_{a_2,\xi_2,h_2}(\xi_1\inv a_1^{d-1} w + h_1) \\
    &= \xi_1\inv a_1^{d-1} \phi_{a_2,\xi_2,h_2} (w) + \phi_{a_2,\xi_2,h_2}(h_1) \\
    &= (\xi_1\xi_2)\inv (a_1a_2)^{d-1} w + (h_2 + \phi_{a_2,\xi_2,h_2}(h_1)) \\
    &= \phi_{a_1a_2,\xi_1\xi_2,h_3}(w).
\end{align*}
Since $\phi_{a_2,\xi_2,h_2}$ only scales the terms of $h_1$, then it is clear that $h_3 \in T_q$.

For (2) we have
\begin{align*}
(\tau \circ \phi_{a,\xi,h})(u) 
    &= av
    = \xi\inv (\xi a v)
    = \phi_{\xi a,\xi\inv,h}(v)
    = (\phi_{\xi a,\xi\inv,h} \circ \tau)(u) \\
(\tau \circ \phi_{a,\xi,h})(v) 
    &= \xi au
    = \phi_{\xi a,\xi\inv,h}(u)
    = (\phi_{\xi a,\xi\inv,h} \circ \tau)(v) \\
(\tau \circ \phi_{a,\xi,h})(w) 
    &= \xi\inv a^{d-1} w + \tau(h)
    = (\xi\inv)\inv (\xi a)^{d-1} w + \tau(h) \\
    &= (\phi_{\xi a,\xi\inv,\tau(h)} \circ \tau)(w).
\end{align*}
This proves the result.
\end{proof}

\subsection{Graded automorphisms}
We will assume throughout that $q \neq 1$. Fix ${\degf} \geq 2$, and let $g \in \Aut(B_q(t^{\degf}))$. For $b \in B_q(t^{\degf})$, write 
\[ g(b) = g_0(b) + g_1(b) + g_2(b) + \cdots \]
where $g_k(b)$ denotes the degree $k$ component of the image of $b$.

\begin{lemma}\label{lem.phi0}
Fix $q \neq 1$ and ${\degf} \geq 2$, and let $g \in \Aut(B_q(t^{\degf}))$. Then $\psi_0=0$.
\end{lemma}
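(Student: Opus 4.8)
The plan is to establish the three scalar identities $g_0(u)=g_0(v)=g_0(w)=0$ (so that $g$ maps the augmentation ideal $B_{\geq1}$ into itself), treating the degree-one generators and $w$ by separate arguments.

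For the degree-one generators I would extract the degree-$0$ homogeneous component of the image under $g$ of each defining relation of $B:=B_q(t^{\degf})$. Since $B$ is connected $\NN$-graded, the degree-$0$ part of $g(u)g(v)-q\,g(v)g(u)=0$ is $(1-q)\,g_0(u)g_0(v)=0$, so $g_0(u)g_0(v)=0$ because $q\neq1$; likewise the relations $wu-quw-v^{\degf}$ and $wv-q\inv vw-u^{\degf}$ give $(1-q)\,g_0(u)g_0(w)=g_0(v)^{\degf}$ and $(1-q\inv)\,g_0(v)g_0(w)=g_0(u)^{\degf}$. In the case $g_0(u)=0$ the second identity forces $g_0(v)=0$, and in the case $g_0(v)=0$ the third forces $g_0(u)=0$; either way $g_0(u)=g_0(v)=0$, while $c:=g_0(w)$ is so far unconstrained.

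The main step is to show $c=0$, and for this I would use that $B$ is a connected graded \emph{domain} (it is an iterated Ore extension). For nonzero $b\in B$, write $\nu(b)$ for the lowest degree occurring in $b$ and $b_\flat$ for the corresponding homogeneous component; then $\nu(bb')=\nu(b)+\nu(b')$ and $(bb')_\flat=b_\flat\,b'_\flat$. Assume $c\neq0$. Since $g$ is injective, $u_+:=g(u)$ and $v_+:=g(v)$ are nonzero elements of $B_{\geq1}$; write $g(w)=c+w_+$ with $w_+\in B_{\geq1}$. Expanding the image of $wu-quw=v^{\degf}$ gives
\[(1-q)\,c\,u_+\;+\;\bigl(w_+u_+-q\,u_+w_+\bigr)\;=\;v_+^{\degf}.\]
The first summand has lowest degree $p:=\nu(u_+)\geq1$ with nonzero lowest component $(1-q)\,c\,(u_+)_\flat$, whereas $w_+u_+-qu_+w_+$ lies in $B_{\geq p+1}$ (or vanishes); hence the left side has lowest degree exactly $p$, while the right side has lowest degree $\degf\cdot\nu(v_+)$, so $p=\degf\,\nu(v_+)$. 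The symmetric computation from $wv-q\inv vw=u^{\degf}$ yields $\nu(v_+)=\degf\,p$, and therefore $p=\degf^2p$, which is impossible since $p\geq1$ and $\degf\geq2$. Hence $c=0$.

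The point I expect to require the most care is precisely this last degree count: one must verify that when $c\neq0$ no cancellation can lower the degree of the left-hand side below $p$, which is exactly where the hypotheses $q\neq1$ and $\degf\geq2$ are genuinely used. Some such hypothesis is unavoidable: when $q=1$ the assignment $u\mapsto u$, $v\mapsto v$, $w\mapsto w+1$ is an automorphism of $B_1(t)$ with nonzero $g_0(w)$.
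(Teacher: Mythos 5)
Your argument is correct, and in outline it follows the same strategy as the paper: apply $g$ to the defining relations and compare homogeneous components. The details differ in both halves, and in a way worth noting. For $g_0(u)=g_0(v)=0$ the paper simply appeals to a ``standard argument'' with a citation; your degree-zero extraction from all three relations (using $(1-q)g_0(u)g_0(v)=0$ together with $(1-q)g_0(u)g_0(w)=g_0(v)^{\degf}$ and $(1-q\inv)g_0(v)g_0(w)=g_0(u)^{\degf}$) is a clean, self-contained substitute. For $g_0(w)=0$ the paper extracts the degree-one component of the image of $wu-quw-v^{\degf}$, obtaining $(1-q)g_0(w)g_1(u)=0$, and concludes at once --- which tacitly uses $g_1(u)\neq 0$, a fact that is not entirely obvious before one knows that $g$ preserves the augmentation ideal. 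Your lowest-degree argument, using that $B$ is a connected graded domain to force $\nu(g(u))=\degf\,\nu(g(v))$ and $\nu(g(v))=\degf\,\nu(g(u))$, hence $p=\degf^{2}p$ with $p\geq 1$, sidesteps that issue entirely: it needs only the injectivity of $g$, and it makes explicit exactly where the hypotheses $q\neq 1$ and $\degf\geq 2$ enter. The trade-off is a somewhat longer proof; what it buys is the removal of the implicit nonvanishing assumption in the paper's one-line computation. One small caveat: your closing example with $q=1$ and $f=t$ violates both hypotheses at once (and in that case $\deg w=0$, so the algebra is not connected graded), so it illustrates that some hypothesis is needed rather than the sharpness of either one individually.
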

\begin{proof}
Because of the relation $xy-qyx$ and $q \neq 1$, standard arguments show that $g_0(u)=g_0(v)=0$ (see, e.g. \cite[Lemma 2.2]{Giso1}). In a similar way we have
\[ 0 = g_1(wu-quw-v^\degf) = (1-q)g_0(w) g_1(u),\]
so $g_0(w)=0$. 
\end{proof}

\begin{proposition}\label{prop.graut}
Fix $q \neq 1$ and ${\degf} \geq 2$. Let $\phi \in \Autgr(B_q(t^{\degf}))$. If $q \neq -1$, then 
$\phi=\phi_{a,\xi,h}$ for some $a \in \kk^\times$, $\xi$ a $(\degf+1)$-root of unity, and $h \in T_q$

If $q=-1$ and $\psi \in \Autgr(B_{-1}(t^2))$, then either $\psi=\phi$ or $\psi=\phi \circ \tau$ where $\phi=\phi_{a,\xi,h}$ as above. 
\end{proposition}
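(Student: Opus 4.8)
The plan is to start from a general graded automorphism $\phi\in\Autgr(B_q(t^\degf))$ and pin down its action on the generators $u,v,w$ using the relations, then recognize the result as one of the listed normal forms. By Lemma~\ref{lem.phi0} we already know the degree-zero part vanishes, so $\phi(u)=a_{11}u+a_{12}v$, $\phi(v)=a_{21}u+a_{22}v$ (both live in degree $1$, and since $\degf\geq 2$ the generator $w$ has degree $\degf-1>1$ so it cannot appear here), while $\phi(w)=aw+h$ with $a\in\kk^\times$ and $h\in\kk_q[u,v]_{\degf-1}$, because $\phi$ is graded and $\deg w=\degf-1$. The linear map $(u,v)\mapsto(\phi(u),\phi(v))$ must be an automorphism of the quantum plane $\kk_q[u,v]$; since $q\neq 1$, the only such automorphisms are the diagonal ones $u\mapsto a_{11}u$, $v\mapsto a_{22}v$ when $q\neq -1$, and additionally the swap $u\leftrightarrow v$ (up to scaling) when $q=-1$. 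This is the step where the hypothesis $q\neq\pm 1$ versus $q=-1$ bifurcates, and it is essentially standard (cf.\ the reference \cite{Giso1} already cited in Lemma~\ref{lem.phi0}).

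Next I would exploit the two remaining relations $wu-quw-v^\degf$ and $wv-q\inv vw-u^\degf$ to constrain $a_{11},a_{22},a,h$. Applying $\phi$ to $wu-quw-v^\degf=0$ and using $\phi(u)=a_{11}u$, $\phi(w)=aw+h$, one computes
\[
\phi(wu-quw-v^\degf)=a a_{11}(wu-quw)+(hu-qa_{11}uh\cdot a_{11}\inv)\cdot(\text{terms})-a_{22}^\degf v^\degf,
\]
but more cleanly: the $w$-linear part forces $a a_{11}(wu-quw)=a a_{11}v^\degf$, the $h$-part forces $h$ to satisfy $hu=quh$ (so that $h\in T_q$ after also checking the $v$-relation gives $hv=q\inv vh$), and matching the coefficient of $v^\degf$ yields $a a_{11}=a_{22}^\degf$. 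Symmetrically, from the other relation, $a a_{22}=a_{11}^\degf$. Writing $a_{11}=a$, $a_{22}=\xi a$ in the notation of \eqref{eq.phi_aut}, these two equations become $a^2=(\xi a)^\degf$ and $a(\xi a)=a^\degf$, which together force $\xi^{\degf+1}=1$ and $a=\xi\inv a^{\degf-1}\cdot(\text{consistency})$; carefully, $\xi^\degf a^{\degf-1}=\xi a$ gives $\xi^{\degf-1}a^{\degf-2}=1$ is not quite it, so one solves the pair directly to get $\xi^{\degf+1}=1$ and the $w$-coefficient equals $\xi\inv a^{\degf-1}$, matching the prescribed form $\phi_{a,\xi,h}$. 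One also needs $h\in T_q$, which follows from the $h$-linear parts of both relations as noted. Thus $\phi=\phi_{a,\xi,h}$, which is the first assertion.

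For the case $q=-1$ with $\degf=2$: if the linear part of $\psi$ is diagonal, the argument above applies verbatim and $\psi=\phi_{a,\xi,h}$. If instead the linear part is the swap, then $\tau\inv\circ\psi$ (equivalently $\psi\circ\tau$, since $\tau$ is an involution) has diagonal linear part, is still a graded automorphism of $B_{-1}(t^2)$, hence equals some $\phi_{a,\xi,h}$ by the first part; therefore $\psi=\phi_{a,\xi,h}\circ\tau$, which after renaming is $\psi=\phi\circ\tau$ as claimed. The main obstacle I anticipate is the bookkeeping in the second paragraph: correctly tracking how $h$ interacts with $u$ and $v$ under $\phi$ and extracting both the condition $h\in T_q$ and the exponent relations simultaneously, since $h$ has degree $\degf-1$ and the cross terms between $aw$ and $h$ in $\phi(w)\phi(u)$ must be disentangled from the pure $v^\degf$ and pure $h$-contributions. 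Everything else is either a direct consequence of Lemma~\ref{lem.phi0}, the classification of quantum-plane automorphisms, or the definition of $T_q$ in \eqref{eq.T}.
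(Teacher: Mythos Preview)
Your overall strategy matches the paper's, but there is a genuine gap at the very first step. You write that ``since $\degf\geq 2$ the generator $w$ has degree $\degf-1>1$ so it cannot appear'' in $\phi(u)$ or $\phi(v)$. This inequality is false when $\degf=2$: then $\deg w=1$, and a priori $\phi(u)$ and $\phi(v)$ can each contain a $w$-term. The case $\degf=2$ is not marginal---it is the quadratic case the proposition is chiefly about, and the $q=-1$ clause explicitly concerns $B_{-1}(t^2)$. The paper handles this by writing $\phi(u)=a_1u+b_1v+c_1w$, $\phi(v)=a_2u+b_2v+c_2w$ and then using the relations: from $\phi(uv-qvu)=0$ the $w^2$-coefficient gives $c_1c_2(1-q)=0$, and then the $w^2$-coefficient in $\phi(wv-q^{-1}vw-u^2)=0$ (resp.\ the symmetric relation) forces the remaining $c_i$ to vanish. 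Only after this do you know that $\phi$ restricts to an automorphism of $\kk_q[u,v]$ and can invoke the classification of those.

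Once that gap is filled, the rest of your argument is the paper's argument, though your bookkeeping is shaky. You use $a$ both for the $w$-coefficient of $\phi(w)$ and, later, for the $u$-scalar in $\phi_{a,\xi,h}$, which leads to the muddled passage where you write ``$\xi^{\degf-1}a^{\degf-2}=1$ is not quite it.'' Cleanly: with $\phi(u)=a_1u$, $\phi(v)=b_2v$, $\phi(w)=c_3w+h$, applying $\phi$ to $wu-quw-v^\degf$ gives $a_1(hu-quh)+(c_3a_1-b_2^\degf)v^\degf=0$; every monomial in $hu-quh$ has positive $u$-degree, so both summands vanish separately, yielding $hu=quh$ and $c_3a_1=b_2^\degf$. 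The symmetric relation gives $hv=q^{-1}vh$ and $c_3b_2=a_1^\degf$. Dividing gives $a_1^{\degf+1}=b_2^{\degf+1}$, so $b_2=\xi a_1$ with $\xi^{\degf+1}=1$, and then $c_3=\xi^{-1}a_1^{\degf-1}$, exactly matching $\phi_{a_1,\xi,h}$.
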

\begin{proof}
Write
\[
	\phi(u) = a_1 u + b_1 v + c_1 w, \quad
	\phi(v) = a_2 u + b_2 v + c_2 w, \quad
	\phi(w) = h + c_3 w
\]
for $a_i,b_i,c_i \in \kk$ and $h \in \kk_q[u,v]$ with $\deg(h)=d-1$. If $\degf > 2$, then $c_1=c_2=0$.

Suppose $\degf=2$. The coefficient of $w^2$ in $\phi_2(uv-qvu)$ is $c_1c_2(1-q)$. Thus, $c_1=0$ or $c_2=0$. If $c_2=0$, then the coefficient of $w^2$ in $\phi_2(wv-q\inv vw-u^2)$ is $c_1^2=0$, so $c_1=0$. A similar argument holds if we assume first that $c_2=0$. Thus, we may assume $c_1=c_2=0$ regardless of $\degf$.

Now we have that $\phi$ restricts to an automorphism of $\kk_q[u,v]$. If $q \neq -1$, then $\phi_1(u) = a_1u$ and $\phi_2(v)=b_2v$ with $a_1,b_2\neq 0$. If $q=-1$, then either $\phi_1$ has the form already given or $\phi_1(u)=b_1v$ and $\phi_1(v)=a_2u$ with $a_2,b_1 \neq 0$. If $q=-1$ and $\phi_1$ is of the second form, then after composing with the automorphism $\tau$ given in \eqref{eq.tau}, we may assume that $\phi_1$ is of the first form. 

Now
\begin{align*}
\phi_2(wu-q uw-v^\degf)
    &= a_1(hu-quh) + (a_1c_3-b_2^\degf)v^\degf \\
\phi_2(wv-q\inv vw-u^\degf)
    &= b_2(hv-q\inv vh) + (b_2c_3-a_1^\degf)u^\degf.
\end{align*}
Hence, either $h=0$ or else $hu=quh$ and $hv=q\inv vh$, so $h \in T_q$. This leaves the conditions $c_3a_1 = b_2^\degf$ and $c_3b_2 = a_1^\degf$. Solving both for $c_3$ shows that $a_1^{\degf+1}=b_2^{\degf+1}$.
\end{proof}

By Lemma \ref{lem.Td}, the map $\phi_{a,\xi,h}$ is diagonal when $q$ is not a root of unity. The next result shows that the maps $\phi_{a,\xi,h}$ can often be diagonalized in the root-of-unity case.

For this result, we define $C_h$ to be the set of $(b,c)\in \NN^2$ such that $u^{bn-1}v^{cn-1}$ is a nonzero summand of $h$. Write 
\begin{align}\label{eq.hform}
    h = \sum_{(b,c) \in C_h} \alpha_{b,c} u^{bn-1} v^{cn-1}
\end{align}
where $\alpha_{b,c} \in \kk^\times$. 

\begin{lemma}\label{lem.diag}
Let $\phi=\phi_{a,\xi,h}$ and write $h$ as in \eqref{eq.hform}. Suppose for all $(b,c)\in C_h$, $\xi^{cn}a^{bn+cn-d-1} \neq 1$. Set 
\[ \beta_{b,c} = 1 - \xi^{cn}a^{bn+cn-d-1}\] 
and set
\[ \widehat{h} = \xi a^{1-d} \sum_{(b,c) \in C_h} \alpha_{b,c}\beta_{b,c}\inv u^{bn-1}v^{cn-1}.\]
Then $\widetilde{w} = w + \widehat{h}$ is an eigenvector for $\phi$.
\end{lemma}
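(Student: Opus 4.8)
The statement to prove is that $\widetilde w = w + \widehat h$ is an eigenvector for $\phi = \phi_{a,\xi,h}$, under the assumption that $\xi^{cn}a^{bn+cn-d-1}\neq 1$ for all $(b,c)\in C_h$. The plan is a direct computation of $\phi(\widetilde w)$, matching it against a scalar multiple of $\widetilde w$. Since $\phi(w) = \xi^{-1}a^{d-1}w + h$ by \eqref{eq.phi_aut}, and $\phi$ acts on each basis monomial $u^{bn-1}v^{cn-1}$ appearing in $\widehat h$ by the scalar $a^{bn-1}(\xi a)^{cn-1} = \xi^{cn-1}a^{bn+cn-2}$, the key is to choose the scalar $\xi^{-1}a^{d-1}$ as the intended eigenvalue (the one attached to $w$) and verify that the $\kk_q[u,v]$-part balances.

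First I would record the action of $\phi$ on a generic summand of $\widehat h$: for $(b,c)\in C_h$,
\[
\phi\bigl(u^{bn-1}v^{cn-1}\bigr) = \xi^{cn-1}a^{bn+cn-2}\, u^{bn-1}v^{cn-1}.
\]
Then I would compute
\[
\phi(\widetilde w) = \phi(w) + \phi(\widehat h)
= \xi^{-1}a^{d-1} w + h + \xi a^{1-d}\sum_{(b,c)\in C_h}\alpha_{b,c}\beta_{b,c}^{-1}\,\xi^{cn-1}a^{bn+cn-2}\,u^{bn-1}v^{cn-1}.
\]
On the other hand, $\xi^{-1}a^{d-1}\widetilde w = \xi^{-1}a^{d-1}w + \xi^{-1}a^{d-1}\widehat h = \xi^{-1}a^{d-1}w + \xi^{-1}a^{d-1}\cdot\xi a^{1-d}\sum \alpha_{b,c}\beta_{b,c}^{-1}u^{bn-1}v^{cn-1} = \xi^{-1}a^{d-1}w + \sum \alpha_{b,c}\beta_{b,c}^{-1}u^{bn-1}v^{cn-1}$. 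So the difference $\phi(\widetilde w) - \xi^{-1}a^{d-1}\widetilde w$ equals
\[
h + \sum_{(b,c)\in C_h}\alpha_{b,c}\beta_{b,c}^{-1}\bigl(\xi^{cn}a^{bn+cn-d-1} - 1\bigr)u^{bn-1}v^{cn-1}.
\]
Using $h = \sum \alpha_{b,c}u^{bn-1}v^{cn-1}$ and $\beta_{b,c}^{-1}(\xi^{cn}a^{bn+cn-d-1}-1) = \beta_{b,c}^{-1}(-\beta_{b,c}) = -1$, this collapses to $h - h = 0$. Hence $\phi(\widetilde w) = \xi^{-1}a^{d-1}\widetilde w$, so $\widetilde w$ is an eigenvector with eigenvalue $\xi^{-1}a^{d-1}$.

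There is essentially no obstacle here; the only point requiring the hypothesis is that $\beta_{b,c} = 1 - \xi^{cn}a^{bn+cn-d-1}$ must be nonzero so that $\beta_{b,c}^{-1}$ (hence $\widehat h$) is defined, which is exactly the stated assumption. I would just make sure to state the eigenvalue explicitly and to note that $\widetilde w$ together with $u,v$ still generates $B_q(t^d)$, so this genuinely diagonalizes $\phi$ after the change of variables $w\mapsto\widetilde w$ (combined with the diagonal action on $u,v$ already given by $\phi_{a,\xi,h}$).
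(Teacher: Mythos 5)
Your proof is correct and follows essentially the same route as the paper: a direct computation of $\phi(\widetilde w)$ using the diagonal action on the monomials $u^{bn-1}v^{cn-1}$ and the identity $\beta_{b,c}^{-1}\bigl(\xi^{cn}a^{bn+cn-d-1}-1\bigr)=-1$, yielding the eigenvalue $\xi^{-1}a^{d-1}$. The only cosmetic difference is that the paper reduces to the case of a single monomial $h=u^{bn-1}v^{cn-1}$ and factors out the eigenvalue, whereas you carry the full sum and show the difference $\phi(\widetilde w)-\xi^{-1}a^{d-1}\widetilde w$ vanishes.
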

\begin{proof}
We prove this for $h=u^{bn-1}v^{cn-1}$. The general case is entirely analogous. We have
\begin{align*}
\phi(\widetilde{w}) 
    &= \phi(w+\xi a^{1-d} \beta_{b,c}\inv h) \\
    &= (\xi\inv a^{\degf-1} w + h) + \xi a^{1-d} \beta_{b,c}\inv a^{(bn-1)+(cn-1)} \xi^{cn-1}
    h \\
    &= \xi\inv a^{\degf-1} \left( w +  (\xi a^{1-d} + \xi a^{1-d} \beta_{b,c}\inv a^{bn+cn-1-d} \xi^{cn} )
    h \right) \\
    &= \xi\inv a^{\degf-1} \left( w +  \xi a^{1-d}\beta_{b,c}\inv h \right),
\end{align*}
as claimed.
\end{proof}

The relations for the generating set $\{u,v,\widetilde{w}\}$ of $B_q(t^d)$ are identical to those in \eqref{eq.Bq}. In particular,
\[ \widetilde{w} u - qu\widetilde{w} - v^d 
    = \widetilde{w} v - q\inv v\widetilde{w} - u^d 
    = 0.\]

\begin{proposition}\label{prop.diag}
Let $\phi=\phi_{a,\xi,h}$ and write $h$ as in \eqref{eq.hform} above. Then $\phi$ has finite order if and only if $\ord(a)<\infty$ and $\xi^{cn}a^{bn+cn-d-1} \neq 1$ for all $(b,c)\in C_h$.
\end{proposition}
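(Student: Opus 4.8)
The plan is to reduce the finite-order question to the diagonalizable case handled by Lemma~\ref{lem.diag} and Proposition~\ref{prop.graut}, and then analyze powers of a diagonal map. First I would prove the forward implication: assuming $\phi = \phi_{a,\xi,h}$ has finite order $N$, I compute $\phi^N$ using the composition formula $\phi_{a_2,\xi_2,h_2}\circ\phi_{a_1,\xi_1,h_1} = \phi_{a_1a_2,\xi_1\xi_2,h_3}$. Iterating gives $\phi^N = \phi_{a^N,\xi^N,h^{(N)}}$, so $\phi^N = \id$ forces $a^N = 1$ (hence $\ord(a)<\infty$) and $\xi^N = 1$ (automatic, since $\xi$ is already a $(d+1)$-root of unity) and $h^{(N)} = 0$. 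The condition $h^{(N)}=0$ is where the eigenvalue constraint enters: if some $(b,c)\in C_h$ had $\xi^{cn}a^{bn+cn-d-1} = 1$, then $\widetilde w$ cannot be adjusted to be an eigenvector, and in fact the accumulated coefficient of $u^{bn-1}v^{cn-1}$ in $h^{(N)}$ is a nonzero multiple of $N\alpha_{b,c}$ (a ``resonance''), contradicting $h^{(N)}=0$. So I would track that particular summand through the iterated composition and show it cannot vanish.

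For the reverse implication, suppose $\ord(a) = m < \infty$ and $\xi^{cn}a^{bn+cn-d-1}\neq 1$ for all $(b,c)\in C_h$. Then Lemma~\ref{lem.diag} applies: there is a new generator $\widetilde w = w + \widehat h$ that is an eigenvector of $\phi$, say $\phi(\widetilde w) = \xi\inv a^{d-1}\widetilde w$. With respect to the generating set $\{u,v,\widetilde w\}$ the map $\phi$ is diagonal with eigenvalues $a$, $\xi a$, $\xi\inv a^{d-1}$, all of which are roots of unity (using $\ord(a)<\infty$, $\xi^{d+1}=1$). A diagonal map with all eigenvalues roots of unity has finite order — its order is the lcm of the orders of the three eigenvalues. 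Since $\{u,v,\widetilde w\}$ generates $B_q(t^d)$ and the relations in these new coordinates are again of the form \eqref{eq.Bq}, $\phi$ itself has finite order.

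The main obstacle I anticipate is the bookkeeping in the forward direction: showing precisely that the coefficient of a ``resonant'' monomial $u^{bn-1}v^{cn-1}$ in $h^{(N)}$ is $N\alpha_{b,c}$ up to a unit. One clean way is to note that $\phi$ acts on the finite-dimensional space $V = \mathrm{span}_\kk\{\widetilde w\text{-free part}\} = \kk\widetilde w + T_q$ (more precisely, on $\kk w \oplus \kk_q[u,v]_{d-1}$, which is $\phi$-stable modulo lower terms) as a single Jordan block of size $2$ exactly when resonance occurs — the eigenvalue of $w$ coincides with the eigenvalue of that monomial in $T_q$, and $h$ supplies the off-diagonal entry. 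A nontrivial Jordan block never has finite order in characteristic zero, giving the contradiction without explicit iteration. I would phrase the argument this way to avoid grinding through the $N$-fold composition, invoking only that $\phi$ restricted to this two-dimensional subspace is unipotent-times-semisimple with a genuine unipotent part precisely under the resonance condition.
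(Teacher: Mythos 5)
Your proposal is correct and takes essentially the same route as the paper: the reverse implication is exactly the paper's appeal to Lemma~\ref{lem.diag} to diagonalize $\phi$ on $\{u,v,\widetilde w\}$, and your ``resonance'' observation in the forward direction is precisely the paper's computation $\phi^k(w)=\gamma^k w+k\gamma^{k-1}h$ (a nontrivial Jordan block on the $\phi$-stable degree-$(d-1)$ component), which cannot have finite order in characteristic zero. The Jordan-block phrasing is just a clean repackaging of the paper's explicit induction, and both arguments reduce to a single resonant monomial in the same way.
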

\begin{proof}
Suppose $\ord(a)<\infty$ and $\xi^{cn}a^{bn+cn-d-1} \neq 1$ for all $(b,c)\in\supp(h)$. 
Let $\{u,v,\widetilde{w}\}$ be as in Lemma \ref{lem.diag}. Then
$\phi$ acts diagonally on this new generating set:
\[
\phi(u) = au, \qquad
\phi(v) = \xi a v, \qquad
\phi(\widetilde{w}) = \xi\inv a^{d-1} \widetilde{w}.
\]
Consequently, $\ord(\phi)=\lcm(\ord(\xi),\ord(a))<\infty$.

Now suppose that there exists $(b,c) \in \supp(h)$ such that $\xi^{cn}a^{bn+cn-d-1} = 1$, thus $a^{bn+cn-d-1}=\xi^{-cn}$, and since $\xi$ is a $(d+1)$-root of unity, then $(a^{bn+cn-d-1})^{d+1}=1$. Hence, $a\in\kk$ is a root of unity.

Again, it will suffice to consider $h=u^{bn-1}v^{cn-1}$. Set $\gamma=\xi\inv a^{d-1}$. Then we have
\[ \phi(h) = \xi^{cn-1}a^{(bn-1)+(cn-1)}h = \gamma h.\]
We have $\phi(w)=\gamma w+h$ and an induction shows that for any $k \geq 1$,
\[ \phi^k(w) = \gamma^k w + k\gamma h.\]
This proves the claim.
\end{proof}

For the remainder, we assume $h=0$ and write the maps \eqref{eq.phi_aut} by $\phi_{a,\xi}$. We now consider the invariant theory of the $B_q(t^d)$ under cyclic groups generated by the $\phi_{a,\xi}$. First we review some background. For further reading the reader is directed to the survey article of Kirkman \cite{Ki}.

Let $A=\bigoplus_{k \in \NN} A_k$ be an $\NN$-graded algebra and $\psi$ a graded automorphism of $A$. The \emph{trace series} of $\psi$ is
\[ \Tr(\psi,t) = \sum_{k=0}^\infty \tr(\psi\restrict{A_k})t^k\]
where $\tr(\psi\restrict{A_k})$ denotes the usual trace of $\psi$ as a linear map acting on $A_k$. Now if $G$ is a finite group of graded automorphisms of $A$, then Molien's Theorem says that
\[ H_{A^G} = \frac{1}{|G|} \sum_{g \in G} \Tr_A(g,t).\]
The \emph{homological determinant} is a group homomorphism $\hdet:\Aut(A) \to \kk^\times$ defined originally in terms of local cohomology \cite{gourmet}. In the where $A$ is a polynomial ring, then $\hdet=\det$. We do not give a formal definition of $\hdet$ here but use a particular trick to compute it for the $\phi_{a,\xi}$.

\begin{lemma}\label{lem.graut_inv}
Fix $q \in \kk^\times$ and $d \geq 2$. Set $B=B_q(t^d)$. Let $a \in \kk^\times$ and $\xi$ be a $(\degf+1)$-root of unity. Set $\phi=\phi_{a,\xi}$.Then
\begin{enumerate}
\item $\ord(\phi)=\lcm(\ord(a),\ord(\xi))$,
\item $\displaystyle\Tr_B(\phi,t) = (1-at)\inv(1-\xi a t)\inv(1-\xi\inv a^{d-1}t^{d-1})\inv$,  and
\item $\hdet(\phi) = a^{d+1}$.
\end{enumerate}
\end{lemma}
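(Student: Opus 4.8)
The plan is to establish the three claims in order, each reducing to a short computation once the diagonalized generating set is in hand. For (1), observe that $\phi_{a,\xi}$ acts diagonally on the generating space $\kk u \oplus \kk v \oplus \kk w$ of $B$ (this is the $h=0$ case of Proposition~\ref{prop.diag}), with eigenvalues $a$, $\xi a$, and $\xi\inv a^{\degf-1}$. Hence $\phi^k = \id$ forces $a^k = 1$, $(\xi a)^k = 1$, and $(\xi\inv a^{\degf-1})^k = 1$; the first two give $a^k = \xi^k = 1$, and these in turn imply the third. So $\ord(\phi)$ is the least common multiple of $\ord(a)$ and $\ord(\xi)$, as claimed.

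For (2), I would use the $\kk$-basis $\{v^i u^j w^k : i,j,k \in \NN\}$ of $B$ coming from the Ore extension presentation (noted after Lemma~\ref{lem.ore}). Since $\phi_{a,\xi}$ scales $u \mapsto au$, $v \mapsto \xi a v$, $w \mapsto \xi\inv a^{\degf-1} w$, each basis monomial $v^i u^j w^k$ is an eigenvector with eigenvalue $(\xi a)^i a^j (\xi\inv a^{\degf-1})^k$, and the monomial lies in graded degree $i + j + (\degf-1)k$ under the weighting $\deg u = \deg v = 1$, $\deg w = \degf - 1$. Therefore
\[
\Tr_B(\phi,t) = \sum_{i,j,k \geq 0} (\xi a)^i a^j (\xi\inv a^{\degf-1})^k\, t^{\,i+j+(\degf-1)k} = \frac{1}{(1-\xi a t)(1-at)(1-\xi\inv a^{\degf-1} t^{\degf-1})},
\]
which is the asserted formula. (One should note this geometric-series manipulation is the formal identity of rational functions / formal power series; convergence is not an issue since we work with trace series as formal series.)

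For (3), the homological determinant is a group homomorphism $\Autgr(B) \to \kk^\times$, and by Lemma~\ref{lem.asreg} the algebra $B = B_q(t^\degf)$ is AS regular of dimension three with Hilbert series $H_B(t) = (1-t)^{-2}(1-t^\degf)\inv$. The standard fact (see \cite{gourmet}, or the reformulation in \cite{Ki}) is that the homological determinant is detected by the trace series: $\Tr_B(\phi,t)$ and $H_B(t)$ agree up to the substitution that replaces $\phi$ by $\id$, and $\hdet(\phi)$ equals $(-1)^{\dim}$ times the leading coefficient, or more precisely can be read off from the relation $\Tr_B(\phi,t) \sim \pm\, \hdet(\phi)\inv\, t^{-\ell}$ as $t \to \infty$, where $\ell$ is the AS index. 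Concretely, taking the degree of $\Tr_B(\phi, t^{-1})$ expanded around $t = \infty$: from part (2), $\Tr_B(\phi,t)$ has a pole of total order $1 + 1 + (\degf-1) = \degf+1$ at $t = \infty$ with leading behavior governed by $\bigl(a \cdot \xi a \cdot \xi\inv a^{\degf-1}\bigr)\inv = a^{-(\degf+1)}$, so $\hdet(\phi) = a^{\degf+1}$. I expect the main obstacle to be stating this last extraction cleanly: one must invoke precisely the right form of the ``trace formula for $\hdet$'' for AS regular (not necessarily polynomial) algebras, and match the normalization so that the answer comes out as $a^{\degf+1}$ rather than its inverse. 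An alternative, perhaps cleaner route is to use multiplicativity of $\hdet$ together with the observation that $\phi_{a,\xi}$ factors through the diagonal torus action with weights $(1,1,\degf-1)$ on $(u,v,\tilde w)$; comparing with the known $\hdet$ of such weighted scalings (e.g.\ via the computation in \cite{steph2} or a direct Koszul-complex argument using the resolution displayed before Theorem~\ref{thm.cy}) also yields $a^{\degf+1}$ after tracking the twist by $\xi$, which cancels since $\xi^{\degf+1} = 1$.
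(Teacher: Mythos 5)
Your proposal is correct and follows essentially the same route as the paper: parts (1) and (2) by diagonality of $\phi_{a,\xi}$ on the PBW basis, and part (3) by reading $\hdet$ off the leading term of the Laurent expansion of $\Tr_B(\phi,t)$ in $t\inv$ via \cite[Lemma 2.6]{gourmet}, which gives $(-1)^3(a^{d+1})\inv t^{-(d+1)}+\cdots$ and hence $\hdet(\phi)=a^{d+1}$. (Your alternative superpotential/Koszul-dual route is also the one the paper records in Remark \ref{rmk.inv} for $d=2$.)
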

\begin{proof}
(1) and (2) are clear because $\phi$ acts diagonally on an appropriate basis of $B$.

By (2), we have that $\Tr_B(\phi,t) = p(t)\inv$ where
\[  p(t) =
    1 - (1+\xi) at + \xi a^2 t^2 - \xi\inv a^{d-1} t^{d-1} + (\xi\inv + 1)a^d t^d - a^{d+1}t^{d+1}.
\]
Hence, as a Laurent series in $t\inv$,
\[
\Tr_B(\phi,t) = (-1) (a^{d+1})\inv t^{-(d+1)} + \text{(lower terms)}.
\]
Thus, by \cite[Lemma 2.6]{gourmet}, $\hdet(\phi) = a^{d+1}$.
\end{proof}

Suppose $A$ is AS regular with GK dimension $n$. A graded automorphism $g$ is a \emph{reflection} of $A$ if
\[ \Tr(g,t) = \frac{1}{(1-t)^{n-1} q(t)}\]
where $q(1) \neq 0$. Suppose further that $A$ is noetherian and $G$ a finite subgroup of graded automorphisms of $A$. By \cite[Theorem 2.4]{KKZ5}, if $A^G$ has finite global dimension, then $G$ contains a nontrivial reflection of $A$.

\begin{theorem}\label{thm.gorenstein}
Fix $q \neq \pm 1$ and $d \geq 2$. Set $B=B_q(t^d)$. 
\begin{enumerate}
    \item If $H$ is a nontrivial finite subgroup of $\Autgr(B)$, then $B^H$ is not AS regular.
    \item If $H$ is a finite subgroup of $\{ \phi_{a,\xi,h} : a^{d+1}=1\}$, 
    then $B^H$ is AS Gorenstein.
\end{enumerate}
\end{theorem}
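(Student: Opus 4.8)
The plan is to prove both parts using the graded automorphism classification (Proposition \ref{prop.graut}), the trace/homological-determinant computation (Lemma \ref{lem.graut_inv}), and the reflection obstruction from \cite[Theorem 2.4]{KKZ5}. Since $q \neq \pm 1$, every element of $\Autgr(B)$ is of the form $\phi_{a,\xi,h}$, and by Proposition \ref{prop.diag} a finite-order such automorphism can, after a change of the generating variable $w \mapsto \widetilde w = w + \widehat h$, be taken to act diagonally as $\phi(u)=au$, $\phi(v)=\xi a v$, $\phi(\widetilde w) = \xi^{-1} a^{d-1}\widetilde w$. So the first reduction is: a finite subgroup $H \subseteq \Autgr(B)$ can be simultaneously diagonalized on some generating set $\{u,v,\widetilde w\}$ only if the various $\widehat h$-corrections are compatible; more carefully, one argues directly with trace series, which is basis-independent, so it suffices to compute $\Tr_B(g,t)$ for each $g = \phi_{a,\xi,h} \in H$ using Lemma \ref{lem.graut_inv}(2) (the $h$ does not affect the trace, since $h$ contributes strictly off-diagonal terms in the monomial basis $\{v^i u^j \widetilde w^k\}$ adapted to each individual $g$).

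For part (1), I would suppose toward a contradiction that $B^H$ is AS regular, hence in particular has finite global dimension, and apply \cite[Theorem 2.4]{KKZ5}: $H$ must contain a nontrivial reflection $g$ of $B$. Recall $B$ has GK dimension $n=3$, so a reflection must satisfy $\Tr(g,t) = \big((1-t)^2 q(t)\big)^{-1}$ with $q(1)\neq 0$. By Lemma \ref{lem.graut_inv}(2), $\Tr_B(g,t) = (1-at)^{-1}(1-\xi a t)^{-1}(1-\xi^{-1}a^{d-1}t^{d-1})^{-1}$ for the corresponding $(a,\xi)$. The key observation is that since $d \geq 2$, the factor $(1-\xi^{-1}a^{d-1}t^{d-1})$ has all its roots strictly off the "double root at $1$" pattern unless $d-1 = 1$; and even the two linear factors force $a = 1$ and $\xi a = 1$ to produce the double pole at $t=1$. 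I would analyze the cases: to have the $(1-t)^2$ factor we need two of the three factors in the denominator to vanish at $t=1$; the pair $(1-at), (1-\xi at)$ gives $a = 1, \xi = 1$, forcing $g$ to be the identity on $u,v,\widetilde w$ hence (tracking $h$) either trivial or of the excluded form — here one must rule out $h \neq 0$, but $h\neq 0$ with $a=\xi=1$ gives $\phi(w) = w + h$ of infinite order (Proposition \ref{prop.diag}), contradicting finiteness of $H$. Pairing $(1-at)$ with $(1-\xi^{-1}a^{d-1}t^{d-1})$ requires $a=1$ and then $\xi^{-1}t^{d-1}$ with $d\geq 2$: for $d=2$ this is $1-\xi^{-1}t$ so needs $\xi=1$ again, landing in the trivial case; for $d\geq 3$ the factor $1-t^{d-1}$ does not have a double root at $1$, so no reflection exists. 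The analogous pairing with the second and third factors is symmetric. Thus no nontrivial reflection lies in $H$, a contradiction, so $B^H$ is not AS regular.

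For part (2), the strategy is to verify the AS Gorenstein condition via the hypersurface/Molien computation. With $H \subseteq \{\phi_{a,\xi,h} : a^{d+1}=1\}$ finite, every $g \in H$ has $\hdet(g) = a^{d+1} = 1$ by Lemma \ref{lem.graut_inv}(3), i.e., $H \subseteq \ker(\hdet)$ acts with trivial homological determinant. Since $B$ is noetherian AS regular (Lemma \ref{lem.asreg}) and hence AS Gorenstein, the standard invariant-theory result — due to Jørgensen–Zhang / Kirkman–Kuzmanovich–Zhang, that if a finite group $G$ of graded automorphisms of an AS Gorenstein noetherian connected graded algebra has $\hdet|_G \equiv 1$ then $A^G$ is AS Gorenstein — applies directly to give that $B^H$ is AS Gorenstein. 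I would cite the appropriate statement (e.g. \cite[Theorem 3.3]{JoZh} or the version in the Kirkman survey \cite{Ki}) and note the hypotheses are met: $B$ is connected $\NN$-graded (here $d \geq 2$), noetherian, AS Gorenstein, and $H$ is finite with trivial homological determinant.

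The main obstacle I anticipate is part (1): specifically, the careful case analysis of exactly which $(a,\xi,d)$ can make $\Tr_B(\phi_{a,\xi,h},t)$ have the reflection shape $\big((1-t)^2 q(t)\big)^{-1}$, and in particular handling $d=2$ versus $d\geq 3$ uniformly, together with correctly ruling out the $h\neq 0$ degenerate possibilities by invoking the infinite-order conclusion of Proposition \ref{prop.diag}. A secondary subtlety is making sure the trace-series computation of Lemma \ref{lem.graut_inv} is legitimately applied to a general $\phi_{a,\xi,h}$ with $h \neq 0$ (it is stated there for $h=0$): one resolves this by the change of variables $w \mapsto \widetilde w$ which exists precisely when $\phi$ has finite order, so it is available for every element of the finite group $H$.
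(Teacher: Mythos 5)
Your proposal is correct and follows essentially the same route as the paper: part (1) via the reflection criterion of \cite[Theorem 2.4]{KKZ5} together with the trace series of Lemma \ref{lem.graut_inv} (after diagonalizing finite-order automorphisms via Proposition \ref{prop.diag} so that $h$ may be taken to be zero), and part (2) via $\hdet(\phi_{a,\xi,h})=a^{d+1}=1$ and the J{\o}rgensen--Zhang Gorenstein-descent theorem. The only difference is cosmetic: you organize the case analysis by which pair of factors of $p(t)=(1-at)(1-\xi a t)(1-\xi\inv a^{d-1}t^{d-1})$ vanishes at $t=1$, while the paper splits on $a=1$ versus $a\neq 1$; both reach the same conclusion that $H$ contains no nontrivial reflection.
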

\begin{proof}
(1) Let $\phi \in H$ be a reflection. By Propositions \ref{prop.graut} and \ref{prop.diag}, $\phi=\phi_{a,\xi}$ for some choice of parameters. Write
\[ p(t) = (1-at)(1-\xi a t)(1-\xi\inv a^{d-1}t^{d-1})\]
so that $\Tr(\phi,t)=p(t)\inv$.

Since $\phi$ is a reflection, then $(1-t)^2 \mid p(t)$. So, $1$ is a root of at least one of the factors.

If $1$ is a root of $(1-at)$, then $a=1$. But then $(1-t) \mid (1-\xi\inv t)(1-\xi\inv t^{d-1})$. Since $(1-t)$ divides one of the factors, this forces $\xi=1$, so $\phi=\id_B$.

Now suppose $a \neq 1$. So then $(1-t)$ divides $(1-\xi at)$ and $(1-\xi\inv a^{d-1} t^{d-1})$. Hence, $\xi a=1$ and $\xi\inv a^{d-1}=1$ or, equivalently, $\xi\inv = a$ and $a^d=1$. This implies that $a$ is a $d$-root of unity, but $\xi$ is a $(d+1)$-root of unity, so $a=\xi=1$.

We conclude that $H$ contains no nontrivial reflections. Since $B$ is noetherian, then the result follows from \cite[Theorem 2.4]{KKZ5}.

(2) By Lemma \ref{lem.graut_inv}, $\hdet(\phi)=1$ for all $\phi \in H$. Now $B^H$ is Gorenstein by \cite[Theorem 3.3]{gourmet}.
\end{proof}

\begin{remark}\label{rmk.inv}
In the case of $d=2$ (i.e., the quadratic case), there is an alternate way to approach (2) and (3) in Lemma \ref{lem.graut_inv}. Set $B=B_q(t^2)$ and $\phi=\phi_{a,\xi}$.

For (2), we first compute the Koszul dual of $B$ using standard techniques. Let $\ox,\oy,\oz$ be a dual basis for $B$. Then
\[
B^! = \kk\langle u,v,w \rangle/(\ou\ov+q\inv \ov\ou, \ow\ou+q\inv \ou\ow, \ow\ov+q\ov\ow, \ow\ou+\ov^2, \ow\ov+\ou^2, \ow^2 ).
\]
Now a $\kk$-basis for $B^!$ is
$\{ 1, \ou, \ov, \ow, \ou^2, \ou\ov, \ov^2, \ou^3\}$.
The graded automorphism $\phi$ of $B$ induces a graded automorphism $\phi^T$ on $B^!$. Since $\phi$ is diagonal, then $\phi(\ou)= a\ou$, $\phi(\ov)=\xi a \ov$, and $\phi(\ow)= \xi^2 a \ow$.
Hence,
\[ \Tr_{A^!}(\phi^T,t) = 1 + (a + \xi a + \xi^2 a) t
	+ (a^2 + \xi a^2 + \xi^2 a^2) t^2 + a^3 t^3
		= 1 + a^3 t^3.
\]
Then by \cite[Corollary 4.4]{Jiz},
\[ \Tr_{A}(\phi,t) = \Tr_{A^!}(\phi^T,-t)\inv = (1-a^3 t^3)\inv.\]

For (3), let $\bw$ be the superpotential corresponding to $B$ from \eqref{eq.potential}. By \cite[Theorem 1.2]{MS}, $\phi(\bw)=(\hdet(\phi_{a,\xi}))\bw = a^3 \bq$.
\end{remark}

\section{Further questions and comments}
\label{sec.questions}

We end with some additional questions based on the above discussion. We hope that this work encourages further study of algebras with trivial ozone groups. In \cite{CGWZ3}, the two trivial ozone families were the $B_q(t^2)$ for $q$ a root of unity and (most) PI Sklyanin algebras. Just as the $B_q(f)$ generalize the first family, the following algebras generalize the Sklyanin algebras.

\begin{definition}
For generic $a,b \in \kk$ and $f \in \kk[t]$, define
\[ 
S_{a,b}(f) 
    = \kk\langle u,v,w \mid
        auv+bvu+f(w), avw+bwv+f(u), awu+buw+f(v)
    \rangle.
\]
\end{definition}

One recovers the classical Skylanin algebras when $f=t^2$, as well as various PBW deformation when $\deg f \leq 2$. We expect that similar results hold here but arguments will require a more geometric approach.

\begin{question}
For what parameters $a,b,f$ does $S_{a,b}(f)$ have trivial ozone group?
\end{question}

In Section \ref{sec.center} we gave partial information on the center of the $B_q(f)$ at roots of unity $q$. In general this problem seems rather subtle.

\begin{question}
Let $q$ be a nontrivial root of unity. For general $f$, what is the center of $B_q(f)$?
\end{question}

In Section \ref{sec.invariant} we studied the invariant theory of the automorphisms $\phi_{a,\xi,h}$. In the case that $q=\pm 1$, the automorphism group contains reflections. In particular, one can show that $\tau$ itself is a reflection and $B_{\pm 1}(t^d)^{\langle \tau \rangle}$ is AS regular for $d \geq 2$. In general, there are additional automorphisms $\psi_{a,\xi,h} = \phi_{a,\xi,h} \circ \tau$ given by
\[
\psi_{a,\xi,h}(u) = \xi a v, \quad
\psi_{a,\xi,h}(v) = a u, \quad
\psi_{a,\xi,h}(w) = \xi\inv a^{d-1} w + h,
\]
where $a \in \kk^\times$, $\xi$ a $(d+1)$-root of unity, and $h \in T_q$. A similar strategy as that employed in Proposition \ref{prop.graut} applies here and one can diagonalize these maps. However, analyzing the possible invariant rings is significantly more challenging.

\begin{question}
Under what conditions is $B_{\pm 1}(t^d)^G$ AS regular?
\end{question}

Let $A=\kk[x_1,x_2,x_3]$ and $f \in \kk[t]$. Define a Poisson bracket on $A$ by
\[ 
\{ u,v \} = uv, \qquad
\{ w,u \} = uw + f(v), \qquad
\{ v,w \} = vw + f(u).
\]
Denote these Poisson algebras by $A(f)$. The $A(f)$ are unimodular in general and $A(t^2)$ is the semiclassical limit of the $B_q(t^2)$.

In \cite{CGWZ4}, the authors studied ozone groups of Poisson algebras in characteristic $p>0$. As these are Poisson algebras, ozone automorphisms are replaced by certain Poisson derivations (called \emph{log-ozone derivations}). By \cite[Theorem 4.7]{CGWZ4}, the log-ozone group of $A(t^2)$ is trivial.

\begin{question}
For general $f \in \kk[t]$, is the log-ozone group of $A(f)$ trivial?
\end{question}

\end{document}